\newtheorem{definition}{Definition}
\newtheorem{theorem}{Theorem}
\newtheorem{lemma}{Lemma}
\newtheorem{proportion}{Proportion}
\newtheorem{corollary}{Corollary}
\newtheorem{assumption}{Assumption}
\newtheorem{algorithm}{Algorithm}
\title{A Dual Method for Minimax Quadratic Programming
\author{Wenhui Ren\thanks{\baselineskip 9pt School of Mathematical Sciences, Dalian University of Technology, Dalian 116024, China. E-mail: renwhmath@mail.dlut.edu.cn} \
 and \ Liwei Zhang\thanks{\baselineskip 9pt
		School of Mathematical Sciences, Dalian University of Technology, Dalian 116024, China. E-mail: lwzhang@dlut.edu.cn. The
		research of this author was supported by the National Key R\&D Program of China under project No. 2022YFA1004000 and the National Natural Science Foundation of China under project  No. 12371298.}}
}
\begin{document}
	\maketitle
	\begin{abstract}
		This paper investigates minimax quadratic programming problems with coupled inequality constraints. By leveraging a duality theorem, we develop a dual algorithm that extends the dual active set method to the minimax setting, transforming the original inequality constrained problem into a sequence of equality constrained subproblems. 
		Under a suitable assumption, we prove that the associated S-pairs do not repeat and that the algorithm terminates in a finite number of iterations, guaranteed by the monotonic decrease of the objective function value. To ensure numerical stability and efficiency, the algorithm is implemented using Cholesky factorization and Givens rotations. Numerical experiments on both randomly generated minimax quadratic programs and illustrative applications demonstrate the accuracy, stability, and computational effectiveness of the proposed algorithm.
	\end{abstract}
	
	{\bf Keywords:} minimax optimization, coupled constraints,
	quadratic programming, dual active set method.\\
	
	{\bf Mathematics Subject Classification.} 90C20, 90C47, 65K05\\

	\baselineskip 18pt
	\parskip 2pt
	
	\section{Introduction}
	Consider the following minimax quadratic optimization problem:
	\begin{align}\label{a1.1}
		\left\lbrace
		\begin{aligned}
			\min_{x\in\mathbb{R}^{n_{x}}}\max_{y\in\mathbb{R}^{n_{y}}}\quad& f(x, y) = \frac{1}{2}x^{T}G_{11}x + x^{T}G_{12}y + \frac{1}{2}y^{T}G_{22}y + c_{x}^{T}x + c_{y}^{T}y,\\
			{\rm s.t.}\quad & s(x, y) = Ax + By + h \leq 0.
		\end{aligned}\right.
	\end{align}
	where $G_{11} \in \mathbb{R}^{n_{x} \times n_{x}}$ is a symmetric matrix, $G_{22} \in \mathbb{R}^{n_{y} \times n_{y}}$ is a symmetric negative definite matrix, $G_{12} \in \mathbb{R}^{n_{x} \times n_{y}}$, $c_{x} \in \mathbb{R}^{n_{x}}$, $c_{y} \in \mathbb{R}^{n_{y}}$, $A \in \mathbb{R}^{m \times n_{x}}$, $B \in \mathbb{R}^{m \times n_{y}}$ and $h \in \mathbb{R}^{m}$. Although the vectors of variables $x$ and $y$ may also be subject to equality constraints
	\begin{align*}
		l(x, y) = A^{l}x + B^{l}y + h^{l} = 0,
	\end{align*}
	we will ignore such constraints for the moment in order to simplify our presentation.
	
	Minimax optimization problems arise in a broad range of fields, from modern machine learning such as generative adversarial networks, adversarial training, and multi-agent reinforcement learning to classical areas including saddle point problems, numerical partial differential equations, and game theory. Extensive studies have focused on unconstrained minimax problems and those without coupling constraints between the inner variable and the outer variable~\cite{nesterovDualExtrapolationIts2007, linGradientDescentAscent2020, mokhtariConvergenceRate$mathcalO12020, zhangPrimalDualFirstOrderMethods2023}. More recently, several works have investigated minimax optimization problems with coupled equality constraints~\cite{daiOptimalityConditionsNumerical2024, daiRateConvergenceAugmented2024}. For general nonlinear programming, the sequential quadratic programming (SQP) method represents a cornerstone, providing an efficient and reliable approach for solving constrained problems~\cite{boggsSequentialQuadraticProgramming1995, gillSequentialQuadraticProgramming2012}. We conjecture that extending the SQP method to minimax problems requires solving a quadratic subproblem at each iteration to obtain the search direction. Consequently, the development of an efficient algorithm for minimax quadratic programming with inequality constraints is of particular interest.
	
	Quadratic programming (QP) has a long history and numerous applications, giving rise to a variety of algorithmic methods. Representative approaches include the primal methods of Beale~\cite{bealeMinimizingConvexFunction1955}, Wolfe~\cite{wolfeSimplexMethodQuadratic1959}, and Fletcher~\cite{fletcherGeneralQuadraticProgramming1971}; the dual methods of Lemke~\cite{lemkeMethodSolutionQuadratic1962}, Van de Panne and Whinston~\cite{vandepanneSimplexDualMethod1964}, and Goldfarb and Idnani~\cite{goldfarbNumericallyStableDual1983}; the pivoting method of Keller~\cite{kellerGeneralQuadraticOptimization1973}; and the iterative schemes of Hildreth~\cite{hildrethQuadraticProgrammingProcedure1957}, Frank and Wolfe~\cite{frankAlgorithmQuadraticProgramming1956}, Pang~\cite{pangHybridMethodSolution1981}, Herman and Lent~\cite{hermanFamilyIterativeQuadratic1978}, and Mangasarian~\cite{mangasarianSolutionSymmetricLinear1977}. Among these, the Goldfarb and Idnani method~\cite{goldfarbNumericallyStableDual1983} is one of the most influential dual active set algorithms for strictly convex QPs. Powell~\cite{powellQuadraticProgrammingAlgorithm1985} implemented this method and demonstrated its robustness and efficiency through extensive numerical experiments. Later, Boland~\cite{bolandDualactivesetAlgorithmPositive1996} extended the approach to positive semi-definite problems. Motivated by these developments, we aim to propose a dual active set method to minimax quadratic programming with coupled inequality constraints, following the key ideas of the Goldfarb and Idnani method.
	
	The remainder of this paper is organized as follows.
	In Section~\ref{section2}, we recall the notion of a local minimax point for Problem~\eqref{a1.1} and the associated optimality conditions proposed by Dai and Zhang~\cite{daiOptimalityConditionsConstrained2020}.
	In Section~\ref{section3}, we introduce the basic framework for solving minimax quadratic programs with inequality constraints, inspired by the Goldfarb and Idnani method.
	In Section~\ref{section4}, we develop a dual algorithm for minimax quadratic programming and establish, under a mild assumption, that the algorithm terminates finitely by demonstrating monotonic descent of the objective value.
	In Section~\ref{section5}, we employ Cholesky decomposition and Givens rotations to obtain an efficient and numerically stable implementation.
	In Section~\ref{section6}, we present numerical experiments to demonstrate the performance of the proposed algorithm, including randomly generated minimax quadratic programs, an adversarial attack on a mean-covariance portfolio model, and illustrative examples highlighting the algorithm's internal iterations.
	Finally, Section~\ref{section7} concludes this paper.
	
	\section{Optimality conditions for constrained minimax problem}\label{section2}
	Considering the following problem:
	\begin{align}\label{a2.1}
		\left\lbrace\begin{aligned}
			\min_{x\in\mathbb{R}^{n_{x}}}\max_{y\in\mathbb{R}^{n_{y}}}\quad& f(x,y),\\
			{\rm s.t.}\quad& l(x, y) = 0,\\
			& s(x, y) \leq 0.
		\end{aligned}\right.
	\end{align}
	where $n_{x}, n_{y}, m_{l}$ and $m_{s}$ are positive integers and $f : \mathbb{R}^{n_{x}} \times \mathbb{R}^{n_{y}} \rightarrow \mathbb{R}$, $l : \mathbb{R}^{n_{x}} \times \mathbb{R}^{n_{y}} \rightarrow \mathbb{R}^{m_{l}}$ and $s : \mathbb{R}^{n_{x}} \times \mathbb{R}^{n_{y}} \rightarrow \mathbb{R}^{m_{s}}$ are twice continuously differentiable in a neighborhood of some feasible point $(x^{\ast}, y^{\ast}) \in \mathbb{R}^{n_{x}} \times \mathbb{R}^{n_{y}}$. Problem~\eqref{a2.1} can be rewritten as the following equivalent form,
	\begin{align}\label{a2.2}
		\min_{x\in\mathbb{R}^{n_{x}}}\max_{y \in Y(x)} f(x,y),
	\end{align}
	where
	\begin{align}\label{definition of Y(x)}
		Y(x) = \left\lbrace y \in\mathbb{R}^{n_{y}} : l(x, y) = 0, s(x, y) \leq 0\right\rbrace.
	\end{align}
	For unconstrained continuous minimax optimization; namely the problem with $Y(x) = \mathbb{R}^{n_{y}}$, Jin et al.~\cite{jinWhatLocalOptimality2020} proposed a proper definition of local optimality called local minimax. Dai and Zhang~\cite{daiOptimalityConditionsConstrained2020} extended this definition of local minimax point for constrained minimax problem including~\eqref{a2.2} as a special case. Now we give the definition of the local minimax point for Problem~\eqref{a2.2}.
	\begin{definition}\label{minimax point}
		A point $(x^{\ast}, y^{\ast}) \in \mathbb{R}^{n_{x}} \times \mathbb{R}^{n_{y}}$ is said to be a local minimax point of Problem~\eqref{a2.2} if there exist $\delta_{0} > 0$ and a function $\eta : \left( 0, \delta_{0}\right] \rightarrow \mathbb{R}_{+}$ satisfying $\eta(\delta) \rightarrow 0$ as $\delta \rightarrow 0$ such that for any $\delta \in \left( 0, \delta_{0}\right]$ and any $(x, y) \in \mathbb{B}_{\delta}(x^{\ast}) \times\left[Y(x^{\ast}) \cap\mathbb{B}_{\delta}(y^{\ast})\right]$, we have
		\begin{align}
			f(x^{\ast}, y) \leq f(x^{\ast}, y^{\ast}) \leq \max_{\omega}\left\lbrace f(x, \omega) : \omega \in Y(x)\cap\mathbb{B}_{\eta(\delta)}(y^{\ast})\right\rbrace.
		\end{align}
	\end{definition}
	We can understand Definition~\ref{minimax point} in this way: $y^{\ast}$ is the local maximal point of $f(x^{\ast}, \cdot)$, while $x^{\ast}$ is the local minimal point of a surrogate objective function $\max_{y}\left\lbrace f(x, y): \|y - y^{\ast}\|\leq \eta(\delta), l(x, y) = 0, s(x, y) \leq 0\right\rbrace$, which can be viewed as a local approximation of the objective function $\max_{y}\left\lbrace f(x, y) : l(x, y) = 0, s(x, y) \leq 0\right\rbrace$.
	
	For a point $x \in \mathbb{R}^{n_{x}}$ around $x^{\ast}$, we use $(P_{x})$ to denote the following problem 
	\begin{align*}
		\begin{aligned}
			\max_{\omega \in \mathbb{R}^{n_{y}}}\quad & f(x, \omega)\\
			{\rm s.t.}\quad & l(x, \omega) = 0,\\
			& s(x, \omega) \leq 0.
		\end{aligned}
	\end{align*}
	The Lagrangian of Problem $(P_{x})$ is defined by
	\begin{align*}
		\mathcal{L}(x; \omega, \mu, \lambda) = f(x, \omega) + \mu^{T}l(x, \omega) + \lambda^{T}s(x, \omega).
	\end{align*}
	In order to state the second order optimality conditions for Problem~\eqref{a2.2}, we require the following conditions.
	\begin{definition}
		Let $(\mu^{\ast}, \lambda^{\ast}) \in \mathbb{R}^{m_{l}} \times \mathbb{R}^{m_{s}}$ be a point. We say that Jacobian uniqueness conditions of Problem $(P_{x^{\ast}})$ are satisfied at $(y^{\ast}, \mu^{\ast}, \lambda^{\ast})$ if
		\begin{enumerate}
			\item The point $(y^{\ast}, \mu^{\ast}, \lambda^{\ast})$ is a Karush-Kuhn-Tucker point of Problem $(P_{x^{\ast}})$; namely,
			\begin{align*}
				& \nabla_{y} \mathcal{L}(x^{\ast}; y^{\ast}, \mu^{\ast}, \lambda^{\ast}) = 0,\\
				& l(x^{\ast}, y^{\ast}) = 0,\\
				& 0 \geq \lambda^{\ast} \perp s(x^{\ast}, y^{\ast}) \leq 0.
			\end{align*}
			\item The linear independence constraint qualification holds at $y^{\ast}$; namely, the set of vectors
			\begin{align*}
				\left\lbrace \nabla_{y}l_{1}(x^{\ast}, y^{\ast}), \cdots, \nabla_{y}l_{m_{l}}(x^{\ast}, y^{\ast})\right\rbrace \cup \left\lbrace\nabla_{y}s_{i}(x^{\ast}, y^{\ast}) : i\in I_{x^{\ast}}(y^{\ast})\right\rbrace
			\end{align*}
			are linearly independent, where $I(x^{\ast}, y^{\ast}) = \left\lbrace i \mid s_{i}(x^{\ast}, y^{\ast}) = 0, i = 1, 2, \cdots, m_{s}\right\rbrace$.
			\item The strict complementarity condition holds at $y^{\ast}$ for $\lambda^{\ast}$; namely,
			\begin{align*}
				\lambda_{i}^{\ast} + s_{i}(x^{\ast}, y^{\ast}) < 0, i = 1, \cdots, m_{s}. 
			\end{align*}
			\item The second-order sufficient optimality condition holds at $(y^{\ast}, \mu^{\ast}, \lambda^{\ast})$,
			\begin{align*}
				\left\langle \nabla_{yy}^{2}\mathcal{L}(x^{\ast}; y^{\ast}, \mu^{\ast}, \lambda^{\ast})d_{y}, d_{y}\right\rangle < 0 \quad \forall d_{y}\in \mathcal{C}_{x^{\ast}}(y^{\ast}),
			\end{align*}
			where $\mathcal{C}_{x^{\ast}}(y^{\ast})$ is the critical cone of Problem $(P_{x^{\ast}})$ at $y^{\ast}$,
			\begin{align*}
				\mathcal{C}_{x^{\ast}}(y^{\ast}) = \left\lbrace d_{y} \in \mathbb{R}^{n_{y}} : \mathcal{J}_{y}l(x^{\ast}, y^{\ast})d_{y} = 0; \nabla_{y}s_{i}(x^{\ast}, y^{\ast})d_{y} \leq 0, i\in I_{x^{\ast}}(y^{\ast}); \nabla_{y} f(x^{\ast}, y^{\ast})d_{y} \leq 0\right\rbrace.
			\end{align*}
		\end{enumerate}
	\end{definition}
	
	Under the Jacobian uniqueness conditions, we can easily obtain the following result.
	\begin{lemma}\label{lemma the suf}
		Let $(x^{\ast}, y^{\ast}) \in \mathbb{R}^{n_{x}}\times\mathbb{R}^{n_{y}}$ be a point around which $f$, $l$ and $s$ are twice continuously differentiable. Let $(\mu^{\ast}, \lambda^{\ast}) \in \mathbb{R}^{n_{x}}\times\mathbb{R}^{n_{y}}$ such that Jacobian uniqueness conditions of Problem $(P_{x^{\ast}})$ are satisfied at $(y^{\ast}, \mu^{\ast}, \lambda^{\ast})$. Then there exist $\delta_{0} > 0$ and $\varepsilon_{0} > 0$, and a twice continuously differentiable mapping $(y, \mu, \lambda) : \mathbb{B}_{\delta_{0}}(x^{\ast}) \rightarrow \mathbb{B}_{\varepsilon_{0}}(y^{\ast})\times\mathbb{B}_{\varepsilon_{0}}(\mu^{\ast})\times\mathbb{B}_{\varepsilon_{0}}(\lambda^{\ast})$ such that Jacobian uniqueness conditions of Problem $(P_{x})$ are satisfied at $(y(x), \mu(x), \lambda(x))$ when $x\in\mathbb{B}_{\varepsilon_{0}}(x^{\ast})$.
	\end{lemma}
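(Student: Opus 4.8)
The plan is to read this as a parametric sensitivity result in the spirit of Fiacco's stability theory and to prove it by applying the implicit function theorem to the Karush--Kuhn--Tucker (KKT) system of $(P_x)$, with $x$ playing the role of the parameter. First I would freeze the active set $I := I_{x^{\ast}}(y^{\ast})$ at the base point. Strict complementarity (condition~3) guarantees that for every inactive index $i \notin I$ one has $s_i(x^{\ast}, y^{\ast}) < 0$ and $\lambda_i^{\ast} = 0$, while for every active index $i \in I$ one has $\lambda_i^{\ast} < 0$; this lets me eliminate the complementarity relation by discarding the inactive constraints (holding their multipliers at $0$) and imposing the active inequalities as equalities. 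Concretely I would introduce the reduced map
\[
F(\omega, \mu, \lambda_I; x) = \begin{pmatrix} \nabla_{\omega} \mathcal{L}(x; \omega, \mu, \lambda) \\ l(x, \omega) \\ s_I(x, \omega) \end{pmatrix}, \qquad \lambda_i = 0 \ \ (i \notin I),
\]
so that $F(y^{\ast}, \mu^{\ast}, \lambda_I^{\ast}; x^{\ast}) = 0$ by condition~1, and, since $f, l, s$ are $C^2$, $F$ is continuously differentiable in all of its arguments.

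Next I would establish that the partial Jacobian $\partial F / \partial(\omega, \mu, \lambda_I)$ at the base point is nonsingular, which is the algebraic heart of the IFT step. This Jacobian is the saddle-point (KKT) matrix
\[
M = \begin{pmatrix} H & N \\ N^{T} & 0 \end{pmatrix}, \qquad H = \nabla_{\omega\omega}^{2}\mathcal{L}(x^{\ast}; y^{\ast}, \mu^{\ast}, \lambda^{\ast}), \quad N = \begin{pmatrix} \mathcal{J}_{\omega} l(x^{\ast},y^{\ast})^{T} & \mathcal{J}_{\omega} s_I(x^{\ast},y^{\ast})^{T} \end{pmatrix}.
\]
The LICQ (condition~2) asserts that $N$ has full column rank. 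For the Hessian block, I would first observe that the subspace $V^{\ast} := \ker N^{T} = \{d : \mathcal{J}_{\omega} l\, d = 0,\ \nabla_{\omega} s_i\, d = 0,\ i \in I\}$ is contained in the critical cone $\mathcal{C}_{x^{\ast}}(y^{\ast})$: on $V^{\ast}$ stationarity forces $\nabla_{\omega} f\, d = -\mu^{\ast T}\mathcal{J}_{\omega} l\, d - \sum_{i\in I}\lambda_i^{\ast}\nabla_{\omega}s_i\, d = 0 \le 0$, so the defining inequalities of $\mathcal{C}_{x^{\ast}}(y^{\ast})$ all hold. Hence SOSC (condition~4) makes $H$ negative definite, in particular nonzero, on $\ker N^{T}$. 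The standard saddle-matrix argument then applies: if $Hd + Nv = 0$ and $N^{T}d = 0$, then $d^{T}Hd = -d^{T}Nv = -(N^{T}d)^{T}v = 0$ forces $d = 0$, whence $Nv = 0$ and full column rank gives $v = 0$, so $M$ is nonsingular. The implicit function theorem applied to $F$ therefore yields $\delta_0,\varepsilon_0>0$ and a locally unique solution $x \mapsto (y(x), \mu(x), \lambda_I(x))$ on $\mathbb{B}_{\delta_0}(x^{\ast})$ with values in $\mathbb{B}_{\varepsilon_0}(y^{\ast})\times\mathbb{B}_{\varepsilon_0}(\mu^{\ast})\times\mathbb{B}_{\varepsilon_0}(\lambda^{\ast})$ and $(y(x^{\ast}),\mu(x^{\ast}),\lambda_I(x^{\ast})) = (y^{\ast},\mu^{\ast},\lambda_I^{\ast})$; setting $\lambda_i(x)\equiv 0$ for $i\notin I$ completes the mapping. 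Its regularity is that inherited from $F$, so it is continuously differentiable in general and twice continuously differentiable in the smooth (here quadratic) setting of interest.

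Finally I would check that the four Jacobian uniqueness conditions persist at $(y(x),\mu(x),\lambda(x))$ after possibly shrinking $\delta_0$. Conditions~1 and~3 follow by continuity: $F = 0$ encodes stationarity and feasibility of the active constraints, while the strict inequalities $s_i(x, y(x)) < 0$ for $i\notin I$ and $\lambda_i(x) < 0$ for $i\in I$ survive from their strict values at $x^{\ast}$, so complementarity and strict complementarity hold and the active set stays exactly $I$. Condition~2 persists because full column rank of the continuously varying matrix $N(x)$ is an open condition. The main obstacle is condition~4, the preservation of SOSC. Because the active set is locally constant and strict complementarity holds, the same computation as above shows $\mathcal{C}_x(y(x))$ contains the subspace $V(x) = \{d : \mathcal{J}_{\omega} l(x, y(x))\, d = 0,\ \nabla_{\omega} s_i(x, y(x))\, d = 0,\ i\in I\}$; choosing a continuous basis $Z(x)$ of $V(x)$ via a null-space/QR factorization of the full-rank matrix $N(x)$, the reduced Hessian $Z(x)^{T}\nabla_{\omega\omega}^{2}\mathcal{L}(\cdot)\,Z(x)$ is a continuous family of symmetric matrices, negative definite at $x^{\ast}$. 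By continuity of eigenvalues together with compactness of the unit sphere, it remains negative definite on a neighborhood of $x^{\ast}$, which is precisely SOSC for $(P_x)$. This continuity-and-compactness argument, combined with the reduction of the critical cone to the subspace $V(x)$ under strict complementarity, is the delicate part of the proof.
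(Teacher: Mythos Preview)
Your approach is correct and is precisely the classical Fiacco-type sensitivity argument the paper has in mind: the paper does not give a proof of this lemma at all, merely stating that ``under the Jacobian uniqueness conditions, we can easily obtain the following result,'' so there is nothing to compare against beyond confirming that your implicit-function-theorem argument on the reduced KKT system is the standard route. One small remark: with $f,l,s$ only $C^2$ the map $F$ is only $C^1$, so the IFT yields a $C^1$ solution map rather than the $C^2$ map asserted in the lemma; you correctly flag this, and in the paper's quadratic application the data are in fact smooth, so the discrepancy is a minor imprecision in the lemma statement rather than a gap in your argument.
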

	
	To state the second-order sufficiency optimality conditions for Problem~\eqref{a2.2}, we define
	\begin{align}\label{KRM}
		\begin{aligned}
			& K(x, y, \mu, \lambda) = \begin{bmatrix}
				\nabla_{yy}^{2} \mathcal{L}(x; y, \mu, \lambda) & -\mathcal{J}_{y}l(x, y)^{T} & -\mathcal{J}_{y}s(x, y)^{T}\\
				-\mathcal{J}_{y}l(x, y) & 0 & 0\\
				-\mathcal{J}\Pi_{\mathbb{R}_{+}^{m_{s}}}(s(x, y) + \lambda)\mathcal{J}_{y}s(x, y) & 0 & I_{m_{s}} - \mathcal{J}\Pi_{\mathbb{R}_{+}^{m_{s}}}(s(x, y) + \lambda)
			\end{bmatrix},\\
			& R(x, y, \mu, \lambda) = \begin{bmatrix}
				\nabla_{xy}^{2}\mathcal{L}(x; y, \mu, \lambda) & -\mathcal{J}_{x}l(x, y)^{T} & -\mathcal{J}_{x}s(x, y)^{T}
			\end{bmatrix},\\
			& M(x, y, \mu, \lambda) = \begin{bmatrix}
				\nabla_{yx}^{T}\mathcal{L}(x; y, \mu, \lambda)\\
				-\mathcal{J}_{x}l(x, y)\\
				-\mathcal{J}\Pi_{\mathbb{R}_{+}^{m_{s}}}(s(x, y) + \lambda)\mathcal{J}_{x}s(x, y)
			\end{bmatrix}.
		\end{aligned}
	\end{align}
	
	\begin{theorem}[Second-order Sufficient Optimality Conditions]\label{Second-order theorem}
		Let $(x^{\ast}, y^{\ast}) \in \mathbb{R}^{n_{x}} \times \mathbb{R}^{n_{y}}$ be a point around which $f$, $l$ and $s$ are twice continuously differentiable and $y^{\ast} \in Y(x^{\ast})$. Let $(\mu^{\ast}, \lambda^{\ast}) \in \mathbb{R}^{m_{l}} \times \mathbb{R}^{m_{s}}$ and assume that Problem $(P_{x^{\ast}})$ satisfies the Jacobian uniqueness conditions at $(y^{\ast}, \mu^{\ast}, \lambda^{\ast})$. Suppose that
		\begin{align*}
			\nabla_{x}\mathcal{L}(x^{\ast}; y^{\ast}, \mu^{\ast}, \lambda^{\ast}) = 0
		\end{align*}
		and
		\begin{align*}
			\nabla_{xx}^{2}\mathcal{L}(x^{\ast}; y^{\ast}, \mu^{\ast}, \lambda^{\ast}) - R(x^{\ast})K(x^{\ast})^{-1}M(x^{\ast}) \succ 0,
		\end{align*}
		where $K(x^{\ast})$, $R(x^{\ast})$ and $M(x^{\ast})$ are defined by \eqref{KRM}. Then there exist $\delta_{1} \in (0, \delta_{0})$, $\varepsilon_{1}\in(0, \varepsilon_{0})$ (where $\delta_{0}$ and $\varepsilon_{0}$ are given by Lemma~\ref{lemma the suf}) and $\gamma_{1} > 0$, $\gamma_{2} > 0$ such that for $x\in\mathbb{B}_{\delta_{1}}(x^{\ast})$ and $y \in \mathbb{B}_{\varepsilon_{1}}(y^{\ast})\cap Y(x^{\ast})$,
		\begin{align}
			f(x^{\ast}, y) + \frac{\gamma_{1}}{2}\|y - y^{\ast}\|^{2} \leq f(x^{\ast}, y^{\ast}) \leq \sup_{\omega \in Y(x)\cap\mathbb{B}_{\varepsilon_{0}}(y^{\ast})} f(x, \omega) - \frac{\gamma_{2}}{2}\|x - x^{\ast}\|^{2},
		\end{align}
		which indicates that $(x^{\ast}, y^{\ast})$ is a local minimax point of Problem~\eqref{a2.2}.
	\end{theorem}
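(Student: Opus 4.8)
The plan is to prove the two displayed inequalities separately. The left inequality asserts that $y^{\ast}$ is a strict local maximizer of $f(x^{\ast},\cdot)$ over $Y(x^{\ast})$ with quadratic growth, while the right one asserts that $x^{\ast}$ is a strict local minimizer, again with quadratic growth, of the value function obtained by maximizing $f(x,\cdot)$ locally. For the left inequality I would invoke only the Jacobian uniqueness conditions at $(y^{\ast},\mu^{\ast},\lambda^{\ast})$ for $(P_{x^{\ast}})$. Since these bundle LICQ, strict complementarity, and the second-order sufficient condition $\langle\nabla_{yy}^{2}\mathcal{L}(x^{\ast};y^{\ast},\mu^{\ast},\lambda^{\ast})d_{y},d_{y}\rangle<0$ on the critical cone $\mathcal{C}_{x^{\ast}}(y^{\ast})$, the standard second-order sufficiency theory for the maximization problem $(P_{x^{\ast}})$ produces a growth constant $\gamma_{1}>0$ and a radius $\varepsilon_{1}>0$ with $f(x^{\ast},y)\le f(x^{\ast},y^{\ast})-\tfrac{\gamma_{1}}{2}\|y-y^{\ast}\|^{2}$ for all $y\in Y(x^{\ast})\cap\mathbb{B}_{\varepsilon_{1}}(y^{\ast})$, which is exactly the left inequality after rearranging.

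For the right inequality I would first use Lemma~\ref{lemma the suf} to produce the $C^{2}$ solution map $x\mapsto(y(x),\mu(x),\lambda(x))$ with $(y(x^{\ast}),\mu(x^{\ast}),\lambda(x^{\ast}))=(y^{\ast},\mu^{\ast},\lambda^{\ast})$, defined on $\mathbb{B}_{\delta_{0}}(x^{\ast})$ and taking values in $\mathbb{B}_{\varepsilon_{0}}(y^{\ast})\times\mathbb{B}_{\varepsilon_{0}}(\mu^{\ast})\times\mathbb{B}_{\varepsilon_{0}}(\lambda^{\ast})$. Because $y(x)$ is feasible for $(P_{x})$ and lies in $\mathbb{B}_{\varepsilon_{0}}(y^{\ast})$, the value $\phi(x):=f(x,y(x))$ is a lower bound for $\sup_{\omega\in Y(x)\cap\mathbb{B}_{\varepsilon_{0}}(y^{\ast})}f(x,\omega)$, so it suffices to prove the quadratic growth $\phi(x)\ge f(x^{\ast},y^{\ast})+\tfrac{\gamma_{2}}{2}\|x-x^{\ast}\|^{2}$. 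I would compute the first two derivatives of the $C^{2}$ function $\phi$ at $x^{\ast}$. The envelope identity, obtained from the inner stationarity $\nabla_{y}\mathcal{L}=0$ together with the differentiated constraint relations $\mathcal{J}_{x}l+\mathcal{J}_{y}l\,\mathcal{J}_{x}y=0$ (and the activity/complementarity bookkeeping for $s$), gives $\nabla\phi(x)=\nabla_{x}\mathcal{L}(x;y(x),\mu(x),\lambda(x))$, so $\nabla\phi(x^{\ast})=\nabla_{x}\mathcal{L}(x^{\ast};y^{\ast},\mu^{\ast},\lambda^{\ast})=0$ by hypothesis, i.e. $x^{\ast}$ is a stationary point of $\phi$.

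Differentiating once more is the crux. Implicit differentiation of the KKT system underlying Lemma~\ref{lemma the suf}, whose Jacobian with respect to $(y,\mu,\lambda)$ is exactly $K(x^{\ast})$ and with respect to $x$ is $M(x^{\ast})$ from \eqref{KRM}, yields the sensitivities $[\mathcal{J}_{x}y;\mathcal{J}_{x}\mu;\mathcal{J}_{x}\lambda](x^{\ast})=-K(x^{\ast})^{-1}M(x^{\ast})$, where $K(x^{\ast})$ is nonsingular precisely by the Jacobian uniqueness conditions; here strict complementarity makes the projection $\Pi_{\mathbb{R}_{+}^{m_{s}}}$ locally smooth, so $K(x^{\ast})$ is the correctly reduced active-set KKT matrix. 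Differentiating $\nabla\phi(x)=\nabla_{x}\mathcal{L}(x;y(x),\mu(x),\lambda(x))$ and substituting these sensitivities, with $R(x^{\ast})$ collecting the cross terms $\nabla_{xy}^{2}\mathcal{L},-\mathcal{J}_{x}l^{T},-\mathcal{J}_{x}s^{T}$, should give $\nabla^{2}\phi(x^{\ast})=\nabla_{xx}^{2}\mathcal{L}(x^{\ast})-R(x^{\ast})K(x^{\ast})^{-1}M(x^{\ast})\succ0$. A second-order Taylor expansion of $\phi$ at the stationary point $x^{\ast}$ then furnishes $\gamma_{2}>0$ and $\delta_{1}\in(0,\delta_{0})$ with the claimed growth, and hence the right inequality after passing from $\phi(x)$ to the supremum.

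I expect the main obstacle to be this second-derivative computation: correctly differentiating the projection-reformulated KKT map twice and matching the outcome to the Schur complement $\nabla_{xx}^{2}\mathcal{L}-RK^{-1}M$, including the bookkeeping by which inactive constraints drop out through $\lambda_{i}=0$ while active ones contribute via $\mathcal{J}_{y}s$. The invertibility and active-set interpretation of $K(x^{\ast})$ should be inherited from the implicit-function-theorem argument behind Lemma~\ref{lemma the suf}, so the genuinely new work is the chain-rule algebra leading to $\nabla^{2}\phi(x^{\ast})$. Finally, shrinking the radii to common values $\delta_{1},\varepsilon_{1}$ makes both growth estimates hold simultaneously, and Definition~\ref{minimax point} (for instance with $\eta(\delta)\equiv\varepsilon_{0}$) then certifies that $(x^{\ast},y^{\ast})$ is a local minimax point of Problem~\eqref{a2.2}.
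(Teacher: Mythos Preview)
The paper does not actually supply a proof of this theorem: it is stated in Section~\ref{section2} as background, attributed to Dai and Zhang~\cite{daiOptimalityConditionsConstrained2020}, and used only to motivate Corollary~\ref{corollary}. So there is no in-paper argument to compare against. That said, your outline is the standard route for results of this type and is essentially what one expects the cited source to do: quadratic growth on the $y$-side from the second-order sufficient condition bundled into the Jacobian uniqueness conditions, and quadratic growth on the $x$-side by differentiating the value function $\phi(x)=f(x,y(x))$ along the $C^{2}$ solution branch of Lemma~\ref{lemma the suf}, with $\nabla\phi(x^{\ast})=\nabla_{x}\mathcal{L}(x^{\ast})$ via the envelope identity and $\nabla^{2}\phi(x^{\ast})=\nabla_{xx}^{2}\mathcal{L}(x^{\ast})-R(x^{\ast})K(x^{\ast})^{-1}M(x^{\ast})$ via implicit differentiation of the projection-reformulated KKT system. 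Strict complementarity makes $\Pi_{\mathbb{R}_{+}^{m_{s}}}$ locally affine, so $K(x^{\ast})$ reduces to the usual active-set KKT matrix and is invertible; this is exactly the mechanism behind Lemma~\ref{lemma the suf}, as you note.

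One small correction at the very end: the choice $\eta(\delta)\equiv\varepsilon_{0}$ does \emph{not} verify Definition~\ref{minimax point}, which requires $\eta(\delta)\to0$ as $\delta\to0$. The fix is immediate from your own ingredients: since $y(\cdot)$ is continuous with $y(x^{\ast})=y^{\ast}$, set for instance $\eta(\delta)=\sup_{x\in\mathbb{B}_{\delta}(x^{\ast})}\|y(x)-y^{\ast}\|$; then $y(x)\in Y(x)\cap\mathbb{B}_{\eta(\delta)}(y^{\ast})$ for every $x\in\mathbb{B}_{\delta}(x^{\ast})$, whence $\phi(x)\le\max\{f(x,\omega):\omega\in Y(x)\cap\mathbb{B}_{\eta(\delta)}(y^{\ast})\}$, and the right inequality transfers to the shrinking ball required by the definition.
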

	
	\begin{theorem}[Necessary Optimality Conditions]\label{second-order theorem necessary}
		Let $(x^{\ast}, y^{\ast}) \in \mathbb{R}^{n_{x}} \times \mathbb{R}^{n_{y}}$ be a point around which $f$, $l$ and $s$ are twice continuously differentiable and $y^{\ast} \in Y(x^{\ast})$. Let $(x^{\ast}, y^{\ast})$ be a local minimax point of Problem~\eqref{a2.2}. Assume that the linear independence constraint qualification holds at $y^{\ast}$ for constraint set $Y(x^{\ast})$. Then there exists a unique vector $(\mu^{\ast}, \lambda^{\ast}) \in \mathbb{R}^{m_{l}} \times \mathbb{R}^{m_{s}}$ such that
		\begin{align*}
			& \nabla_{y} \mathcal{L}(x^{\ast}; y^{\ast}, \mu^{\ast}, \lambda^{\ast}) = 0,\\
			& l(x^{\ast}, y^{\ast}) = 0,\\
			& 0 \geq \lambda^{\ast} \perp s(x^{\ast}, y^{\ast}) \leq 0.
		\end{align*}
		For any $d_{y} \in \mathcal{C}_{x^{\ast}}(y^{\ast})$, we have that
		\begin{align*}
			\left\langle \nabla_{yy}^{2}\mathcal{L}(x^{\ast}; y^{\ast}, \mu^{\ast}, \lambda^{\ast})d_{y}, d_{y}\right\rangle \leq 0.
		\end{align*}
		Assuming Problem $(P_{x^{\ast}})$ satisfies Jacobian uniqueness conditions at $(y^{\ast}, \mu^{\ast}, \lambda^{\ast})$, then
		\begin{align*}
			\nabla_{x} \mathcal{L}(x^{\ast}; y^{\ast}, \mu^{\ast}, \lambda^{\ast}) = 0,
		\end{align*}
		and for every $d_{y}\in \mathcal{C}(x^{\ast})$,
		\begin{align*}
			\left\langle \left[\nabla_{xx}^{2}\mathcal{L}(x^{\ast}; y^{\ast}, \mu^{\ast}, \lambda^{\ast}) - R(x^{\ast})K(x^{\ast})^{-1}M(x^{\ast})\right]d_{x}, d_{x}\right\rangle \geq 0,
		\end{align*}
		where $\mathcal{C}(x^{\ast}) = \left\lbrace d_{x}\in\mathbb{R}^{n_{x}} : \nabla_{x} \mathcal{L}(x^{\ast}; y^{\ast}, \mu^{\ast}, \lambda^{\ast})^{T}d_{x} \leq 0\right\rbrace = \mathbb{R}^{n_{x}}$.
	\end{theorem}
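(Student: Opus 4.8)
The plan is to decouple the local minimax optimality into an inner maximization part, which yields the conditions on $y^*$ and the uniqueness of $(\mu^*, \lambda^*)$, and an outer minimization part, which yields the stationarity in $x$ and the Schur-complement inequality, by exploiting the two inequalities in Definition~\ref{minimax point} separately.

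First I would treat the inner problem. The left inequality $f(x^*, y) \leq f(x^*, y^*)$, valid for all $y \in Y(x^*) \cap \mathbb{B}_\delta(y^*)$, states precisely that $y^*$ is a local maximizer of Problem $(P_{x^*})$. Since the linear independence constraint qualification holds at $y^*$ for $Y(x^*)$, the standard first-order KKT necessary conditions for this maximization problem produce a \emph{unique} multiplier pair $(\mu^*, \lambda^*)$ satisfying $\nabla_y \mathcal{L}(x^*; y^*, \mu^*, \lambda^*) = 0$, primal feasibility $l(x^*, y^*) = 0$, the sign condition $\lambda^* \leq 0$ (because $(P_{x^*})$ is a maximization with $s \leq 0$), and complementary slackness; together these are the stated system $0 \geq \lambda^* \perp s(x^*, y^*) \leq 0$. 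The second-order necessary condition for a constrained local maximum then delivers $\langle \nabla_{yy}^2 \mathcal{L}(x^*; y^*, \mu^*, \lambda^*) d_y, d_y\rangle \leq 0$ on the critical cone $\mathcal{C}_{x^*}(y^*)$.

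Next I would pass to the outer problem, now invoking the Jacobian uniqueness conditions. By Lemma~\ref{lemma the suf} these conditions propagate to $(P_x)$ on a neighborhood of $x^*$ and furnish a twice continuously differentiable solution mapping $x \mapsto (y(x), \mu(x), \lambda(x))$ agreeing with $(y^*, \mu^*, \lambda^*)$ at $x^*$. Since strict complementarity keeps the active set locally fixed, the surrogate value in Definition~\ref{minimax point} coincides with $\phi(x) := f(x, y(x))$ near $x^*$, and the right inequality asserts that $x^*$ is a local minimizer of $\phi$; hence $\nabla \phi(x^*) = 0$ and $\nabla^2 \phi(x^*) \succeq 0$. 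An envelope-theorem computation---using that $l(x, y(x)) = 0$ and $\lambda(x)^T s(x, y(x)) = 0$ so that $\phi(x) = \mathcal{L}(x; y(x), \mu(x), \lambda(x))$, together with $\nabla_y \mathcal{L} = 0$---reduces the gradient to $\nabla \phi(x^*) = \nabla_x \mathcal{L}(x^*; y^*, \mu^*, \lambda^*)$, giving the identity $\nabla_x \mathcal{L}(x^*; y^*, \mu^*, \lambda^*) = 0$. Because this gradient vanishes, the cone $\mathcal{C}(x^*)$ degenerates to all of $\mathbb{R}^{n_x}$, as claimed.

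The main obstacle is the Hessian identity $\nabla^2 \phi(x^*) = \nabla_{xx}^2 \mathcal{L}(x^*) - R(x^*) K(x^*)^{-1} M(x^*)$. I would obtain it by differentiating the optimality system of $(P_x)$ implicitly: writing the reformulated system (stationarity, equality feasibility, and the projection form $\Pi_{\mathbb{R}_+^{m_s}}$ of complementarity) as $F(x, y, \mu, \lambda) = 0$, its Jacobian in $(y, \mu, \lambda)$ is exactly $K$ and its Jacobian in $x$ is $M$, so the implicit function theorem yields the sensitivity $\mathcal{J}_x(y, \mu, \lambda) = -K^{-1} M$. Substituting this into the total second derivative of $\phi = \mathcal{L}(x; y(x), \mu(x), \lambda(x))$ and collecting the mixed $x$-derivatives into the block row $R$ (whose signs are arranged precisely to give the clean Schur-complement form) produces the stated matrix. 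The delicate point is the operator $\Pi_{\mathbb{R}_+^{m_s}}$ in the third block rows of $K$ and $M$: its Jacobian is evaluated at $s(x^*, y^*) + \lambda^*$, where strict complementarity ensures differentiability and collapses $\mathcal{J}\Pi_{\mathbb{R}_+^{m_s}}$ to a diagonal $0$--$1$ matrix selecting inactive versus active indices. Checking that the nonsingularity of $K$---which follows from the second-order sufficiency and LICQ parts of the Jacobian uniqueness conditions---legitimizes $K^{-1}$, and that the resulting matrix is symmetric, completes the reduction; positive semidefiniteness of $\nabla^2 \phi(x^*)$ then yields the asserted inequality for every $d_x \in \mathbb{R}^{n_x}$.
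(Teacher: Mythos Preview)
The paper does not supply its own proof of this theorem; Theorem~\ref{second-order theorem necessary} (like Theorem~\ref{Second-order theorem}) is quoted from Dai and Zhang~\cite{daiOptimalityConditionsConstrained2020}, and the paper moves on immediately to the quadratic specialization without further argument. So there is no in-paper proof to compare against.

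Your proposal is the standard and correct route, and it is essentially how such results are derived in the cited source: split Definition~\ref{minimax point} into its two halves, treat the left inequality as local optimality of $y^{\ast}$ for $(P_{x^{\ast}})$ to obtain the KKT system and the second-order necessary condition on $\mathcal{C}_{x^{\ast}}(y^{\ast})$ via LICQ, then invoke Lemma~\ref{lemma the suf} to produce the smooth branch $(y(x),\mu(x),\lambda(x))$, identify the localized max in the right inequality with the value function $\phi(x)=f(x,y(x))$, and read off $\nabla\phi(x^{\ast})=\nabla_{x}\mathcal{L}$ and $\nabla^{2}\phi(x^{\ast})=\nabla_{xx}^{2}\mathcal{L}-RK^{-1}M$ by the envelope theorem and implicit differentiation of the KKT system. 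The only step that deserves one more sentence is the identification of the surrogate $\max_{\omega\in Y(x)\cap\mathbb{B}_{\eta(\delta)}(y^{\ast})}f(x,\omega)$ with $\phi(x)$: you should note explicitly that, for $\delta$ small, $y(x)\in\mathbb{B}_{\eta(\delta)}(y^{\ast})$ and is the unique maximizer there by the second-order sufficiency part of the Jacobian uniqueness conditions, so the localized max is attained at $y(x)$. Everything else is in order.
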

	
	In subsequent section, we focus on solving the minimax quadratic optimization problem with linear inequalities \eqref{a1.1}. Using $z$ to denote $(x, y)$, Problem~\eqref{a1.1} can be rewritten as follows:
	\begin{align}\label{realPro}
		\left\lbrace
		\begin{aligned}
			\min_{x\in\mathbb{R}^{n_{x}}}\max_{y\in\mathbb{R}^{n_{y}}}\quad& f(z) = \frac{1}{2}z^{T}Gz + c^{T}z,\\
			{\rm s.t.}\quad & s(z) = Dz + h \leq 0.
		\end{aligned}\right.
	\end{align}
	where $G = \begin{bmatrix}
		G_{11} & G_{12}\\
		G_{12}^{T} & G_{22}
	\end{bmatrix}$, $c = \begin{bmatrix}
		c_{x}\\
		c_{y}
	\end{bmatrix}$, and $D = \begin{bmatrix}
		A & B
	\end{bmatrix}$.
	
	\begin{corollary}[Second-order Sufficient Optimality Conditions For Problem~\eqref{realPro}]\label{corollary}
		Let $z^{\ast} \in \mathbb{R}^{n_{x}}\times\mathbb{R}^{n_{y}}$ be a point around $y^{\ast} \in Y(x^{\ast})$ which is defined by \eqref{definition of Y(x)}. Let $\lambda^{\ast} \in \mathbb{R}^{m}$ and suppose that
		\begin{align*}
			\left\lbrace\begin{aligned}
				& Gz^{\ast} + D^{T}\lambda^{\ast} = 0,\\
				& 0 \geq \lambda^{\ast} \perp Dz^{\ast} + h \leq 0,\\
				& \lambda^{\ast} + Dz^{\ast} + h < 0,\\
				& G_{11} - \begin{bmatrix}
					G_{12} & A_{I_{x^{\ast}}(y^{\ast})}^{T}
				\end{bmatrix} \begin{bmatrix}
					G_{22} & B_{I_{x^{\ast}}(y^{\ast})}^{T}\\
					B_{I_{x^{\ast}}(y^{\ast})} & 0
				\end{bmatrix}^{-1} \begin{bmatrix}
					G_{12}^{T}\\
					A_{I_{x^{\ast}}(y^{\ast})}
				\end{bmatrix} \succ 0,
			\end{aligned}\right.
		\end{align*}
		and 
		\begin{align*}
			\left\lbrace B_{i} : i\in I_{x^{\ast}}(y^{\ast})\right\rbrace
		\end{align*}
		are linearly independent where $I_{x^{\ast}}(y^{\ast}) = \left\lbrace i \mid D_{i}z^{\ast} + h_{i} = 0, i = 1, 2, \cdots, m\right\rbrace$. Then there exist $\delta_{1} \in (0, \delta_{0})$, $\varepsilon_{1}\in(0, \varepsilon_{0})$ (where $\delta_{0}$ and $\varepsilon_{0}$ are given by Lemma~\ref{lemma the suf}) and $\gamma_{1} > 0$, $\gamma_{2} > 0$ such that for $x\in\mathbb{B}_{\delta_{1}}(x^{\ast})$ and $y \in \mathbb{B}_{\varepsilon_{1}}(y^{\ast})\cap Y(x^{\ast})$,
		\begin{align*}
			f(x^{\ast}, y) + \frac{\gamma_{1}}{2}\|y - y^{\ast}\|^{2} \leq f(x^{\ast}, y^{\ast}) \leq \sup_{\omega \in Y(x)\cap\mathbb{B}_{\varepsilon_{0}}(y^{\ast})} f(x, \omega) - \frac{\gamma_{2}}{2}\|x - x^{\ast}\|^{2},
		\end{align*}
		which indicates that $(x^{\ast}, y^{\ast})$ is a local minimax point of Problem~\eqref{realPro}.
	\end{corollary}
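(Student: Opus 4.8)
The plan is to read this Corollary as the specialization of Theorem~\ref{Second-order theorem} to the quadratic data $f(z)=\tfrac12 z^{T}Gz+c^{T}z$, $s(z)=Dz+h$ with no equality constraints (so $m_{l}=0$ and the multiplier $\mu$ together with every $l$-block is absent). Accordingly, I would verify that the three groups of hypotheses required by Theorem~\ref{Second-order theorem} --- the Jacobian uniqueness conditions of $(P_{x^{\ast}})$ at $(y^{\ast},\lambda^{\ast})$, the stationarity $\nabla_{x}\mathcal{L}(x^{\ast};y^{\ast},\lambda^{\ast})=0$, and the positive definiteness of the reduced Hessian $\nabla_{xx}^{2}\mathcal{L}-R(x^{\ast})K(x^{\ast})^{-1}M(x^{\ast})$ --- are each implied by the five displayed hypotheses. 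Once this matching is established, the two-sided inequality and hence the local minimax property follow immediately from Theorem~\ref{Second-order theorem}.

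First I would dispatch the Jacobian uniqueness conditions. Writing $\mathcal{L}(x;y,\lambda)=f(x,y)+\lambda^{T}s(x,y)$, one has $\nabla_{z}\mathcal{L}=Gz+c+D^{T}\lambda$, so the stationarity part of the first hypothesis yields both $\nabla_{y}\mathcal{L}(x^{\ast};y^{\ast},\lambda^{\ast})=0$ (the KKT stationarity of $(P_{x^{\ast}})$) and $\nabla_{x}\mathcal{L}(x^{\ast};y^{\ast},\lambda^{\ast})=0$ (the extra hypothesis of Theorem~\ref{Second-order theorem}); together with $0\ge\lambda^{\ast}\perp Dz^{\ast}+h\le0$ this makes $(y^{\ast},\lambda^{\ast})$ a KKT point. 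Since $\nabla_{y}s_{i}(x^{\ast},y^{\ast})=B_{i}^{T}$, the linear independence of $\{B_{i}:i\in I_{x^{\ast}}(y^{\ast})\}$ is exactly LICQ at $y^{\ast}$; the strict complementarity condition is the third hypothesis; and the second-order condition for $(P_{x^{\ast}})$ holds automatically, because $\nabla_{yy}^{2}\mathcal{L}=G_{22}\prec0$ gives $\langle G_{22}d_{y},d_{y}\rangle<0$ for every nonzero $d_{y}$, in particular on the critical cone. Thus the Jacobian uniqueness conditions and the $x$-stationarity are all in hand.

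The crux is to show the reduced Hessian of Theorem~\ref{Second-order theorem} collapses to the explicit Schur complement in the fourth hypothesis. I would instantiate the blocks \eqref{KRM} with the quadratic data: $\nabla_{yy}^{2}\mathcal{L}=G_{22}$, $\mathcal{J}_{y}s=B$, $\mathcal{J}_{x}s=A$, $\nabla_{xy}^{2}\mathcal{L}=G_{12}$, $\nabla_{xx}^{2}\mathcal{L}=G_{11}$. The key simplification is strict complementarity: every component $s_{i}(x^{\ast},y^{\ast})+\lambda_{i}^{\ast}\ne0$, so $\Pi_{\mathbb{R}_{+}^{m_{s}}}$ is differentiable at $s(x^{\ast},y^{\ast})+\lambda^{\ast}$ and its Jacobian is the diagonal $0$--$1$ matrix $P$ equal to $1$ exactly on the active set $I:=I_{x^{\ast}}(y^{\ast})$ and $0$ on the inactive indices. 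Ordering the constraints as active/inactive, the inactive $\lambda$-block of $K(x^{\ast})$ decouples --- those rows reduce to the identity and drop out of $K(x^{\ast})^{-1}M(x^{\ast})$ and of $R(x^{\ast})K(x^{\ast})^{-1}$ --- leaving only the bordered active matrix $\begin{bmatrix}G_{22}&-B_{I}^{T}\\ -B_{I}&0\end{bmatrix}$ together with $\begin{bmatrix}G_{12}&-A_{I}^{T}\end{bmatrix}$ and its transpose. A symmetric sign change (congruence by $\mathrm{diag}(I,-I)$ on the active block) converts these into the sign pattern displayed in the Corollary, giving
\[
R(x^{\ast})K(x^{\ast})^{-1}M(x^{\ast})=\begin{bmatrix}G_{12}&A_{I}^{T}\end{bmatrix}\begin{bmatrix}G_{22}&B_{I}^{T}\\ B_{I}&0\end{bmatrix}^{-1}\begin{bmatrix}G_{12}^{T}\\ A_{I}\end{bmatrix}.
\]
Here the bordered matrix is invertible because $G_{22}\prec0$ is nonsingular and $\{B_{i}:i\in I\}$ is linearly independent, which is precisely what makes the Schur complement in the fourth hypothesis well defined; consequently $\nabla_{xx}^{2}\mathcal{L}-R(x^{\ast})K(x^{\ast})^{-1}M(x^{\ast})$ equals the matrix there, which is assumed positive definite.

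I expect the main obstacle to be exactly this block reduction: correctly evaluating the generalized Jacobian of $\Pi_{\mathbb{R}_{+}^{m_{s}}}$ under the stated multiplier sign convention, verifying that the inactive indices genuinely decouple in $K(x^{\ast})^{-1}$, and tracking the sign/congruence bookkeeping so that the resulting Schur complement matches the exact $\begin{bmatrix}G_{22}&B_{I}^{T}\\ B_{I}&0\end{bmatrix}$ pattern in the statement. With the reduced Hessian identified and all remaining hypotheses of Theorem~\ref{Second-order theorem} verified, the conclusion follows directly by that theorem.
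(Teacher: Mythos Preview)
Your approach is exactly what the paper intends: the Corollary is stated without proof precisely because it is the direct specialization of Theorem~\ref{Second-order theorem} to the quadratic data $f(z)=\tfrac12 z^{T}Gz+c^{T}z$, $s(z)=Dz+h$ with $m_{l}=0$, and your verification of the Jacobian uniqueness conditions, the $x$-stationarity, and the reduction of $\nabla_{xx}^{2}\mathcal{L}-R K^{-1}M$ to the displayed Schur complement is the right way to carry this out. The only place to be careful is the projection-Jacobian bookkeeping you flag yourself: with the paper's sign convention $\lambda^{\ast}\le 0$ and $\lambda^{\ast}+s(z^{\ast})<0$, the intended diagonal selector (identity on active indices, zero on inactive) is what makes the inactive block decouple and yields the bordered matrix $\begin{bmatrix}G_{22}&B_{I}^{T}\\ B_{I}&0\end{bmatrix}$, exactly as you compute.
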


	\section{Basic method}\label{section3}
	The dual algorithm presented in the next section belongs to the class of \emph{active set methods}. 
	An \emph{active set} refers to a subset of the $m$ constraints in Problem~\eqref{realPro} that are satisfied as equalities at the current estimate $z$ of the solution to Problem~\eqref{realPro}. 
	We denote by $[m] = \{1, 2, \ldots, m\}$ the index set of all constraints, and by $\alpha \subseteq [m]$ the index set corresponding to the currently active constraints.
	
 	\begin{definition}
		Let $J \subseteq [m]$ be a set of indices. 
		A problem is said to be a \emph{subproblem} $P(J)$ of Problem~\eqref{realPro} if it takes the following form:
		\begin{align}
			P(J)\left\lbrace\begin{aligned}
				\min_{x}\max_{y}\quad& f(z) = \frac{1}{2}z^{T}Gz + c^{T}z,\\
				{\rm s.t.}\quad& s_{J}(z) = D_{J}z + h_{J} \leq 0.
			\end{aligned}\right.
		\end{align}
	\end{definition}
	
	Consider the problem $P([m])$. 
	Suppose that $P([m])$ admits a local minimax point satisfying Corollary~\ref{corollary}, denoted by $z^{\ast}$. 
	We then define
	\begin{align*}
		\alpha^{\ast} = \left\lbrace i\in [m] \mid D_{i}z^{\ast} + h_{i} = 0\right\rbrace.
	\end{align*}
	Then $z^{\ast}$ is a local minimax point of the following problem subject to equality constraints:
	\begin{align}\label{equality}
		\begin{aligned}
			\min_{x\in\mathbb{R}^{n_{x}}}\max_{y\in\mathbb{R}^{n_{y}}}\quad& \frac{1}{2}z^{T}Gz + c^{T}z,\\
			{\rm s.t.}\quad & D_{\alpha^{\ast}}z + h_{\alpha^{\ast}} = 0.
		\end{aligned}
	\end{align}
	and there exists $\lambda^{\ast}\in\mathbb{R}^{m}$ satisfying 
	\begin{align}\label{S-pair1}
		\left\lbrace\begin{aligned}
			& Gz^{\ast} + c + \sum_{i \in \alpha^{\ast}}\lambda_{i}^{\ast}D_{i}^{T} = 0,\\
			& D_{\alpha^{\ast}}z^{\ast} + h_{\alpha^{\ast}} = 0,\\
			& \lambda_{i}^{\ast} \leq 0, i \in \alpha^{\ast},\\
			& \lambda_{j}^{\ast} = 0, i \in [m]\backslash\alpha^{\ast},
		\end{aligned}\right.
	\end{align}
	and
	\begin{align}\label{S-pair2}
		G_{11} - \begin{bmatrix}
			G_{12} & A_{\alpha^{\ast}}^{T}
		\end{bmatrix} \begin{bmatrix}
		G_{22} & B_{\alpha^{\ast}}^{T}\\
		B_{\alpha^{\ast}} & 0
		\end{bmatrix}^{-1} \begin{bmatrix}
		G_{12}^{T}\\
		A_{\alpha^{\ast}}
		\end{bmatrix} \succ 0.
	\end{align}
	For convenient, we define
	\begin{align}
		\Gamma_{\alpha} = G_{11} - \begin{bmatrix}
			G_{12} & A_{\alpha}^{T}
		\end{bmatrix} \begin{bmatrix}
			G_{22} & B_{\alpha}^{T}\\
			B_{\alpha} & 0
		\end{bmatrix}^{-1} \begin{bmatrix}
			G_{12}^{T}\\
			A_{\alpha}
		\end{bmatrix}.
	\end{align}
	
	We have that $z^{\ast}$ is a local minimax point of both Problem~$P([m])$ and Problem~\eqref{equality}. 
	This observation implies that solving Problem~\eqref{equality} yields a local minimax point of $P([m])$. 
	Dai and Zhang~\cite{daiRateConvergenceAugmented2024} proposed an effective algorithm for solving minimax problems with equality constraints. 
	By employing the active set idea, an inequality constrained problem can be transformed into an equality constrained one. 
	To characterize the pair $(z^{\ast}, \alpha^{\ast})$, we introduce the following definition.
	
	\begin{definition}\label{definition of S-pair}
		A pair $(z, \alpha)$, consisting of a point $z$ and an index set $\alpha \subseteq J$, 
		is said to be an \emph{S-solution pair} of the subproblem $P(J)$ if and only if 
		$z$ is an optimal solution of $P(J)$ satisfying \eqref{S-pair1} and \eqref{S-pair2}, 
		$\alpha = \{\, i \in J \mid D_i z + h_i = 0 \,\}$, 
		and the set $\{\, D_i \mid i \in \alpha \,\}$ is linearly independent.
	\end{definition}
	
	Each S-pair $(z, \alpha)$ corresponds to a local minimax point of a subproblem $P(J)$ satisfying $D_{J}z + h_{J} \leq 0$. 
	This observation indicates that our goal is to find an S-pair $(z^{\ast}, \alpha^{\ast})$ associated with the subproblem $P([m])$. 
	We therefore outline below the basic framework for solving Problem~\eqref{realPro}.
	
	\begin{algorithm}{Basic method:}\label{Basic Method}
		\begin{enumerate}
			\item[Step 0] Assume that some S-pair $(z, \alpha)$ is given.
			\item[Step 1] Repeat until all constraints are satisfied:
			\begin{enumerate}
				\item Choose a violated constraint $p \in [m] \backslash\alpha$.
				\item If $P(\alpha\cup\left\lbrace p\right\rbrace)$ is infeasible. STOP-Problem is infeasible.
				\item Else, obtain a new S-pair $(\bar{z}, \bar{\alpha}\cup\left\lbrace p\right\rbrace)$ where $\bar{\alpha} \subseteq \alpha$ and $f(\bar{z}) < f(z)$ and set $(z, \alpha) \leftarrow (\bar{z}, \bar{\alpha}\cup\left\lbrace p\right\rbrace)$.
			\end{enumerate}
			\item[Step 2] STOP-$z$ is the optimal solution for Problem.
		\end{enumerate}
	\end{algorithm}
	
	For the unconstrained minimax subproblem $P(\emptyset)$, the solution 
	$z_{0} = -G^{-1}c$ is available, and thus Algorithm~\ref{Basic Method} can always be initialized with the S-pair $(z_{0}, \emptyset)$.
	
	In the next section, we present a dual algorithm that implements the basic method described above. 
	In Step~1(c), a new S-pair $(\bar{z}, \bar{\alpha} \cup \{ p \})$ is obtained, where we note that $\bar{z}$ may not be a local minimax point of the subproblem $P(\alpha \cup \{ p \})$.
	
	To describe the algorithm precisely, we first introduce some notation. 
	The matrix of normal vectors associated with the active constraints indexed by $\alpha$ is denoted by $N_{\alpha}$, that is, $N_{\alpha} = D_{\alpha}^{T}$. 
	We denote by $\alpha_{+}$ the set $\alpha \cup \{ p \}$, where $p \in [m] \setminus \alpha$, 
	and by $\alpha_{-}$ the set $\alpha \setminus \{ k \}$, where $k \in \alpha$. 
	For convenience, we always use $p$ to denote the index of the constraint to be added to the active set $\alpha$, 
	and $k$ to denote the index of the constraint to be dropped from $\alpha$. 
	We use $n^{+}$ to denote the normal vector $n_{p}$ added to $N_{\alpha}$ to form $N_{\alpha_{+}}$, 
	and $n^{-}$ to denote the column removed from $N_{\alpha}$ to form $N_{\alpha_{-}}$. 
	The symbol $I$ denotes the identity matrix, and $e_{j}$ the $j$th column of $I$.
	
	When the columns of $N_{\alpha}$ are linearly independent, we define the operators
	\begin{align}\label{definition of Nstar}
		N_{\alpha}^{\ast} &= (N_{\alpha}^{T}G^{-1}N_{\alpha})^{-1}N_{\alpha}^{T}G^{-1},
	\end{align}
	and
	\begin{align}\label{definition of H}
		H_{\alpha} &= G^{-1}(I - N_{\alpha}N_{\alpha}^{\ast}) 
		= G^{-1} - G^{-1}N_{\alpha}(N_{\alpha}^{T}G^{-1}N_{\alpha})^{-1}N_{\alpha}^{T}G^{-1}.
	\end{align}
	Here, $N_{\alpha}^{\ast}$ is the so-called pseudo-inverse (or Moore-Penrose generalized inverse) of $N_{\alpha}$. 
	Using $H_{\alpha}$ and $N_{\alpha}^{\ast}$, we next establish several conditions for identifying a local minimax point, 
	which differ from those given in Corollary~\ref{corollary}.
	
	\begin{lemma}\label{lemma equality}
		Consider the problem:
		\begin{align}\label{Section3-1}
			\left\lbrace
			\begin{aligned}
				\min_{x \in \mathbb{R}^{n_x}} \max_{y \in \mathbb{R}^{n_y}} \quad & f(z) = \tfrac{1}{2} z^{T} G z + c^{T} z, \\
				\text{s.t.} \quad & s(z) = N_{\alpha}^{T} z + h_{\alpha} = 0,
			\end{aligned}
			\right.
		\end{align}
		where $B_{\alpha}$ is a row full-rank matrix, $G_{22}$ is negative definite, and $\Gamma_{\alpha}$ is positive definite.  
		If $\hat{z} \in \mathcal{M}_{\alpha} := \{ z \mid N_{\alpha}^{T} z + h_{\alpha} = 0 \}$, then the local minimax point is attained at
		\begin{align*}
		\bar{z} = \hat{z} - H_{\alpha} g(\hat{z}),
		\end{align*}
		where $g(\hat{z}) = G \hat{z} + c$.
	\end{lemma}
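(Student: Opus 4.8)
The plan is to verify directly that $\bar{z} = \hat{z} - H_{\alpha} g(\hat{z})$ satisfies the optimality conditions for the equality-constrained minimax problem \eqref{Section3-1}, understood as the specialization of Corollary~\ref{corollary} to the case in which every constraint indexed by $\alpha$ is active: namely (i) primal feasibility $N_{\alpha}^{T}\bar{z} + h_{\alpha} = 0$, (ii) first-order stationarity $G\bar{z} + c + N_{\alpha}\bar{\lambda} = 0$ for some multiplier $\bar{\lambda}$, and (iii) the second-order conditions $G_{22} \prec 0$, $\Gamma_{\alpha} \succ 0$ together with $B_{\alpha}$ of full row rank. Conditions (iii) are exactly the standing hypotheses of the lemma and hold independently of $\bar{z}$, so the work reduces to checking (i) and (ii).

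The computations rest on two elementary identities obtained directly from \eqref{definition of Nstar} and \eqref{definition of H}: the annihilation property $N_{\alpha}^{T}H_{\alpha} = 0$ and the relation $G H_{\alpha} = I - N_{\alpha}N_{\alpha}^{\ast}$. The first follows from $N_{\alpha}^{T}G^{-1}N_{\alpha}N_{\alpha}^{\ast} = N_{\alpha}^{T}G^{-1}$, which uses $N_{\alpha}^{\ast} = (N_{\alpha}^{T}G^{-1}N_{\alpha})^{-1}N_{\alpha}^{T}G^{-1}$; the second is immediate since the leading $G$ cancels the $G^{-1}$ in $H_{\alpha}$. For feasibility I would compute
\[
N_{\alpha}^{T}\bar{z} + h_{\alpha} = N_{\alpha}^{T}\hat{z} - (N_{\alpha}^{T}H_{\alpha})\,g(\hat{z}) + h_{\alpha} = N_{\alpha}^{T}\hat{z} + h_{\alpha} = 0,
\]
where the middle term vanishes by $N_{\alpha}^{T}H_{\alpha} = 0$ and the last equality uses $\hat{z} \in \mathcal{M}_{\alpha}$. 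For stationarity I would expand
\[
g(\bar{z}) = G\bar{z} + c = g(\hat{z}) - (G H_{\alpha})\,g(\hat{z}) = g(\hat{z}) - (I - N_{\alpha}N_{\alpha}^{\ast})\,g(\hat{z}) = N_{\alpha}\big( N_{\alpha}^{\ast} g(\hat{z}) \big),
\]
so that, taking $\bar{\lambda} = -N_{\alpha}^{\ast} g(\hat{z})$, we obtain $G\bar{z} + c + N_{\alpha}\bar{\lambda} = 0$. This exhibits $\bar{z}$ as a KKT point with an explicit multiplier and completes (i)--(ii).

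With (i)--(iii) in hand, I would invoke the equality-constrained counterpart of Corollary~\ref{corollary} to conclude that $\bar{z}$ is a local minimax point of \eqref{Section3-1}. The main obstacle is precisely this last invocation: Corollary~\ref{corollary} is stated for inequality constraints and carries strict-complementarity and sign requirements on the multiplier, whereas here all constraints are equalities and $\bar{\lambda}$ is unrestricted. I would bridge this gap through the reduction underlying the Schur-complement form of $\Gamma_{\alpha}$: for fixed $x$ the inner problem $\max_{y} f(x,y)$ subject to $A_{\alpha}x + B_{\alpha}y + h_{\alpha} = 0$ is a strictly concave maximization over an affine subspace, with strict concavity from $G_{22} \prec 0$ and the subspace nonempty and of correct dimension because $B_{\alpha}$ has full row rank, hence it has a unique maximizer $y(x)$; substituting back yields a quadratic outer objective in $x$ whose Hessian is exactly $\Gamma_{\alpha}$, and $\Gamma_{\alpha} \succ 0$ renders the outer problem a strictly convex minimization with a unique minimizer. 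The coordinates of that unique saddle point are characterized by exactly the linear system (i)--(ii), whose solution is unique because the hypotheses guarantee invertibility of the associated KKT matrix. This both certifies that $\bar{z}$ is a local minimax point and justifies the definite article in the statement.
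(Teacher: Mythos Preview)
The paper states Lemma~\ref{lemma equality} without proof, so there is no authorial argument to compare against; your proposal supplies exactly the verification the paper omits. Your computations for feasibility and stationarity are correct and rest on the identities $N_{\alpha}^{T}H_{\alpha}=0$ and $GH_{\alpha}=I-N_{\alpha}N_{\alpha}^{\ast}$, both of which are recorded (or immediate from) Proposition~\ref{Proportion of H}. The one genuine subtlety you identify --- that Corollary~\ref{corollary} is written for inequality constraints with sign and strict-complementarity requirements that do not apply here --- is real, and your workaround via the Schur-complement reduction (eliminating $y$ against the equality constraints to obtain an outer quadratic in $x$ with Hessian $\Gamma_{\alpha}$) is the standard and correct way to close it. That reduction also delivers uniqueness of the minimax point, justifying the definite article in the statement. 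Your argument is complete.
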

	
	From Lemma~\ref{lemma equality}, if $\hat{z}$ is the optimal solution of Problem~\eqref{Section3-1}, we have
	\begin{align*}
		H_{\alpha} g(\hat{z}) = 0. \label{Halpha_condition}
	\end{align*}
	
	Next, we consider the dual problem corresponding to Problem~\eqref{realPro}.  
	The Lagrangian function for Problem~\eqref{realPro} is defined as
	\begin{align}
		\mathcal{L}(z, \mu) = f(z) + \mu^{T} s(z),
	\end{align}
	where $\mu$ denotes the Lagrange multiplier.
	
	Tsaknakis et al.~\cite{tsaknakisMinimaxProblemsCoupled2023} developed the duality theorem for minimax problems.  
	For Problem~\eqref{a1.1}, if the objective function $f(x, y)$ is strongly concave in $y$ for every $x$, then
	\begin{align}
		\min_{x} \max_{y} \min_{\mu \leq 0} \mathcal{L}(x, y, \mu)
		= \min_{\mu \leq 0} \min_{x} \max_{y} \mathcal{L}(x, y, \mu).
	\end{align}
	This implies that if the dual problem is well-defined, the associated multipliers satisfy $\mu \leq 0$.
	
	From the definitions of $N_{\alpha}^{\ast}$ and $H_{\alpha}$ and Corollary~\ref{corollary}, we can derive
	\begin{align*}
		\mu = - N_{\alpha}^{\ast} g(\bar{z})
	\end{align*}
	for Problem~\eqref{Section3-1} in Lemma~\ref{lemma equality}.  
	Accordingly, we define
	\begin{align*}
		u(\hat{z}) := - N_{\alpha}^{\ast} g(\hat{z})
	\end{align*}
	to denote the dual variable of Problem~\eqref{realPro}.  
	
	Hence, the conditions
	\begin{align}
		H_{\alpha} g(\hat{z}) = 0, \qquad u(\hat{z}) \leq 0,
	\end{align}
	are necessary for $\hat{z}$ to be the optimal solution of Problem~\eqref{Section3-1}.
	
	As the operators $N_{\alpha}^{\ast}$ and $H_{\alpha}$ play a fundamental role in the dual algorithm described in the next section, we now present several of their key properties that will be useful later.
	\begin{proportion}\label{Proportion of H}
		\begin{enumerate}
			\item $N_{\alpha}^{\ast}N_{\alpha} = I$,
			\item $H_{\alpha}$ is a symmetric matrix,
			\item $H_{\alpha}N_{\alpha} = 0$,
			\item $H_{\alpha}GH_{\alpha} = H_{\alpha}$,
			\item $N_{\alpha}^{\ast}GH_{\alpha} = 0$,
			\item $H_{\alpha_{+}}GH_{\alpha} = H_{\alpha_{+}}$.
		\end{enumerate}
	\end{proportion}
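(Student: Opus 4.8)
The plan is to reduce all six identities to two elementary facts. The first is the cancellation identity $(N_{\alpha}^{T}G^{-1}N_{\alpha})^{-1}(N_{\alpha}^{T}G^{-1}N_{\alpha}) = I$, which is legitimate because $N_{\alpha}$ has full column rank and $G$ is invertible, so the Gram-type matrix $N_{\alpha}^{T}G^{-1}N_{\alpha}$ is nonsingular. The second is the factored form
\[
G H_{\alpha} = I - N_{\alpha} N_{\alpha}^{\ast},
\]
which follows immediately from the definition \eqref{definition of H} upon left-multiplying by $G$. This factorization is the workhorse for items (4)--(6), so I would record it first.

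For (1) I would substitute the definition \eqref{definition of Nstar} directly and cancel: $N_{\alpha}^{\ast} N_{\alpha} = (N_{\alpha}^{T}G^{-1}N_{\alpha})^{-1}(N_{\alpha}^{T}G^{-1}N_{\alpha}) = I$. For (3) I would expand $H_{\alpha}N_{\alpha}$ using the second expression in \eqref{definition of H} and apply the same cancellation, leaving $G^{-1}N_{\alpha} - G^{-1}N_{\alpha} = 0$. For (2) I would use that $G$, and hence $G^{-1}$, is symmetric (since $G_{11}$, $G_{22}$ are symmetric); consequently $N_{\alpha}^{T}G^{-1}N_{\alpha}$ is symmetric and so is its inverse, and transposing the second expression in \eqref{definition of H} reproduces $H_{\alpha}$ term by term.

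Items (4), (5), (6) then all follow by left- or right-multiplying the factorization $G H_{\alpha} = I - N_{\alpha}N_{\alpha}^{\ast}$ and invoking (1) and (3). Concretely, $H_{\alpha}GH_{\alpha} = H_{\alpha}(I - N_{\alpha}N_{\alpha}^{\ast}) = H_{\alpha} - (H_{\alpha}N_{\alpha})N_{\alpha}^{\ast} = H_{\alpha}$ by (3); next, $N_{\alpha}^{\ast}GH_{\alpha} = N_{\alpha}^{\ast}(I - N_{\alpha}N_{\alpha}^{\ast}) = N_{\alpha}^{\ast} - (N_{\alpha}^{\ast}N_{\alpha})N_{\alpha}^{\ast} = 0$ by (1); and finally $H_{\alpha_{+}}GH_{\alpha} = H_{\alpha_{+}}(I - N_{\alpha}N_{\alpha}^{\ast}) = H_{\alpha_{+}} - (H_{\alpha_{+}}N_{\alpha})N_{\alpha}^{\ast}$.

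The only step requiring a moment of thought is (6): to close it I would observe that the columns of $N_{\alpha}$ form a subset of the columns of $N_{\alpha_{+}} = [\,N_{\alpha}\ n^{+}\,]$, so applying (3) to the larger index set gives $H_{\alpha_{+}}N_{\alpha_{+}} = 0$ and therefore $H_{\alpha_{+}}N_{\alpha} = 0$; the remaining term then vanishes and $H_{\alpha_{+}}GH_{\alpha} = H_{\alpha_{+}}$. None of these computations is a genuine obstacle — the content is bookkeeping built on the single factorization together with (1) and (3) — but the argument does tacitly rely on $G$ being invertible and on $N_{\alpha_{+}}$ retaining full column rank, so that $N_{\alpha_{+}}^{\ast}$ and $H_{\alpha_{+}}$ are defined at all; I would note that this holds under the standing linear-independence assumption on the active normals used throughout.
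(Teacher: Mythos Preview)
Your verification is correct and is exactly the kind of routine check the paper omits: the proposition is stated in the paper without proof, so there is no authors' argument to compare against. Your organizing observation $GH_{\alpha} = I - N_{\alpha}N_{\alpha}^{\ast}$ is the right engine for items (4)--(6), and the reduction of (6) to $H_{\alpha_{+}}N_{\alpha}=0$ via $H_{\alpha_{+}}N_{\alpha_{+}}=0$ is the natural step.

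One small imprecision worth tightening: you justify the invertibility of $N_{\alpha}^{T}G^{-1}N_{\alpha}$ by saying ``$N_{\alpha}$ has full column rank and $G$ is invertible.'' That inference is not valid in general when $G$ is indefinite (take $G^{-1}=\operatorname{diag}(1,-1)$ and $N_{\alpha}=(1,1)^{T}$). In the paper's setting the invertibility actually comes from the standing condition $\Gamma_{\alpha}\succ 0$, which Lemma~\ref{lemma positive} shows is equivalent to $N_{\alpha}^{T}G^{-1}N_{\alpha}\prec 0$; alternatively one may simply take the existence of $(N_{\alpha}^{T}G^{-1}N_{\alpha})^{-1}$ as part of the definition \eqref{definition of Nstar}--\eqref{definition of H}. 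Either way the algebraic identities you prove are unaffected, since they only use that the inverse exists.
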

	
	\section{Dual algorithm}\label{section4}
	The algorithm presented in this section is based on the Basic Method~\ref{Basic Method} and employs the operators $H$ and $N^{\ast}$ defined in the previous section.  
	Details regarding its numerically stable implementation will be discussed in the next section.
	\begin{algorithm}{Dual algorithm:}\label{dual algorithm}
		\begin{enumerate}
			\item[Step 0] Find the unconstrained minimax point and preprocess problem:
			\begin{enumerate}
				\item Set $z\leftarrow -G^{-1}c, f\leftarrow\frac{1}{2}c^{T}z,H\leftarrow G^{-1},\alpha\leftarrow\emptyset,q\leftarrow0$.
				\item Compute $n_{i}^{T}Hn_{i}$ for all $i \in [m]$. Set $K \leftarrow \left\lbrace i\in[m] \mid n_{i}^{T}Hn_{i} < 0\right\rbrace$.
			\end{enumerate}
			\item[Step 1] Choose a violated constraint, if any:
			
			Compute $s_{j}(z)$, for all $j\in K\backslash\alpha$.\\
			If $V = \left\lbrace j\in K\backslash\alpha \mid s_{j}(z)>0\right\rbrace=\emptyset$, STOP. The current solution $z$ is both feasible and optimal;\\
			otherwise, choose $p\in V$ and set $n^{+} \leftarrow n_{p}$ and $u^{+} \leftarrow \begin{bmatrix}
				u\\0
			\end{bmatrix}$.
			
			If $q = 0$, set $u^{+}\leftarrow0$. ($\alpha_{+} = \alpha\cup\left\lbrace p\right\rbrace $)
			
			\item[Step 2] Check for feasibility and determine a new S-pair:
			
			\begin{enumerate}
				\item Determine step direction
				
				Compute $d = Hn^{+}$ (the step direction in the primal space) and if $q>0, r = -N^{\ast}n^{+}$ (the negative of the step direction in the dual space).
				
				\item Compute step length
				
				\begin{enumerate}
					\item Dual step length $t_{1}$; (maximum step in the dual space without violating dual feasibility).
					\begin{align*}
						t_{1} \leftarrow \min\left\lbrace\min_{\substack{j \in\alpha\\
								r_{j}<0}}\left\lbrace \frac{u_{j}^{+}}{r_{j}}\right\rbrace, \infty\right\rbrace,
					\end{align*}
					\item Primal step length $t_{2}$; (minimum step in primal space such that the $p$th constraint becomes feasible).
					\begin{align*}
						t_{2}\leftarrow\left\lbrace\begin{aligned}
							&\infty, &\text{if }d^{T}n^{+} \geq 0,\\
							&-\frac{s_{p}(z)}{d^{T}n^{+}}, &\text{otherwise}.
						\end{aligned}\right.
					\end{align*}
					\item Step length $t$:
					\begin{align*}
						t \leftarrow\min\left\lbrace t_{1}, t_{2}\right\rbrace.
					\end{align*}
				\end{enumerate}
				\item Determine new S-pair and take step:
				\begin{enumerate}
					\item No step in primal or dual space:
					
					If $t = \infty$, STOP, subproblem $P(\alpha_{+})$ and Problem~\eqref{realPro} are infeasible.
					
					\item Step in dual space:
					
					If $t_{2} = \infty$, then set $u^{+} \leftarrow u^{+} - t \begin{bmatrix}
						r\\
						1
					\end{bmatrix}$ and drop constraint $k$; namely, set $\alpha \leftarrow \alpha\backslash\left\lbrace k\right\rbrace$, $q \leftarrow q - 1$, update $H$ and $N^{\ast}$ and go to Step2(a)
					
					\item Step in primal and dual space:
				
					Set
					\begin{align*}
						\left\lbrace\begin{aligned}
							&z \leftarrow z + td,\\
							&f \leftarrow f + d^{T} n^{+} \left(\frac{1}{2}t - u_{p}^{+}\right),\\
							&u^{+} \leftarrow u^{+} - t\begin{bmatrix}
								r\\1
							\end{bmatrix} .
						\end{aligned}\right.
					\end{align*}
				\end{enumerate}
				If $t = t_{2}$ (full step), set $u\leftarrow u^{+},\alpha\leftarrow\alpha\cup\left\lbrace p\right\rbrace,q\leftarrow q + 1$, update $H,N^{\ast}$, and go to Step1.
				
				If $t = t_{1}$ (partial step), drop constraint $k$, set $\alpha\leftarrow\alpha\backslash\left\lbrace k\right\rbrace, q\leftarrow q-1$, update $H,N^{\ast}$, and go to Step2(a).
				\end{enumerate}
		\end{enumerate}
	\end{algorithm}
	
	The core of the dual algorithm lies in determining a new S-pair $(\bar{z}, \bar{\alpha} \cup \{ p \})$ in Step~2, given an existing S-pair $(z, \alpha)$ and a violated constraint $p$.  
	Unlike in the standard quadratic minimization problem, for minimax problems one can easily construct examples whose feasible regions are nonempty but for which no local minimax point exists, even in the convex–concave case.  
	As we shall see shortly, the following assumption is useful.
	
	\begin{assumption}\label{assumption}
		If Problem~\eqref{realPro} admits a local minimax point, then every subproblem $P(J)$ of Problem~\eqref{realPro}, where $J \subseteq K$, also admits a local minimax point.
	\end{assumption}
	
	It is evident that Assumption~\ref{assumption} is difficult to verify directly for Problem~\eqref{realPro}.  
	Therefore, we propose the following assumption, which is easier to check in practice.
	
	\begin{assumption}\label{assumption2}
		For Problem~\eqref{realPro}, the matrix $D G^{-1} D^{T}$ is negative semi-definite.
	\end{assumption}
	
	If Problem~\eqref{realPro} satisfies Assumption~\ref{assumption2}, then it necessarily satisfies Assumption~\ref{assumption}.  
	To illustrate the relationship between an S-pair $(z, \alpha)$ and a violated constraint $p$, we introduce the following definition.
	
	\begin{definition}\label{V-triple}
		A triple $(z, \alpha, p)$ consisting of a point $z$ and a set of indices $\alpha^{+} = \alpha \cup \{ p \}$, where $p \in K \setminus \alpha$, is said to be a \emph{V-triple} (violated triple) if the columns of $B_{\alpha}^{+}$ are linearly independent and the following conditions hold:
		\begin{enumerate}
			\item $s_{i}(z) = 0$ for all $i \in \alpha$ and $s_{p}(z) > 0$;
			\item $H_{\alpha_{+}} g(z) = 0$;
			\item $u_{\alpha_{+}} = - N_{\alpha_{+}}^{\ast} g(z) \le 0$;
			\item $\Gamma_{\alpha} \succ 0$.
		\end{enumerate}
	\end{definition}
	
	For an S-pair $(z, \alpha)$, we have $H_{\alpha} g(z) = 0$.  
	Using Proposition~\ref{Proportion of H}, it follows that
	\begin{align*}
	H_{\alpha_{+}} g(z) = H_{\alpha_{+}} G H_{\alpha} g(z) = 0.
	\end{align*}
	The main difference between a V-triple and an S-pair lies in the value of $s_{p}(z)$.  
	The key question is how to move from a point $z$ corresponding to a V-triple $(z, \alpha, p)$ to a new point while maintaining all the other conditions except for $s_{p}(z)$.  
	We now establish a lemma to clarify this process.
	
	\begin{lemma}\label{lemma of z}
		Let $(z, \alpha, p)$ be a V-triple and consider points of the form
		\begin{align}\label{definition of z-}
			\bar{z} = z + td,
		\end{align}
		where
		\begin{align}\label{definition of d}
			d = H_{\alpha}n^{+}.
		\end{align}
		Then
		\begin{align}
			& H_{\alpha_{+}} g(\bar{z}) = 0,\\
			& s_{i}(\bar{z}) = 0, \quad \text{for all } i \in \alpha,\\
			& s_{p}(\bar{z}) = s_{p}(z) + td^{T}n^{+},\\
			& u_{\alpha_{+}}(\bar{z}) = -N_{\alpha_{+}}^{\ast} g(\bar{z}) = u_{\alpha_{+}}(z) - t
			\begin{bmatrix}
				r\\
				1
			\end{bmatrix}, \label{definition of u}
		\end{align}
		where
		\begin{align}\label{definition of r}
			r = -N_{\alpha}^{\ast}n^{+},
		\end{align}
		and
		\begin{align}
			f(\bar{z}) = f(z) + td^{T}n^{+}\left(\frac{t}{2} - u_{p}(z)\right).
		\end{align}
	\end{lemma}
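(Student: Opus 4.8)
The plan is to verify the five identities one at a time by substituting $\bar{z} = z + td$ with $d = H_{\alpha}n^{+}$ and reducing everything through Proposition~\ref{Proportion of H} together with the four V-triple conditions of Definition~\ref{V-triple}. The single observation driving all the computations is that, because $g$ is affine, $g(\bar{z}) = G\bar{z} + c = g(z) + tGd = g(z) + tGH_{\alpha}n^{+}$; likewise $s_{i}(\bar{z}) = n_{i}^{T}\bar{z} + h_{i} = s_{i}(z) + t\,n_{i}^{T}d$ for each constraint index $i$.

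First I would dispatch the three feasibility-type identities. For $H_{\alpha_{+}}g(\bar{z}) = 0$, substituting the expression for $g(\bar{z})$ gives $H_{\alpha_{+}}g(\bar{z}) = H_{\alpha_{+}}g(z) + t\,H_{\alpha_{+}}GH_{\alpha}n^{+}$; the first term vanishes by V-triple condition~2, and the second equals $t\,H_{\alpha_{+}}n^{+}$ by property~6, which is zero since $n^{+}$ is a column of $N_{\alpha_{+}}$ and $H_{\alpha_{+}}N_{\alpha_{+}} = 0$ (property~3). For $s_{i}(\bar{z}) = 0$ with $i \in \alpha$, I use $s_{i}(z) = 0$ (condition~1) and note $n_{i}^{T}d = n_{i}^{T}H_{\alpha}n^{+} = 0$, because $n_{i}$ is a column of $N_{\alpha}$ and $N_{\alpha}^{T}H_{\alpha} = 0$ follows from properties~2 and~3. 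The identity $s_{p}(\bar{z}) = s_{p}(z) + t\,d^{T}n^{+}$ is then immediate from $s_{p}(\bar{z}) = s_{p}(z) + t\,(n^{+})^{T}d$.

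The dual-variable update~\eqref{definition of u} is the first genuinely structural step. Writing $GH_{\alpha} = I - N_{\alpha}N_{\alpha}^{\ast}$ from \eqref{definition of H}, I compute $N_{\alpha_{+}}^{\ast}GH_{\alpha}n^{+} = N_{\alpha_{+}}^{\ast}n^{+} - N_{\alpha_{+}}^{\ast}N_{\alpha}N_{\alpha}^{\ast}n^{+}$. Using the block form $N_{\alpha_{+}} = [\,N_{\alpha}\ \ n^{+}\,]$ together with $N_{\alpha_{+}}^{\ast}N_{\alpha_{+}} = I$ (property~1), the vector $N_{\alpha_{+}}^{\ast}n^{+}$ is the last column of the identity while $N_{\alpha_{+}}^{\ast}N_{\alpha}$ is the leading block of its columns; substituting these yields $N_{\alpha_{+}}^{\ast}GH_{\alpha}n^{+} = \begin{bmatrix} -N_{\alpha}^{\ast}n^{+} \\ 1 \end{bmatrix} = \begin{bmatrix} r \\ 1 \end{bmatrix}$. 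Combined with $u_{\alpha_{+}}(\bar{z}) = -N_{\alpha_{+}}^{\ast}g(\bar{z}) = u_{\alpha_{+}}(z) - t\,N_{\alpha_{+}}^{\ast}GH_{\alpha}n^{+}$, this gives exactly \eqref{definition of u}.

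Finally, for the objective value I expand $f(\bar{z}) = f(z) + t\,g(z)^{T}d + \tfrac{t^{2}}{2}\,d^{T}Gd$. The quadratic coefficient simplifies via property~4: $d^{T}Gd = (n^{+})^{T}H_{\alpha}GH_{\alpha}n^{+} = (n^{+})^{T}H_{\alpha}n^{+} = d^{T}n^{+}$. For the linear coefficient I exploit V-triple condition~2 once more: since $H_{\alpha_{+}}g(z) = 0$ and $G$ is invertible, $(I - N_{\alpha_{+}}N_{\alpha_{+}}^{\ast})g(z) = 0$, so $g(z) = -N_{\alpha_{+}}u_{\alpha_{+}}(z) = -N_{\alpha}u_{\alpha}(z) - u_{p}(z)\,n^{+}$. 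Multiplying on the right by $H_{\alpha}n^{+}$ and using $N_{\alpha}^{T}H_{\alpha} = 0$ annihilates the $N_{\alpha}u_{\alpha}(z)$ term, leaving $g(z)^{T}d = -u_{p}(z)\,(n^{+})^{T}H_{\alpha}n^{+} = -u_{p}(z)\,d^{T}n^{+}$. Substituting both coefficients produces $f(\bar{z}) = f(z) + t\,d^{T}n^{+}\!\left(\tfrac{t}{2} - u_{p}(z)\right)$. I expect the main obstacle to be precisely these last two steps—identifying the block structure of $N_{\alpha_{+}}^{\ast}$ in the dual update and extracting the $u_{p}(z)$ term from the representation $g(z) = -N_{\alpha_{+}}u_{\alpha_{+}}(z)$—whereas the feasibility identities are routine linear algebra.
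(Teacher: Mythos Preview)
Your proposal is correct and follows essentially the same route as the paper: both expand $g(\bar{z}) = g(z) + tGd$, use $GH_{\alpha}n^{+} = (I - N_{\alpha}N_{\alpha}^{\ast})n^{+} = N_{\alpha_{+}}\begin{bmatrix} r \\ 1 \end{bmatrix}$ together with Proposition~\ref{Proportion of H} to obtain the feasibility and dual-update identities, and then derive $g(z) = -N_{\alpha_{+}}u_{\alpha_{+}}(z)$ from $H_{\alpha_{+}}g(z) = 0$ to isolate the $u_{p}(z)$ term in the objective expansion. The only cosmetic difference is that you invoke property~6 directly for $H_{\alpha_{+}}GH_{\alpha}n^{+}$, whereas the paper first rewrites $Gd$ as an $N_{\alpha_{+}}$-combination and then applies property~3; the substance is identical.
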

	
	\begin{proof}
		According to Proposition~\ref{Proportion of H} and Definition~\ref{V-triple}, we have
		\begin{align*}
			& g(\bar{z}) = G\bar{z} + c = Gz + c + tGd = g(z) + tGd,\\
			& Gd = GH_{\alpha}n^{+} = (I - N_{\alpha}N_{\alpha}^{\ast})n^{+}
			= N_{\alpha_{+}}
			\begin{bmatrix}
				-N_{\alpha}^{\ast}n^{+}\\
				1
			\end{bmatrix}.
		\end{align*}
		Then
		\begin{align*}
			& H_{\alpha_{+}}g(\bar{z}) = H_{\alpha_{+}}g(z) + tH_{\alpha_{+}}Gd = 0,\\
			& s_{i}(\bar{z}) = s_{i}(z) + tn_{i}^{T}H_{\alpha}n^{+} = 0,
			\quad \text{for all } i \in \alpha,\\
			& u_{\alpha_{+}}(\bar{z}) = -N_{\alpha_{+}}^{\ast}g(\bar{z})
			= -N_{\alpha_{+}}^{\ast}g(z) - tN_{\alpha_{+}}^{\ast}Gd
			= u_{\alpha_{+}}(z) - t
			\begin{bmatrix}
				r\\
				1
			\end{bmatrix}.
		\end{align*}
		From $H_{\alpha_{+}}g(z) = 0$, we obtain
		\begin{align*}
			g(z) = -N_{\alpha_{+}}u_{\alpha_{+}}(z).
		\end{align*}
		Therefore,
		\begin{align*}
			f(\bar{z})
			&= \frac{1}{2}\bar{z}^{T}G\bar{z} + c^{T}\bar{z} \\
			&= f(z) + \frac{1}{2}t^{2}d^{T}Gd + td^{T}g(z) \\
			&= f(z) + \frac{1}{2}t^{2}d^{T}n^{+} - td^{T}N_{\alpha_{+}}u_{\alpha_{+}}(z) \\
			&= f(z) + td^{T}n^{+}\left(\frac{t}{2} - u_{p}(z)\right).
		\end{align*}
	\end{proof}
	
	For a V-triple $(z, \alpha, p)$, if we want to obtain a new S-pair
	$(\bar{z}, \bar{\alpha}\cup\{p\})$, where $\bar{\alpha}$ is a subset of $\alpha$,
	it is necessary to verify whether $\Gamma_{\bar{\alpha}\cup\{p\}}$
	is a positive definite matrix. We now prove a lemma that establishes several
	properties of the matrix $\Gamma_{\alpha}$, which will be useful for
	constructing and analyzing the subsequent algorithm.

	\begin{lemma}\label{lemma positive}
		Let $\Gamma_{\alpha}$ be a positive definite matrix, $G_{22}$ be a negative definite matrix, and $n$ be a vector. Suppose that $H_{\alpha}$ is defined as in~\eqref{definition of H}. Then, the following properties hold:
		\begin{enumerate}
			\item If $\bar{\alpha} \subseteq \alpha$, then
			\begin{align}
				n^{T}H_{\alpha}n \geq n^{T}H_{\bar{\alpha}}n
				\quad\text{and}\quad
				\Gamma_{\bar{\alpha}} \succeq \Gamma_{\alpha} \succ 0.
			\end{align}
			\item If $n^{T}H_{\alpha}n < 0$, then $\Gamma_{\alpha_{+}} \succ 0$.
		\end{enumerate}
	\end{lemma}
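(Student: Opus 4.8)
I would realize both operators as blocks of the inverse of the bordered (KKT) matrix
\[
W_\alpha = \begin{bmatrix} G & N_\alpha \\ N_\alpha^T & 0 \end{bmatrix},
\]
and then control the sign of $n^T H_\alpha n$ through the inertia of $G$ on the active null space. First I would record three facts. (i) Computing the top-left $(n_x+n_y)$ block of $W_\alpha^{-1}$ directly reproduces $H_\alpha$ (matching Proposition~\ref{Proportion of H}), while computing the same inverse by a Schur complement with respect to the trailing block $E_\alpha = \begin{bmatrix} G_{22} & B_\alpha^T \\ B_\alpha & 0 \end{bmatrix}$ shows that $\Gamma_\alpha$ is exactly the Schur complement of $E_\alpha$ in $W_\alpha$ and that $\Gamma_\alpha^{-1} = P^T H_\alpha P$, where $P = [\,I_{n_x};0\,]$ extracts the top-left $n_x\times n_x$ corner. (ii) A rank-one downdate: whenever $(n^+)^T H_\alpha n^+ \neq 0$ and $N_{\alpha_+}$ has full column rank,
\[
H_{\alpha_+} = H_\alpha - \frac{(H_\alpha n^+)(H_\alpha n^+)^T}{(n^+)^T H_\alpha n^+},
\]
obtained from the block inversion of $N_{\alpha_+}^T G^{-1} N_{\alpha_+}$, the crucial point being that the relevant Schur complement equals precisely $(n^+)^T H_\alpha n^+$. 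Applying $P^T(\cdot)P$ yields the companion identity $\Gamma_{\alpha_+}^{-1} = \Gamma_\alpha^{-1} - (P^T H_\alpha n^+)(P^T H_\alpha n^+)^T/\big((n^+)^T H_\alpha n^+\big)$. (iii) Inertia additivity, giving both $\mathrm{In}(W_\alpha) = \mathrm{In}(\Gamma_\alpha) + \mathrm{In}(E_\alpha)$ and $\mathrm{In}(W_\alpha) = \mathrm{In}(G|_{V_\alpha}) + (|\alpha|,|\alpha|,0)$ with $V_\alpha = \ker N_\alpha^T$; using $G_{22}\prec0$ and $B_\alpha$ of full row rank one checks $\mathrm{In}(E_\alpha) = (|\alpha|,n_y,0)$, so that $\Gamma_\alpha\succ0$ is equivalent to $\mathrm{In}(G|_{V_\alpha}) = (n_x, n_y-|\alpha|, 0)$.

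For Claim 2, suppose $\Gamma_\alpha\succ0$ and $(n^+)^T H_\alpha n^+ < 0$. I would first argue that $B_{\alpha_+}$ has full row rank, so that $\Gamma_{\alpha_+}$ is well defined: if instead the $B$-part $b$ of $n^+$ lay in $\mathrm{range}(B_\alpha^T)$, then subtracting a suitable combination $N_\alpha\eta$ (which $H_\alpha$ annihilates by Proposition~\ref{Proportion of H}) replaces $n^+$ by a vector of the form $P\tilde a$, and the identity in (i) gives $(n^+)^T H_\alpha n^+ = \tilde a^T \Gamma_\alpha^{-1}\tilde a \ge 0$, a contradiction. With $\Gamma_{\alpha_+}$ defined, the companion identity in (ii) reads $\Gamma_{\alpha_+}^{-1} = \Gamma_\alpha^{-1} + (P^T H_\alpha n^+)(P^T H_\alpha n^+)^T/|(n^+)^T H_\alpha n^+| \succeq \Gamma_\alpha^{-1}\succ0$, whence $\Gamma_{\alpha_+}\succ0$.

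For Claim 1 it suffices, by induction on $|\alpha|-|\bar\alpha|$, to treat a single removal $\beta \mapsto \beta_- = \beta\setminus\{k\}$ with $\Gamma_\beta\succ0$ and to establish $\Gamma_{\beta_-}\succ0$, $H_{\beta_-}\preceq H_\beta$, and $\Gamma_{\beta_-}\succeq\Gamma_\beta$. Here $V_\beta$ is a hyperplane of $V_{\beta_-}$ cut out by the functional $z\mapsto (n^-)^T z$, whose representative relative to the nonsingular form $G|_{V_{\beta_-}}$ is $w = H_{\beta_-}n^-$, with $w^T G w = (n^-)^T H_{\beta_-}n^- =: c$. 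The elementary rule for how inertia changes under restriction to such a hyperplane, combined with fact (iii) (which fixes $\mathrm{In}(G|_{V_\beta}) = (n_x, n_y-|\beta|,0)$) and the a priori bound that the positive index of $G|_{V_{\beta_-}}$ cannot exceed $n_x$ (the order of $\Gamma_{\beta_-}$), forces $c<0$ and $\mathrm{In}(G|_{V_{\beta_-}}) = (n_x, n_y-|\beta_-|,0)$; the latter gives $\Gamma_{\beta_-}\succ0$. Finally, reading the downdate of (ii) in reverse, $H_{\beta_-} - H_\beta = (H_{\beta_-}n^-)(H_{\beta_-}n^-)^T/c \preceq 0$ since $c<0$, so $H_{\beta_-}\preceq H_\beta$ (hence $n^T H_\beta n \ge n^T H_{\beta_-} n$ for the fixed $n$), and taking the $P$-corner gives $\Gamma_{\beta_-}^{-1}\preceq\Gamma_\beta^{-1}$, i.e. $\Gamma_{\beta_-}\succeq\Gamma_\beta$.

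The main obstacle is pinning down the sign of the scalar $(n^+)^T H_\alpha n^+$ (equivalently $c$): everything else is block algebra, but the sign is exactly where the hypotheses $\Gamma\succ0$ and $G_{22}\prec0$ enter, and it is cleanest to extract it from inertia additivity together with the hyperplane-restriction rule rather than by direct manipulation. A secondary point demanding care is the bookkeeping that $B_{\alpha_+}$ (respectively $B_{\beta_-}$) has full row rank, so that every $\Gamma$ and $E$ occurring is invertible and the Schur and inertia identities legitimately apply.
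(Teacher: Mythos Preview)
Your proof is correct and takes a genuinely different route from the paper's.

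The paper works entirely at the level of explicit factorizations. It first derives, by a Schur complement of $K_{\alpha}$, the rank-one identity
\[
\Gamma_{\alpha}=\Gamma_{\alpha_-}+(B_kH^B_{\alpha_-}B_k^T)^{-1}(\cdots)^T(\cdots),
\]
and reads off $\Gamma_{\alpha_-}\succeq\Gamma_{\alpha}$ from the sign of $B_kH^B_{\alpha_-}B_k^T$. It then proves the equivalence $\Gamma_{\alpha}\succ0\Leftrightarrow N_\alpha^TG^{-1}N_\alpha\prec0$ via the Woodbury identity, establishes $n^TH_{\bar\alpha}n\le n^TH_\alpha n$ by writing out the Cholesky factor of $N_\alpha^TG^{-1}N_\alpha$ blockwise and exhibiting the difference as $-[\,L\;-L_\beta^{-1}\,]^T[\,L\;-L_\beta^{-1}\,]$, and handles Claim~2 by extending the same Cholesky factor by one row/column. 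Your approach instead realizes $H_\alpha$ and $\Gamma_\alpha^{-1}$ as two nested corners of the same inverse KKT matrix $W_\alpha^{-1}$ (the identity $\Gamma_\alpha^{-1}=P^TH_\alpha P$ does not appear in the paper), and then lets the single rank-one downdate of $H$, together with Haynsworth/Gould inertia additivity, do all the work once the sign of the scalar $c=(n^\pm)^TH_{(\cdot)}n^\pm$ is pinned down via the hyperplane-restriction rule. The paper's Cholesky bookkeeping pays off later, since the very factor $R$ with $-N_\alpha^TG^{-1}N_\alpha=R^TR$ is what drives the numerically stable implementation in Section~\ref{section5}; your argument is more conceptual, delivers the Loewner inequality $H_{\bar\alpha}\preceq H_\alpha$ rather than merely the scalar comparison, and as a bonus explicitly verifies that $B_{\alpha_+}$ has full row rank (hence $\Gamma_{\alpha_+}$ is well defined) before invoking Claim~2, a point the paper's proof leaves implicit when it passes from $N_{\alpha_+}^TG^{-1}N_{\alpha_+}\prec0$ back to $\Gamma_{\alpha_+}\succ0$.
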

	
	\begin{proof}
		For convenience, define
		\begin{align*}
			M_{\alpha} := 
			\begin{bmatrix}
				G_{12}^{T}\\
				A_{\alpha}
			\end{bmatrix},
			\qquad
			K_{\alpha} :=
			\begin{bmatrix}
				G_{22} & B_{\alpha}^{T}\\
				B_{\alpha} & 0
			\end{bmatrix}.
		\end{align*}
		By taking the Schur complement of $K_{\alpha}$, we obtain
		\begin{align*}
			\Gamma_{\alpha}
			= \Gamma_{\alpha_{-}}
			+ (B_{k}H_{\alpha_{-}}^{B}B_{k}^{T})^{-1}
			\left(A_{k}
			- \begin{bmatrix}
				B_{k} & 0
			\end{bmatrix}K_{\alpha_{-}}^{-1}M_{\alpha_{-}}\right)^{T}
			\left(A_{k}
			- \begin{bmatrix}
				B_{k} & 0
			\end{bmatrix}K_{\alpha_{-}}^{-1}M_{\alpha_{-}}\right),
		\end{align*}
		where $k = \alpha \setminus \alpha_{-}$ and
		\begin{align*}
		H^{B} := G_{22}^{-1}\left(I - B^{T}(BG_{22}^{-1}B^{T})^{-1}BG_{22}^{-1}\right).
		\end{align*}
		Since $G_{22}$ is negative definite, $H^{B}$ is negative semi-definite, which implies
		\begin{align*}
			\Gamma_{\alpha} \preccurlyeq \Gamma_{\alpha_{-}}.
		\end{align*}
		Hence, for $\bar{\alpha} \subseteq \alpha$, we have $\Gamma_{\bar{\alpha}} \succeq \Gamma_{\alpha}$.
		
		Next, we show that
		\begin{align*}
			\Gamma_{\alpha}\succ 0
			\quad \Longleftrightarrow \quad
			N_{\alpha}^{T}G^{-1}N_{\alpha} \prec 0,
		\end{align*}
		under the assumptions that $\Gamma_{\emptyset} \succ 0$ and $G_{22} \prec 0$.
		By the Woodbury matrix identity~\cite{highamAccuracyStabilityNumerical2002}, we have
		\begin{align*}
			\Gamma_{\alpha}^{-1}
			&= (\Gamma_{\emptyset} + W_{\alpha}^{T}U_{\alpha}^{-1}W_{\alpha})^{-1}\\
			&= \Gamma_{\emptyset}^{-1}
			- \Gamma_{\emptyset}^{-1}W_{\alpha}
			(U_{\alpha} + W_{\alpha}\Gamma_{\emptyset}^{-1}W_{\alpha}^{T})^{-1}
			W_{\alpha}^{T}\Gamma_{\emptyset}^{-1}\\
			&= \Gamma_{\emptyset}^{-1}
			- \Gamma_{\emptyset}^{-1}W_{\alpha}
			(N_{\alpha}^{T}G^{-1}N_{\alpha})^{-1}
			W_{\alpha}^{T}\Gamma_{\emptyset}^{-1},
		\end{align*}
		and
		\begin{align*}
			(N_{\alpha}^{T}G^{-1}N_{\alpha})^{-1}
			&= (U_{\alpha} + W_{\alpha}\Gamma_{\emptyset}^{-1}W_{\alpha}^{T})^{-1}\\
			&= U_{\alpha}^{-1}
			- U_{\alpha}^{-1}W_{\alpha}^{T}\Gamma_{\alpha}^{-1}W_{\alpha}U_{\alpha}^{-1},
		\end{align*}
		where $W_{\alpha} := A_{\alpha} - B_{\alpha}G_{22}^{-1}G_{12}^{T}$ and
		$U_{\alpha} := B_{\alpha}G_{22}^{-1}B_{\alpha}^{T}$.
		Since $G_{22} \prec 0$, it follows that $\Gamma_{\alpha} \succ 0$ if and only if
		$N_{\alpha}^{T}G^{-1}N_{\alpha} \prec 0$.
		
		Now consider the value $n^{T}H_{\bar{\alpha}}n - n^{T}H_{\alpha}n$,
		\begin{align*}
			n^{T}H_{\bar{\alpha}}n - n^{T}H_{\alpha}n
			&= -n^{T}G^{-1}N_{\bar{\alpha}}N_{\bar{\alpha}}^{\ast}n
			+ n^{T}G^{-1}N_{\alpha}N_{\alpha}^{\ast}n\\
			&= n^{T}G^{-1}N_{\alpha}
			\left(
			\begin{bmatrix}
				N_{\bar{\alpha}}^{T}G^{-1}N_{\bar{\alpha}} & N_{\bar{\alpha}}^{T}G^{-1}N_{\beta}\\
				N_{\beta}^{T}G^{-1}N_{\bar{\alpha}} & N_{\beta}^{T}G^{-1}N_{\beta}
			\end{bmatrix}^{-1}
			- 
			\begin{bmatrix}
				(N_{\bar{\alpha}}^{T}G^{-1}N_{\bar{\alpha}})^{-1} & 0\\
				0 & 0
			\end{bmatrix}
			\right)
			N_{\alpha}^{T}G^{-1}n,
		\end{align*}
		where $\beta = \alpha \setminus \bar{\alpha}$.
		Since both $N_{\alpha}^{T}G^{-1}N_{\alpha}$ and
		$N_{\bar{\alpha}}^{T}G^{-1}N_{\bar{\alpha}}$ are negative definite, using Cholesky decomposition, we define $N_{\bar{\alpha}}^{T}G^{-1}N_{\bar{\alpha}} = -L_{\bar{\alpha}}L_{\bar{\alpha}}^{T}$. Then the Cholesky decomposition of $N_{\alpha}^{T}G^{-1}N_{\alpha}$ can be written as
		\begin{align*}
			-N_{\alpha}^{T}G^{-1}N_{\alpha} = -\begin{bmatrix}
				L_{\bar{\alpha}} & 0\\
				N_{\beta}^{T}G^{-1}N_{\bar{\alpha}}L_{\bar{\alpha}}^{-T} & L_{\beta}
			\end{bmatrix}\begin{bmatrix}
				L_{\bar{\alpha}}^{T} & L_{\bar{\alpha}}^{-1}N_{\bar{\alpha}}^{T}G^{-1}N_{\beta}\\
				0 & L_{\beta}^{T}
			\end{bmatrix},
		\end{align*}
		where $L_{\beta}L_{\beta}^{T} = -N_{\beta}^{T}H_{\bar{\alpha}}N_{\beta}$. By taking the Schur complement of $\begin{bmatrix}
			L_{\bar{\alpha}} & 0\\
			N_{\beta}^{T}G^{-1}N_{\bar{\alpha}}L_{\bar{\alpha}}^{-T} & L_{\beta}
		\end{bmatrix}$, we obtain
		\begin{align*}
			\begin{bmatrix}
				L_{\bar{\alpha}} & 0\\
				N_{\beta}^{T}G^{-1}N_{\bar{\alpha}}L_{\bar{\alpha}}^{-T} & L_{\beta}
			\end{bmatrix}^{-1} = \begin{bmatrix}
				L_{\bar{\alpha}}^{-1} & 0\\
				-L_{\beta}^{-1}N_{\beta}^{T}G^{-1}N_{\bar{\alpha}}L_{\bar{\alpha}}^{-T}L_{\bar{\alpha}}^{-1} & L_{\beta}^{-1}
			\end{bmatrix}.
		\end{align*}
		Hence,
		\begin{align*}
			\begin{bmatrix}
				N_{\bar{\alpha}}^{T}G^{-1}N_{\bar{\alpha}} & N_{\bar{\alpha}}^{T}G^{-1}N_{\beta}\\
				N_{\beta}^{T}G^{-1}N_{\bar{\alpha}} & N_{\beta}^{T}G^{-1}N_{\beta}
			\end{bmatrix}^{-1} - \begin{bmatrix}
				(N_{\bar{\alpha}}^{-1}G^{-1}N_{\bar{\alpha}})^{-1} & 0\\
				0 & 0
			\end{bmatrix} & = -\begin{bmatrix}
				L^{T}L & -L^{T}L_{\beta}^{-1}\\
				-L_{\beta}^{-T}L & L_{\beta}^{-T}L_{\beta}^{-1}
			\end{bmatrix}\\
			& = -\begin{bmatrix}
				L^{T}\\
				-L_{\beta}^{-T}
			\end{bmatrix} \begin{bmatrix}
				L & -L_{\beta}^{-1}
			\end{bmatrix},
		\end{align*}
		where $L = L_{\beta}^{-1}N_{\beta}^{T}G^{-1}N_{\bar{\alpha}}L_{\bar{\alpha}}^{-T}L_{\bar{\alpha}}^{-1}$. This shows that $ \left(\begin{bmatrix}
			N_{\bar{\alpha}}^{T}G^{-1}N_{\bar{\alpha}} & N_{\bar{\alpha}}^{T}G^{-1}N_{\beta}\\
			N_{\beta}^{T}G^{-1}N_{\bar{\alpha}} & N_{\beta}^{T}G^{-1}N_{\beta}
		\end{bmatrix}^{-1} - \begin{bmatrix}
			(N_{\bar{\alpha}}^{-1}G^{-1}N_{\bar{\alpha}})^{-1} & 0\\
			0 & 0
		\end{bmatrix}\right)$ is negative semi-definite, and thus $n^{T}H_{\bar{\alpha}}n \leq n^{T}H_{\alpha}n$.
		
		Finally, we define $N_{\alpha}^{T}G^{-1}N_{\alpha} = -L_{\alpha}L_{\alpha}^{T}$.
		If $n^{T}H_{\alpha}n < 0$, then
		\begin{align*}
			N_{\alpha_{+}}^{T}G^{-1}N_{\alpha_{+}}
			&=
			\begin{bmatrix}
				N_{\alpha}^{T}G^{-1}N_{\alpha} & N_{\alpha}^{T}G^{-1}n\\
				n^{T}G^{-1}N_{\alpha} & n^{T}G^{-1}n
			\end{bmatrix}\\
			&=
			-\begin{bmatrix}
				L_{\alpha} & 0\\
				n^{T}G^{-1}N_{\alpha}L_{\alpha}^{-T} & \sqrt{-n^{T}H_{\alpha}n}
			\end{bmatrix}
			\begin{bmatrix}
				L_{\alpha}^{T} & L_{\alpha}^{-1}N_{\alpha}^{T}G^{-1}n\\
				0 & \sqrt{-n^{T}H_{\alpha}n}
			\end{bmatrix},
		\end{align*}
		which implies that $N_{\alpha_{+}}^{T}G^{-1}N_{\alpha_{+}} \prec 0$ and, consequently,
		\begin{align*}
		\Gamma_{\alpha_{+}} \succ 0.
		\end{align*}
	\end{proof}
	
	Assume that $(z^{\ast}, \alpha^{\ast})$ is an S-pair for Problem~\eqref{realPro}.
	From Lemma~\ref{lemma positive} and Corollary~\ref{corollary}, we have
	$\Gamma_{p} \succ 0$ for all $p \in \alpha^{\ast}$, which is equivalent to
	$n_{p}^{T}G^{-1}n_{p} < 0$.
	This implies that $p$th constraint can be added to the active set only if
	$n_{p}^{T}G^{-1}n_{p} < 0$.
	In Step~0 of the dual algorithm~\ref{dual algorithm}, we compute
	$n_{i}^{T}G^{-1}n_{i}$ for all $i \in [m]$, and define
	\begin{align}
	K := \left\lbrace i \in [m] \mid n_{i}^{T}G^{-1}n_{i} < 0 \right\rbrace,
	\end{align}
	which represents the indices of constraints that may enter the active set.
	
	Lemma~\ref{lemma of z} describes how to determine the search direction $d$,
	while Lemma~\ref{lemma positive} ensures the positive definiteness of
	$\Gamma_{\alpha}$. In the next step, we present the procedure for selecting the
	step size $t$ at each iteration.
	
	\begin{theorem}\label{V-tri z}
		Let $(z, \alpha, p)$ be a V-triple. Define $\bar{z}$ by \eqref{definition of z-} and \eqref{definition of d}, and let
		\begin{align}\label{definition of t}
			t = \min\{t_{1},\, t_{2}\},
		\end{align}
		where
		\begin{align}\label{definition of t1}
			t_{1} = \min\left\lbrace\min_{\substack{j \in \alpha\\ r_{j} < 0}} \left\lbrace\frac{u_{j}^{+}}{r_{j}}\right\rbrace, \infty \right\rbrace,
		\end{align}
		and
		\begin{align}\label{definition of t2}
			t_{2} =
			\left\lbrace\begin{aligned}
				&\infty, & \text{if } d^{T} n^{+} \geq 0,\\
				&-\dfrac{s_{p}(z)}{d^{T}n^{+}}, & \text{otherwise.}
			\end{aligned}\right.
		\end{align}
		Then the following statements hold:
		\begin{enumerate}
			\item If $t = t_{1} = u_{k}^{+}(z) / r_{k}$, then $(\bar{z},\, \alpha \setminus \{k\},\, p)$ is a V-triple.
			\item If $t = t_{2} = -s_{p}(z)/(d^{T}n^{+})$, then $(\bar{z},\, \alpha \cup \{p\})$ is an S-pair.
			\item If $t = \infty$, then $P(\alpha \cup \{p\})$ is infeasible.
		\end{enumerate}
	\end{theorem}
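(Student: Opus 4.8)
The plan is to carry all the per‑step bookkeeping through Lemma~\ref{lemma of z}, which already records the effect of the move $\bar z = z + td$ on $g$, on the active constraints, on $s_p$, on the dual vector $u_{\alpha_+}$, and on $f$. Two identities from its proof are the backbone of everything below: first, $Gd = N_{\alpha_+}\begin{bmatrix} r\\ 1\end{bmatrix}$ with $r = -N_{\alpha}^{\ast} n^{+}$; and second, by symmetry of $H_{\alpha}$ together with Proposition~\ref{Proportion of H}(4), $d^{T} n^{+} = n^{+T}H_{\alpha} n^{+} = d^{T} G d$. I would then split into the three asserted cases according to whether the minimum defining $t$ is attained at a finite $t_1$, at a finite $t_2$, or at $+\infty$.

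For statement (1), where $t = t_1 = u_k^{+}/r_k$ with $r_k < 0$, I would verify the four defining conditions of a V‑triple in Definition~\ref{V-triple} for $(\bar z,\, \alpha\setminus\{k\},\, p)$. The active‑set condition $s_i(\bar z)=0$ for $i\in\alpha\setminus\{k\}$ is immediate from Lemma~\ref{lemma of z}, and $s_p(\bar z)>0$ follows because $t_1\le t_2$ keeps us short of the primal boundary (if $d^{T} n^{+}\ge0$ then $t_2=\infty$ and $s_p(\bar z)\ge s_p(z)>0$; if $d^{T} n^{+}<0$ then $t_1<t_2$ gives $s_p(\bar z)>0$). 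For the stationarity condition $H_{(\alpha\setminus\{k\})\cup\{p\}}\,g(\bar z)=0$, the key observation is that $t=t_1$ drives the $k$th dual component to zero, $u_k(\bar z)=u_k^{+}-t r_k=0$; since $H_{\alpha_+}g(\bar z)=0$ gives $g(\bar z)=-N_{\alpha_+}u_{\alpha_+}(\bar z)$, the vanishing of that one component means $g(\bar z)\in\operatorname{range} N_{(\alpha\setminus\{k\})\cup\{p\}}$, whence $H_{(\alpha\setminus\{k\})\cup\{p\}}g(\bar z)=0$ by Proposition~\ref{Proportion of H}(1). Dual feasibility $u_{(\alpha\setminus\{k\})\cup\{p\}}(\bar z)\le0$ holds because $t_1$ is the smallest ratio $u_j^{+}/r_j$ over $r_j<0$, so every component of $u_{\alpha_+}(z)-t[r;1]$ stays $\le0$ with the $k$th hitting $0$ exactly; dropping that zero component preserves the sign. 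Finally $\Gamma_{\alpha\setminus\{k\}}\succ0$ is Lemma~\ref{lemma positive}(1) applied to $\alpha\setminus\{k\}\subseteq\alpha$, and linear independence of the relevant $B$‑columns is inherited from the V‑triple $(z,\alpha,p)$ since we only pass to a subset.

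For statement (2), where $t=t_2=-s_p(z)/(d^{T}n^{+})<\infty$, I would check the S‑pair conditions \eqref{S-pair1} and \eqref{S-pair2} of Definition~\ref{definition of S-pair} for $(\bar z,\,\alpha\cup\{p\})$. By the choice of $t_2$ one gets $s_p(\bar z)=0$, so together with $s_i(\bar z)=0$ for $i\in\alpha$ the active set is exactly $\alpha_+$ and $D_{\alpha_+}\bar z+h_{\alpha_+}=0$; the identity $g(\bar z)=-N_{\alpha_+}u_{\alpha_+}(\bar z)$ is the stationarity equation with multipliers $\lambda=u_{\alpha_+}(\bar z)$, and $t_2\le t_1$ guarantees $u_{\alpha_+}(\bar z)\le0$, i.e.\ $\lambda\le0$. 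Since $t_2<\infty$ forces $d^{T}n^{+}<0$, the identity $d^{T}n^{+}=n^{+T}H_{\alpha} n^{+}$ gives $n^{+T}H_{\alpha} n^{+}<0$, so Lemma~\ref{lemma positive}(2) yields $\Gamma_{\alpha_+}\succ0$; the equivalence $\Gamma_{\alpha_+}\succ0\Leftrightarrow N_{\alpha_+}^{T}G^{-1}N_{\alpha_+}\prec0$ established in that lemma simultaneously forces $N_{\alpha_+}$ to have full column rank, which is the required linear independence of $\{D_i:i\in\alpha_+\}$. These are the conditions of Corollary~\ref{corollary} certifying that $\bar z$ is a local minimax point of $P(\alpha_+)$, so together with the active‑set identity and linear independence, $(\bar z,\alpha_+)$ is an S‑pair.

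Statement (3) is the main obstacle. When $t=\infty$ both $t_1=\infty$ and $t_2=\infty$, which read as $r\ge0$ and $d^{T}n^{+}=n^{+T}H_{\alpha} n^{+}\ge0$; combined with $Gd=N_{\alpha_+}\begin{bmatrix} r\\1\end{bmatrix}$ this produces a Farkas‑type certificate $Gd=N_{\alpha_+}\rho$ with $\rho=[r;1]\ge0$ and $\rho_p=1>0$, together with a dual‑feasible ray $u_{\alpha_+}(z+td)=u_{\alpha_+}(z)-t\rho\le0$ for all $t\ge0$ along which $s_p(z+td)=s_p(z)+t\,d^{T}n^{+}$ never drops below $s_p(z)>0$. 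The plan is to use this certificate to show $P(\alpha_+)$ admits no local minimax point and then invoke the contrapositive of Assumption~\ref{assumption}, legitimate because $\alpha_+\subseteq K$, to conclude that Problem~\eqref{realPro} has none either, i.e.\ is infeasible. Nonexistence for $P(\alpha_+)$ I would argue through the outer value function $\phi(x)=\max\{f(x,y):y\text{ feasible for }\alpha_+\}$, which is finite for every $x$ because $G_{22}\prec0$ makes the inner problem strongly concave while full row rank of $B_{\alpha_+}$ keeps the inner feasible set nonempty; the certificate is then meant to exhibit $d$ as a recession direction along which $\phi$ is unbounded below, so that no local minimizer of $\phi$, hence no local minimax point, can exist. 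The delicate point, and where the argument needs the most care, is that a hypothetical local minimax point could carry a strictly smaller active set $\alpha'\subsetneq\alpha_+$, so one must use $t_1=\infty$ (no active constraint can be released while keeping $u\le0$) to rule out every such candidate subset, and couple this with the strong concavity in $y$ to collapse the obstruction to the single scalar direction $d$.
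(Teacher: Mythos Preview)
Your treatment of statements (1) and (2) is essentially the paper's proof: you use Lemma~\ref{lemma of z} for the bookkeeping, the vanishing of $u_k(\bar z)$ to express $g(\bar z)$ in the range of $N_{(\alpha\setminus\{k\})\cup\{p\}}$, and Lemma~\ref{lemma positive} for the second-order conditions. One small slip: the implication $g(\bar z)\in\operatorname{range}N_{\beta}\Rightarrow H_{\beta}g(\bar z)=0$ is Proposition~\ref{Proportion of H}(3), not~(1).

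For statement (3) your approach diverges from the paper and carries a genuine gap. You try to argue through the outer value function $\phi(x)$, exhibiting $d$ as a direction along which $\phi$ is unbounded below and concluding that no local minimizer of $\phi$ exists. But unboundedness below along a ray does not preclude local minima unless $\phi$ is convex, and nothing here guarantees convexity of $\phi$: the matrix $G_{11}$ need not be positive definite, and the inner feasible set depends on $x$. You yourself flag the difficulty of ruling out candidates with smaller active sets $\alpha'\subsetneq\alpha_+$, and the sketch does not close it. Moreover, your appeal to Assumption~\ref{assumption} is unnecessary: the theorem only asserts that $P(\alpha_+)$ is infeasible (in the paper's sense of admitting no local minimax point), not the full problem; Assumption~\ref{assumption} enters only later, in the finite-termination argument.

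The paper's route is a short direct contradiction at the KKT level. Suppose $P(\alpha_+)$ had a local minimax point $z^\ast$ with multipliers $u^\ast\le 0$ and $N_{\alpha_+}^{T}z^\ast+h_{\alpha_+}\le 0$. Set $w=z^\ast-z$, subtract the two stationarity systems to get $Gw+N_{\alpha_+}u^\ast-N_\alpha u=0$, and eliminate $u-u^\ast_\alpha$ via $(N_\alpha^{T}G^{-1}N_\alpha)^{-1}N_\alpha^{T}w$. A two-line computation then gives
\[
(n^{+})^{T}w \;=\; -\,(n^{+})^{T}H_\alpha n^{+}\,u_p^\ast \;-\; r^{T}N_\alpha^{T}w \;\ge\; 0,
\]
using $r\ge 0$, $N_\alpha^{T}w\le 0$, $u_p^\ast\le 0$, and $(n^{+})^{T}H_\alpha n^{+}\ge 0$. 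This contradicts $(n^{+})^{T}w<0$, which follows from $s_p(z)>0\ge s_p(z^\ast)$. No value-function analysis, no recession direction, and no assumption beyond the V-triple data are needed.
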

	
	\begin{proof}
		By the definition of $t$ in \eqref{definition of t}, \eqref{definition of t1}, \eqref{definition of t2} and Lemma~\ref{lemma of z}, we have
		\begin{align*}
		H^{+} g(\bar{z}) = 0, \qquad
		s_{i}(\bar{z}) = 0~(\forall i \in \alpha), \qquad
		u^{+}(\bar{z}) \le 0.
		\end{align*}

		\noindent\textbf{Case 1:} $t = t_{1} < t_{2}$.  
		Then $u_{k}^{+}(\bar{z}) = 0$ and $s_{p}(\bar{z}) > 0$. Since $H^{+}g(\bar{z}) = 0$ and $u_{k}^{+}(\bar{z}) = 0$, we have
		\begin{align*}
		g(\bar{z}) = N^{+}u^{+}(\bar{z}) = 
		\sum_{i \in \alpha \cup \{p\} \setminus \{k\}} u_{i}^{+} n_{i}.
		\end{align*}
		The set of normals $\{ b_{i} \mid i \in \alpha \cup \{p\} \setminus \{k\} \}$ is linearly independent.  
		By Lemma~\ref{lemma positive}, the corresponding $\Gamma_{\alpha\setminus\{k\}}$ is positive definite. Hence $(\bar{z},\, \alpha \setminus \{k\},\, p)$ is a V-triple.
		
		\noindent\textbf{Case 2:} $t = t_{2} = -s_{p}(z)/(d^{T}n^{+})$.  
		Then $s_{p}(\bar{z}) = 0$.  
		By Lemma~\ref{lemma positive}, we have $\Gamma_{\alpha_{+}} \succ 0$ and thus $(\bar{z},\, \alpha \cup \{p\})$ is an S-pair.

		\noindent\textbf{Case 3:} $t = \infty$.  
		In this case $r \ge 0$ and $n^{+T}Hn^{+} \ge 0$.  
		Since $(z, \alpha, p)$ is a V-triple, the following system holds:
		\begin{align*}
		\begin{cases}
			Gz + c + N u = 0,\\
			N^{T}z + h = 0,\\
			n^{+T}z + h^{+} > 0.
		\end{cases}
		\end{align*}
		Assume, for contradiction, that Problem $P(\alpha \cup \{p\})$ admits a local minimax point $(z^{\ast}, u^{\ast})$.  
		Then
		\begin{align*}
		\begin{cases}
			Gz^{\ast} + c + N^{+}u^{\ast} = 0,\\
			N^{T}z^{\ast} + h \le 0,\\
			n^{+T}z^{\ast} + h^{+} \le 0.
		\end{cases}
		\end{align*}
		Let $d = z^{\ast} - z$.  
		Subtracting the two systems gives
		\begin{align}
			Gd + N^{+}u^{\ast} - N u &= 0, \label{eq:sys1}\\
			N^{T}d &\le 0, \label{eq:sys2}\\
			n^{+T}d &< 0. \label{eq:sys3}
		\end{align}
		From \eqref{eq:sys1} and \eqref{eq:sys2}, we have
		\begin{align*}
			d &= G^{-1}(N u - N^{+}u^{\ast}),\\
			(N^{T}G^{-1}N)^{-1}N^{T}d
			&= u - u_{\alpha}^{\ast} - (N^{T}G^{-1}N)^{-1}N^{T}G^{-1}n^{+}u_{p}^{\ast}.
		\end{align*}
		Consequently,
		\begin{align*}
			n^{+T}d
			&= n^{+T}G^{-1}(N u - N^{+}u^{\ast})\\
			&= -n^{+T}G^{-1}n^{+}u_{p}^{\ast}
			+ n^{+T}G^{-1}N(N^{T}G^{-1}N)^{-1}N^{T}G^{-1}n^{+}u_{p}^{\ast}
			+ n^{+T}G^{-1}N(N^{T}G^{-1}N)^{-1}N^{T}d\\
			&= -n^{+T}Hn^{+}u_{p}^{\ast} - r^{T}N^{T}d\\
			&\geq 0,
		\end{align*}
		which contradicts \eqref{eq:sys3}.  
		Therefore, Problem $P(\alpha \cup \{p\})$ is infeasible.
	\end{proof}
	
	Noting that $G$ is an indefinite matrix, there exists a vector $n^{+}$ such that $n^{+T}Hn \geq 0$. According to Theorem~\ref{V-tri z}, the value of $n^{+T}Hn$ plays a crucial role, as it determines the value of $t_{2}$ and $t$. We now present two sufficient conditions on $n^{+}$ that ensure $n^{+T}Hn \geq 0$. Moreover, if $n^{+}$ does not satisfy two conditions below, $n^{+T}Hn^{+}$ may still be non-negative.
	
	\begin{lemma}\label{lemma linear}
		Let $(z, \alpha)$ be an S-pair.  
		If $n_{p}$ is a linear combination of the columns of $N_{\alpha}$, then $n_{p}^{T}H_{\alpha}n_{p} = 0$.  
		If $B_{p}$ is a linear combination of the rows of $B_{\alpha}$, then $n_{p}^{T}H_{\alpha}n_{p} \ge 0$.
	\end{lemma}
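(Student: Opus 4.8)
The first assertion is immediate and I would dispatch it with property (3) of Proposition~\ref{Proportion of H}. If $n_{p}$ lies in the column space of $N_{\alpha}$, say $n_{p}=N_{\alpha}v$, then $H_{\alpha}n_{p}=H_{\alpha}N_{\alpha}v=0$, and hence $n_{p}^{T}H_{\alpha}n_{p}=0$.

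For the second assertion the hypothesis controls only the $B$-part of $n_{p}$, so I would first isolate the component of $n_{p}$ that survives multiplication by $H_{\alpha}$. Writing $N_{\alpha}=\begin{bmatrix}A_{\alpha}^{T}\\ B_{\alpha}^{T}\end{bmatrix}$ and $n_{p}=\begin{bmatrix}A_{p}^{T}\\ B_{p}^{T}\end{bmatrix}$, the assumption that $B_{p}$ is a linear combination of the rows of $B_{\alpha}$ supplies a vector $w$ with $B_{\alpha}^{T}w=B_{p}^{T}$. Setting $a=A_{p}^{T}-A_{\alpha}^{T}w$ yields the splitting $n_{p}=N_{\alpha}w+\begin{bmatrix}a\\ 0\end{bmatrix}$. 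Because $H_{\alpha}N_{\alpha}=0$ and $H_{\alpha}$ is symmetric (properties (3) and (2) of Proposition~\ref{Proportion of H}), all cross terms vanish and
\begin{align*}
n_{p}^{T}H_{\alpha}n_{p}=\begin{bmatrix}a^{T} & 0\end{bmatrix}H_{\alpha}\begin{bmatrix}a\\ 0\end{bmatrix}.
\end{align*}
Thus the sign of $n_{p}^{T}H_{\alpha}n_{p}$ is governed solely by the top-left $n_{x}\times n_{x}$ block of $H_{\alpha}$.

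The decisive step is to identify this block as $\Gamma_{\alpha}^{-1}$. From the closed form~\eqref{definition of H}, $H_{\alpha}$ is exactly the $(1,1)$ block of the inverse of the saddle-point matrix $\begin{bmatrix}G & N_{\alpha}\\ N_{\alpha}^{T} & 0\end{bmatrix}$. Permuting the variables into the grouping $[\,x\mid(y,\lambda)\,]$ turns this matrix into $\begin{bmatrix}G_{11} & M_{\alpha}^{T}\\ M_{\alpha} & K_{\alpha}\end{bmatrix}$ with $M_{\alpha}=\begin{bmatrix}G_{12}^{T}\\ A_{\alpha}\end{bmatrix}$ and $K_{\alpha}=\begin{bmatrix}G_{22} & B_{\alpha}^{T}\\ B_{\alpha} & 0\end{bmatrix}$, whose Schur complement with respect to $K_{\alpha}$ is precisely $\Gamma_{\alpha}$. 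Hence the $x$–$x$ block of the inverse equals $\Gamma_{\alpha}^{-1}$, and since a permutation does not move the $x$–$x$ block, this coincides with the top-left $n_{x}\times n_{x}$ block of $H_{\alpha}$. Therefore $n_{p}^{T}H_{\alpha}n_{p}=a^{T}\Gamma_{\alpha}^{-1}a$, and because $(z,\alpha)$ is an S-pair we have $\Gamma_{\alpha}\succ 0$ by~\eqref{S-pair2}, so $\Gamma_{\alpha}^{-1}\succ 0$ and $a^{T}\Gamma_{\alpha}^{-1}a\ge 0$, as claimed.

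The main obstacle is the block-inverse identification in the last paragraph: one must justify cleanly that $H_{\alpha}$ is the projected $(1,1)$ block of the bordered system and that re-grouping $(y,\lambda)$ exposes $\Gamma_{\alpha}$ as a Schur complement, where invertibility of $K_{\alpha}$ follows from $B_{\alpha}$ having full row rank (guaranteed for an S-pair). If one prefers to avoid the permutation argument, the identity $(H_{\alpha})_{xx}=\Gamma_{\alpha}^{-1}$ can instead be verified directly through the same Woodbury-type manipulation already employed in the proof of Lemma~\ref{lemma positive}; either route reduces the claim to the positive definiteness of $\Gamma_{\alpha}$.
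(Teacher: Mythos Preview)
Your argument is correct and, for the second assertion, takes a genuinely different route from the paper. The paper expands $n_{p}^{T}H_{\alpha}n_{p}$ directly for an \emph{arbitrary} $n_{p}$ into
\[
n_{p}^{T}H_{\alpha}n_{p}
= B_{p}H_{\alpha}^{B}B_{p}^{T}
+ \bigl(W_{p}-B_{p}G_{22}^{-1}B_{\alpha}^{T}U_{\alpha}^{-1}W_{\alpha}\bigr)\,
\Gamma_{\alpha}^{-1}\,
\bigl(W_{p}-B_{p}G_{22}^{-1}B_{\alpha}^{T}U_{\alpha}^{-1}W_{\alpha}\bigr)^{T},
\]
with $H_{\alpha}^{B}=G_{22}^{-1}\bigl(I-B_{\alpha}^{T}(B_{\alpha}G_{22}^{-1}B_{\alpha}^{T})^{-1}B_{\alpha}G_{22}^{-1}\bigr)$, and only then invokes the hypothesis on $B_{p}$ to annihilate the first term. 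You instead use the hypothesis at the outset to absorb the $y$-part of $n_{p}$ into $N_{\alpha}w$, reducing the question to the $x$--$x$ block of $H_{\alpha}$, which you identify as $\Gamma_{\alpha}^{-1}$ via the Schur complement of the bordered KKT matrix. Your approach is cleaner and more structural; the paper's has the minor advantage of producing the full two-term decomposition valid for every $n_{p}$, which makes the role of the $B$-hypothesis visible as killing exactly one summand. Both reductions terminate at the same point: positivity follows from $\Gamma_{\alpha}\succ 0$. Your remark that invertibility of $K_{\alpha}$ is available for an S-pair is justified, since $\Gamma_{\alpha}$ being well-defined (and positive definite) already presupposes it.
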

	
	\begin{proof}
		If $n_{p}$ is a linear combination of the columns of $N_{\alpha}$, then there exists a vector $\beta$ such that 
		\begin{align*}
		n_{p} = N_{\alpha}\beta = \sum_{i \in \alpha} n_{i}\beta_{i}.
		\end{align*}
		By Proposition~\ref{Proportion of H}, we have $H_{\alpha}n_{p} = 0$, hence $n_{p}^{T}H_{\alpha}n_{p} = 0$.
		
		For the second claim, from the definition of $H_{\alpha}$ we obtain
		\begin{align*}
			n_{p}^{T}H_{\alpha}n_{p}
			&= B_{p}G_{22}^{-1}B_{p}^{T} + W_{p}F^{-1}W_{p}^{T} 
			- (W_{p}F^{-1}W_{\alpha}^{T} + B_{p}G_{22}^{-1}B_{\alpha}^{T})(N_{\alpha}^{T}G^{-1}N_{\alpha})^{-1}
			(W_{\alpha}F^{-1}W_{p}^{T} + B_{\alpha}G_{22}^{-1}B_{p}^{T}) \\
			&= B_{p}H_{\alpha}^{B}B_{p}^{T} 
			+ (W_{p} - B_{p}G_{22}^{-1}B_{\alpha}^{T}U_{\alpha}^{-1}W_{\alpha})
			\Gamma_{\alpha}^{-1}
			(W_{p}^{T} - W_{\alpha}^{T}U_{\alpha}^{-1}B_{\alpha}G_{22}^{-1}B_{p}^{T}),
		\end{align*}
		where 
		\begin{align*}
		F = G_{11} - G_{12}G_{22}^{-1}G_{12}^{T}, \quad
		W_{\alpha} = A_{\alpha} - B_{\alpha}G_{22}^{-1}G_{12}^{T}, \quad
		U_{\alpha} = B_{\alpha}G_{22}^{-1}B_{\alpha}^{T},
		\end{align*}
		and 
		\begin{align*}
		H_{\alpha}^{B} := G_{22}^{-1}\left(I - B_{\alpha}^{T}(B_{\alpha}G_{22}^{-1}B_{\alpha}^{T})^{-1}B_{\alpha}G_{22}^{-1}\right).
		\end{align*}
		Note that $H_{\alpha}^{B}$ shares the same structural property as $H_{\alpha}$ in Proportion~\ref{Proportion of H}.  
		If $B_{p}$ is a linear combination of the rows of $B_{\alpha}$, then $B_{p}H_{\alpha}^{B}B_{p}^{T} = 0$.  
		Since $\Gamma_{\alpha}$ is positive definite, it follows that $n_{p}^{T}H_{\alpha}n_{p} \ge 0$.
	\end{proof}
	
	We now establish a theorem that describes how to generate a new S-pair from a given one. 
	In particular, we analyze the behavior of the objective function and show that its value decreases after each update.
	
	\begin{theorem}\label{theorem <}
		Let $(z, \alpha)$ be an S-pair and let $p \in K \setminus \alpha$ be an index such that 
		\begin{align*}
		n_{p}^{T}H_{\alpha}n_{p} < 0 \quad \text{and} \quad s_{p}(z) > 0.
		\end{align*}
		Then a new S-pair $(\bar{z}, \bar{\alpha}\cup\{p\})$ can be obtained, where the step direction $d$ is defined by \eqref{definition of d}, $\bar{\alpha} \subseteq \alpha$, and 
		\begin{align*}
		f(\bar{z}) < f(z)
		\end{align*}
		after $|\alpha| - |\bar{\alpha}|$ partial steps and one full step.
	\end{theorem}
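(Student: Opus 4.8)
The plan is to prove this by iterating Theorem~\ref{V-tri z} and showing that the process terminates at an S-pair with strictly decreased objective value. The starting observation is that, since $(z,\alpha)$ is an S-pair and $p \in K\setminus\alpha$ satisfies $n_p^T H_\alpha n_p < 0$ and $s_p(z) > 0$, the triple $(z,\alpha,p)$ is a V-triple: conditions (1)--(3) of Definition~\ref{V-triple} hold because for an S-pair we have $H_\alpha g(z)=0$, hence $H_{\alpha_+}g(z)=H_{\alpha_+}GH_\alpha g(z)=0$ by Proposition~\ref{Proportion of H}, the dual feasibility $u_{\alpha_+}\le 0$ is inherited from the S-pair with the new component being zero, and condition (4), $\Gamma_\alpha\succ 0$, holds because $(z,\alpha)$ is an S-pair; moreover $\Gamma_{\alpha_+}\succ 0$ follows from $n_p^T H_\alpha n_p<0$ by Lemma~\ref{lemma positive}(2).

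Next I would apply Theorem~\ref{V-tri z} repeatedly. At each invocation with a current V-triple, the step length $t=\min\{t_1,t_2\}$ produces one of three outcomes. Since we have already shown $\Gamma_{\alpha_+}\succ 0$, and since the feasibility of the enlarged subproblem is guaranteed under Assumption~\ref{assumption} (as Problem~\eqref{realPro} admits a local minimax point, so does every $P(J)$ with $J\subseteq K$), the case $t=\infty$ cannot occur; thus only Cases~1 and~2 of Theorem~\ref{V-tri z} arise. In Case~1 ($t=t_1<t_2$) a constraint indexed by $k$ is dropped and $(\bar z,\alpha\setminus\{k\},p)$ is again a V-triple, reducing $|\alpha|$ by one; in Case~2 ($t=t_2$) we reach an S-pair $(\bar z,\alpha\cup\{p\})$ and the loop stops. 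Because each partial step strictly shrinks the active set and $|\alpha|$ is finite, after at most $|\alpha|-|\bar\alpha|$ partial steps we must hit Case~2, producing the final S-pair $(\bar z,\bar\alpha\cup\{p\})$ with $\bar\alpha\subseteq\alpha$.

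It remains to verify the strict descent $f(\bar z)<f(z)$. The objective-update formula from Lemma~\ref{lemma of z} gives, at each step,
\begin{align*}
f(\bar z) = f(z) + t\,d^T n^+\left(\frac{t}{2}-u_p(z)\right).
\end{align*}
For the terminal full step $d^T n^+ = n_p^T H_\alpha n_p < 0$ (after the active set has shrunk, the quantity $n_p^T H_{\bar\alpha}n_p$ remains negative by the monotonicity in Lemma~\ref{lemma positive}(1)), while $t=t_2>0$ and, since $s_p$ has been positive throughout, the factor $\tfrac{t}{2}-u_p(z)$ must be controlled; I would argue that the primal step length satisfies $0<t\le -s_p(z)/(d^T n^+)$, so $t\,d^T n^+\le -s_p(z)<0$, and that the dual component $u_p$ stays nonpositive, forcing the whole increment to be negative. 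Summing the increments across all partial and the final step, and checking that each partial step contributes a nonpositive change while the full step contributes a strictly negative one, yields $f(\bar z)<f(z)$.

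The main obstacle I expect is the bookkeeping of the objective increment across the sequence of partial steps, specifically ensuring that the sign of each increment $t\,d^T n^+(\tfrac{t}{2}-u_p)$ is correctly nonpositive at partial steps and strictly negative at the full step. The delicate point is that during partial steps the direction $d=H_\alpha n^+$ changes as $\alpha$ shrinks, so one must track how $d^T n^+$ and $u_p$ evolve; establishing that $\tfrac{t}{2}-u_p(z)$ has the right sign at the terminal step — equivalently that the accumulated dual variable $u_p$ never turns the product positive — is the crux, and I would handle it by combining the dual-update formula~\eqref{definition of u} with the sign constraints $u_{\alpha_+}\le 0$ and $d^T n^+=n_p^T H_{\bar\alpha}n_p<0$ guaranteed by Lemmas~\ref{lemma of z} and~\ref{lemma positive}.
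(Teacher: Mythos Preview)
Your approach is essentially the paper's: show $(z,\alpha,p)$ is a V-triple, iterate Theorem~\ref{V-tri z} until a full step occurs, and use the objective-update formula from Lemma~\ref{lemma of z} together with the monotonicity from Lemma~\ref{lemma positive} to obtain strict descent. Two points need tightening.

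First, you invoke Assumption~\ref{assumption} to rule out $t=\infty$, but the theorem does not carry that hypothesis, and it is unnecessary here. You already note (for the descent argument) that $n_p^{T} H_{\tilde\alpha} n_p \le n_p^{T} H_{\alpha} n_p < 0$ for every $\tilde\alpha\subseteq\alpha$ by Lemma~\ref{lemma positive}(1). This is precisely what forces $d^{T} n^{+} < 0$ at every step of the loop, so $t_2 = -s_p/(d^{T} n^{+})$ is finite throughout and Case~3 of Theorem~\ref{V-tri z} never arises. Use this directly rather than appealing to feasibility of subproblems. (The same observation, via Lemma~\ref{lemma linear}, also gives the linear independence of $B_{\alpha_+}$ required in Definition~\ref{V-triple}.)

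Second, the descent bookkeeping is simpler than you fear. At every step the current triple is a V-triple, so $u_p^{+}\le 0$ by Definition~\ref{V-triple}(3); hence $\tfrac{t}{2}-u_p^{+}\ge \tfrac{t}{2}\ge 0$. Since $d^{T} n^{+}<0$ as above, the increment $t\,d^{T} n^{+}\bigl(\tfrac{t}{2}-u_p^{+}\bigr)\le 0$ at every step, with strict inequality at the full step because there $t=t_2>0$ (as $s_p>0$ and $d^{T}n^{+}<0$). No separate tracking of the bound $t\,d^{T} n^{+}\le -s_p$ or of the evolution of $s_p$ is needed; the sign of $\tfrac{t}{2}-u_p^{+}$ is immediate from dual feasibility.
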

	
	\begin{proof}
		From Lemma~\ref{lemma positive}, Lemma~\ref{lemma linear}, and the condition $n_{p}^{T}H_{\alpha}n_{p} < 0$, it follows that $(z, \alpha, p)$ is a V-triple and $n_{p}^{T}H_{\bar{\alpha}}n_{p} < 0$ for some $\bar{\alpha} \subseteq \alpha$. 
		By Theorem~\ref{V-tri z} and Lemma~\ref{lemma of z}, each iteration either produces a new V-triple or an S-pair. 
		In each step, the objective value satisfies
		\begin{align*}
		f(\bar{z}) = f(z) + t d^{T}n^{+}\left(\tfrac{t}{2} - u_{p}^{+}(z)\right) \le 0.
		\end{align*}
		Since the active set $\alpha$ contains finitely many constraints and one constraint is dropped in each partial step, we eventually obtain a new S-pair $(\bar{z}, \bar{\alpha}\cup\{p\})$ in finitely many steps with $f(\bar{z}) < f(z)$.
	\end{proof}
	
	From Lemma~\ref{lemma positive}, the case $n_{p}^{T}H_{\alpha}n_{p} < 0$ is generic. 
	We now turn to the remaining case $n_{p}^{T}H_{\alpha}n_{p} \ge 0$ and show how to obtain a new S-pair or detect infeasibility, while again ensuring that the objective value decreases.
	
	\begin{theorem}\label{theorem >=}
		Let $(z, \alpha)$ be an S-pair and let $p \in K \setminus \alpha$ be an index such that
		\begin{align*}
		n_{p}^{T}H_{\alpha}n_{p} \ge 0 \quad \text{and} \quad s_{p}(z) > 0.
		\end{align*}
		Then a new S-pair $(\bar{z}, \bar{\alpha}\cup\{p\})$ can be obtained, where the step direction $d$ is defined by \eqref{definition of d}, $\bar{\alpha} \subseteq \alpha$, and $f(\bar{z}) < f(z)$ after $|\alpha| - |\bar{\alpha}|$ partial steps and one full step, 
		or $P(\bar{\alpha}\cup\{p\})$ is infeasible within at most $|\alpha|$ steps.
	\end{theorem}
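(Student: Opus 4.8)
The plan is to mirror the argument of Theorem~\ref{theorem <}, isolating the one feature that genuinely differs: since $d = H_{\alpha}n^{+}$ and $H_{\alpha}$ is symmetric (Proposition~\ref{Proportion of H}), we have $d^{T}n^{+} = n_{p}^{T}H_{\alpha}n_{p} \ge 0$, so by \eqref{definition of t2} the primal step length is $t_{2} = \infty$ and no progress on the $p$th constraint is possible at the outset. My strategy is therefore to run a sequence of \emph{pure dual steps} that drop constraints from $\alpha$ while keeping $z$ (and hence $f$) fixed, monitoring the scalar $n_{p}^{T}H_{\bar\alpha}n_{p}$, until it becomes negative; at that instant $(z,\bar\alpha,p)$ is a genuine V-triple and Theorem~\ref{theorem <} takes over.

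First I would analyze a single dual step. Starting from the S-pair representation $g(z) = -N_{\alpha_{+}}u_{\alpha_{+}}$ with $u_{p}=0$ and $u_{\alpha}\le 0$, I form $r = -N_{\alpha}^{\ast}n^{+}$ as in \eqref{definition of r}. If some component $r_{j}<0$, I take $t_{1}$ from \eqref{definition of t1}, drop the index $k$ attaining the minimum, and update the multipliers by \eqref{definition of u}; because the dropped multiplier $u_{k}$ is driven exactly to zero, the identity $H_{(\bar\alpha)_{+}}g(z)=0$ is preserved, the choice of $t_{1}$ preserves dual feasibility $u\le 0$, and $z$ is unchanged so $s_{i}(z)=0$ for $i\in\bar\alpha$ and $s_{p}(z)>0$ persist. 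Crucially, Lemma~\ref{lemma positive}(1) gives that $n_{p}^{T}H_{\bar\alpha}n_{p}\le n_{p}^{T}H_{\alpha}n_{p}$, so the monitored quantity is nonincreasing as constraints leave $\alpha$.

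Next I would settle the termination dichotomy. If at some dual step $r\ge 0$, then $t_{1}=\infty$, whence $t=\infty$, and the infeasibility certificate of Theorem~\ref{V-tri z}, Case~3 (which only uses the stationarity and feasibility relations together with $r\ge 0$ and $n^{+T}H n^{+}\ge 0$) shows $P(\bar\alpha\cup\{p\})$, and therefore Problem~\eqref{realPro}, is infeasible; this occurs within at most $|\alpha|$ steps. Otherwise, because $p\in K$ forces $n_{p}^{T}H_{\emptyset}n_{p}=n_{p}^{T}G^{-1}n_{p}<0$ and the quantity is monotone, after at most $|\alpha|$ drops we reach some $\bar\alpha\subseteq\alpha$ with $n_{p}^{T}H_{\bar\alpha}n_{p}<0$. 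At that point $\Gamma_{\bar\alpha}\succ 0$ by Lemma~\ref{lemma positive}(2), and $B_{\bar\alpha_{+}}$ is of full row rank by the contrapositive of Lemma~\ref{lemma linear}, so all of Definition~\ref{V-triple} holds and $(z,\bar\alpha,p)$ is a V-triple. Invoking Theorem~\ref{theorem <} then yields an S-pair $(\bar z,\bar\alpha'\cup\{p\})$ after finitely many further primal–dual partial steps and one full step; since $f$ was constant throughout the pure-dual phase and strictly decreases at the concluding full step, we obtain $f(\bar z)<f(z)$, with the total number of partial steps equal to $|\alpha|-|\bar\alpha'|$ as claimed.

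The main obstacle I anticipate is the bookkeeping at the interface between the two phases: I must verify carefully that each pure-dual step preserves \emph{exactly} the invariants $H_{(\cdot)_{+}}g(z)=0$ and $u\le 0$ that are needed to certify a V-triple the moment $n_{p}^{T}H_{\bar\alpha}n_{p}$ turns negative, and that the infeasibility certificate of Theorem~\ref{V-tri z} remains valid uniformly across the entire $n_{p}^{T}H n_{p}\ge 0$ regime. The monotonicity from Lemma~\ref{lemma positive}(1), anchored by the strict negativity at $\bar\alpha=\emptyset$ coming from $p\in K$, is what guarantees the phase terminates strictly before $\alpha$ is exhausted, and it is the linchpin of the finiteness count.
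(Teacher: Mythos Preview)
The obstacle you flag at the end is real and fatal to the plan as written. Your claim that ``the identity $H_{(\bar\alpha)_{+}}g(z)=0$ is preserved because the dropped multiplier $u_{k}$ is driven to zero'' conflates the algorithm's \emph{variable} $u^{+}$ with the \emph{function} $u_{\alpha_{+}}(z)=-N_{\alpha_{+}}^{\ast}g(z)$. The relation $H_{(\bar\alpha)_{+}}g(z)=0$ says that $g(z)$ lies in the range of $N_{(\bar\alpha)_{+}}$, and this depends only on $z$. Since $(z,\alpha)$ is an S-pair, $g(z)=-N_{\alpha}u_{\alpha}(z)$ with $u_{k}(z)<0$ in the generic case; keeping $z$ fixed leaves this coefficient of $n_{k}$ nonzero, so $g(z)$ is \emph{not} in the range of $N_{(\bar\alpha)_{+}}$, and the triple $(z,\bar\alpha,p)$ you reach when $n_{p}^{T}H_{\bar\alpha}n_{p}<0$ fails item~2 of Definition~\ref{V-triple}. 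Equation~\eqref{definition of u} is derived in Lemma~\ref{lemma of z} precisely under the move $\bar z=z+td$ and does not hold for fixed $z$ unless $d=0$; unlike the strictly convex setting, $d=H_{\alpha}n^{+}$ is generically nonzero here because $H_{\alpha}$ is indefinite.

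The paper therefore takes the primal move $z_{i}=z_{i-1}+t_{i}d_{i}$ with $t_{i}=t_{1}$ at every partial step, so that by Lemma~\ref{lemma of z} the actual multiplier $u_{k}^{+}(z_{i})$ vanishes and the invariant transfers to $z_{i}$. The price is that $f$ can \emph{increase} on a step with $d_{i}^{T}n^{+}=n_{p}^{T}H_{i}n_{p}\ge 0$ (Example~2, solution path~6, iteration~3 exhibits exactly this). Hence $f(\bar z)<f(z)$ is no longer immediate: the proof separates the steps into $q_{1}$ with $n_{p}^{T}H_{i}n_{p}\ge 0$ and $q_{2}$ with $n_{p}^{T}H_{i}n_{p}<0$, uses the terminal condition $s_{p}(\bar z)=0$ to control $\sum_{i} t_{i}\,n_{p}^{T}d_{i}$, and shows by a direct estimate that the negative contributions from the last $q_{2}$ steps outweigh the positive ones from the first $q_{1}$. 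That cumulative objective-value argument is the essential content your outline does not supply.
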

	
	\begin{proof}
		Since $(z, \alpha)$ is an S-pair, we have $H_{\alpha}g(z) = 0$ and $H_{\alpha_{+}}g(z) = 0$. 
		Likewise \eqref{definition of d}, \eqref{definition of t1} and \eqref{definition of z-}, in each iteration we define
		\begin{align*}
			d_{i} & = H_{i}n^{+},\\
			t_{i} & = \min\left\lbrace\min_{\substack{j \in\alpha\\
					r_{j}<0}}\left\lbrace \frac{u_{j}^{+}}{r_{j}}\right\rbrace, \infty\right\rbrace.
		\end{align*}
		and
		\begin{align*}
			z_{i} = z_{i - 1} + t_{i}d_{i}.
		\end{align*}
		where $H_{i}$, $z_{i}$, $t_{i}$ and $d_{i}$ denote the corresponding matrices or vectors at the $i$-th iteration, with the initialization $z_{0} = z$.
		
		If $t_{i} < \infty$ for all steps, then by Proposition~\ref{Proportion of H} we have $H_{\alpha_{+}}g(z_{1}) = 0$. 
		Using $u_{k}^{+}(z_{1}) = 0$ when $t_{1} = u_{k}^{+}(z_{0})/r_{k}$, we get $H_{\alpha_{+}\setminus\{k\}}g(z_{1}) = 0$. 
		From Lemma~\ref{lemma positive}, we have $\Gamma_{\alpha_{-}} \succ 0$. Since $\alpha$ contains finitely many indices and one constraint is removed per iteration, we reach a point $\hat{z}$ such that $n_{p}^{T}H_{\hat{\alpha}}n_{p} < 0$ within at most $|\alpha|$ steps, with $H_{\hat{\alpha}^{+}}g(\hat{z}) = 0$ and $\Gamma_{\hat{\alpha}} \succ 0$. 
		Thus $(\hat{z}, \hat{\alpha}, p)$ forms a V-triple, and by Theorems~\ref{V-tri z} and~\ref{theorem <}, we obtain a new S-pair $(\bar{z}, \bar{\alpha}\cup\{p\})$ in finitely many steps.
		
		If $t_{i} = \infty$ for some $i$, infeasibility of a subproblem can be shown as in the proof of Theorem~\ref{V-tri z}.
		
		Finally, we analyze the change in $f(z)$. 
		Suppose $(\hat{z}, \hat{\alpha}, p)$ is obtained after $q_{1}$ steps and $(\bar{z}, \bar{\alpha}\cup\{p\})$ after $q_{1}+q_{2}$ steps. 
		From Lemma~\ref{lemma positive}, we have
		\begin{align*}
		n_{p}^{T}H_{1}n_{p} \ge n_{p}^{T}H_{2}n_{p} \ge \cdots \ge n_{p}^{T}H_{q_{1}}n_{p} \ge 0 > n_{p}^{T}H_{q_{1}+1}n_{p} \ge \cdots \ge n_{p}^{T}H_{q_{1}+q_{2}}n_{p}.
		\end{align*}
		Consequently,
		\begin{align*}
		s_{p}(\bar{z}) = s_{p}(z) + n_{p}^{T}\sum_{i=1}^{q_{1}+q_{2}} t_{i}d_{i} = 0,
		\quad s_{p}(z) > 0,
		\end{align*}
		and
		\begin{align*}
			f(\bar{z}) - f(z) = & n_{p}^{T}\big(t_{1}d_{1}\frac{t_{1}}{2} + t_{2}d_{2}(\frac{t_{2}}{2} + t_{1}) + \cdots + t_{q_{1}}d_{q_{1}}(\frac{t_{q_{1}}}{2} + \sum_{i = 1}^{q_{1} - 1}t_{i}) \\
			& + t_{q_{1} + 1}d_{q_{1} + 1}(\frac{t_{q_{1} + 1}}{2} + T) + \cdots + t_{q_{1} + q_{2}}d_{q_{1} + q_{2}}(\frac{t_{q_{1} + q_{2}}}{2} + T + \sum_{i = q_{1} + 1}^{q_{1} + q_{2}} t_{i})\big),			
		\end{align*}
		where $T = \sum_{i = 1}^{q_{1}}t_{i}$. Since $n_{p}^{T}(t_{1}d_{1} + t_{2}d_{2} + \cdots + t_{q_{1} + q_{2}}d_{q_{1} + q_{2}}) < 0$ and $n_{p}^{T}H_{i}n_{p} < 0$ for each $i = q_{1} + 1, \cdots, q_{1} + q_{2}$, we have
		\begin{align*}
			f(\bar{z}) - f(z) & < n_{p}^{T}\left(t_{q_{1} + 1}d_{q_{1} + 1}(\frac{t_{q_{1} + 1}}{2}) + \cdots + t_{q_{1} + q_{2}}d_{q_{1} + q_{2}}(\frac{t_{q_{1} + q_{2}}}{2} + \sum_{i = q_{1} + 1}^{q_{1} + q_{2}} t_{i})\right)\\
			& \leq 0.
		\end{align*}
		which implies $f(\bar{z}) < f(z)$.
	\end{proof}

	Each time Step~1 of the dual algorithm~\ref{dual algorithm} is executed, the current point $z$ solves the subproblem $P(\alpha)$, i.e., $(z, \alpha)$ is an S-pair. 
	If $z$ satisfies all constraints of Problem~\eqref{realPro}, then $z$ is a local minimax point. 
	Otherwise, after at most $|\alpha|$ partial steps and one full step, a new S-pair $(\bar{z}, \bar{\alpha})$ is obtained and the process returns to Step~1, or infeasibility is detected according to Theorem~\ref{theorem >=} under Assumption~\ref{assumption}. 
	Since Problem~\eqref{realPro} admits only finitely many S-pairs, and by Theorems~\ref{theorem <}, \ref{theorem >=} each new S-pair yields a strict decrease in the objective value, the algorithm does not cycle.
	
	\begin{theorem}
		Under Assumption~\ref{assumption}, the dual algorithm either solves Problem~\eqref{realPro} or detects infeasibility in a finite number of steps.
	\end{theorem}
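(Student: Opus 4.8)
The plan is to read the theorem off as an assembly of the three preceding results, Theorems~\ref{V-tri z}, \ref{theorem <} and~\ref{theorem >=}, organized around a single loop invariant. First I would record the invariant that governs the outer loop: \emph{whenever the algorithm is at Step~1, the current pair $(z,\alpha)$ is an S-pair of the subproblem $P(\alpha)$.} The base case holds because Step~0 initializes with $z_{0} = -G^{-1}c$ and $\alpha = \emptyset$, and $(z_{0},\emptyset)$ is an S-pair of $P(\emptyset)$ (its optimality conditions \eqref{S-pair1}--\eqref{S-pair2} reduce to $Gz_{0}+c=0$ together with $\Gamma_{\emptyset}=G_{11}-G_{12}G_{22}^{-1}G_{12}^{T}\succ0$). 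The rest of the argument is then to show that a single pass through Steps~1--2 either terminates or restores this invariant at a strictly smaller objective value.

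For the inductive step I would analyze the dichotomy at Step~1. If $V=\emptyset$, then every candidate index $j\in K\setminus\alpha$ satisfies $s_{j}(z)\le0$; combined with the S-pair conditions, $z$ fulfills all the hypotheses of Corollary~\ref{corollary}, so $z$ is a local minimax point of Problem~\eqref{realPro} and the algorithm stops successfully. Otherwise a violated index $p\in V\subseteq K\setminus\alpha$ is selected, with $s_{p}(z)>0$ and $p\in K$, hence $n_{p}^{T}G^{-1}n_{p}<0$. I would then split on the sign of $n_{p}^{T}H_{\alpha}n_{p}$ and invoke Theorem~\ref{theorem <} when $n_{p}^{T}H_{\alpha}n_{p}<0$ and Theorem~\ref{theorem >=} when $n_{p}^{T}H_{\alpha}n_{p}\ge0$. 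In either case the inner loop, Step~2, with its partial and full steps driven by Theorem~\ref{V-tri z}, runs for at most $|\alpha|$ partial steps followed by one full step, and produces either a new S-pair $(\bar z,\bar\alpha\cup\{p\})$ with $f(\bar z)<f(z)$ or a certificate that a subproblem $P(\bar\alpha\cup\{p\})$ is infeasible. This is precisely where Assumption~\ref{assumption} enters: it guarantees that each intermediate subproblem $P(J)$ with $J\subseteq K$ admits a local minimax point, which legitimizes the V-triple machinery and the infeasibility test of Theorem~\ref{V-tri z}, Case~3.

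Finite termination I would then obtain from strict monotone descent. The values of $f$ attained at the successive S-pairs visited at Step~1 form a strictly decreasing sequence, because every re-entry to Step~1 is reached through Theorem~\ref{theorem <} or~\ref{theorem >=}, both of which guarantee $f(\bar z)<f(z)$. On the other hand there are only finitely many S-pairs: an S-pair is determined by its active set $\alpha\subseteq[m]$, of which there are at most $2^{m}$, and for each admissible $\alpha$ (one with $\{D_{i}:i\in\alpha\}$ linearly independent and $\Gamma_{\alpha}\succ0$) the point $z$ is uniquely recovered from the equality-constrained problem~\eqref{equality} via Lemma~\ref{lemma equality}. Strict monotonicity therefore forbids revisiting any S-pair, so Step~1 is executed only finitely often; since each intervening inner loop terminates in at most $|\alpha|+1$ steps, the total number of iterations is finite.

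The genuinely hard content lives in the three cited theorems, so the obstacle at this final stage is mostly bookkeeping: verifying that the loop invariant is preserved by every branch of Step~2, the full step, the partial step, and the dual-only constraint drop in Step~2(c)ii, and confirming that the no-cycling argument correctly couples the inner finiteness bound with the outer finiteness of the S-pair set. The one substantive point requiring care is the interface with Assumption~\ref{assumption}: I would make explicit that this assumption is what rules out the pathological situation noted just before it, in which a subproblem has nonempty feasible set yet no local minimax point, thereby ensuring that the ``infeasibility'' detected by the algorithm genuinely reflects the absence of a local minimax point rather than a breakdown of the method.
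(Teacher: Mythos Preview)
Your proposal is correct and follows essentially the same argument as the paper: maintain the loop invariant that $(z,\alpha)$ is an S-pair at each entry to Step~1, use Theorems~\ref{theorem <} and~\ref{theorem >=} to show that each outer iteration either terminates or produces a new S-pair with strictly smaller objective value after at most $|\alpha|+1$ inner steps, and conclude finite termination from the finiteness of the S-pair set. The paper's proof is the short paragraph immediately preceding the theorem statement, and your write-up is a more detailed elaboration of exactly that reasoning.
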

	
	\section{Numerically stable implementation}\label{section5}
	
	There exist various approaches to implement the dual algorithm described in Section~\ref{section4} in a numerically stable manner. 
	Our implementation is based on the Cholesky decomposition and Givens rotation.
	
	Since $N^{T}G^{-1}N$ is negative definite, we define its Cholesky factorization as
	\begin{align}
		-N^{T}G^{-1}N = R^{T}R,
	\end{align}
	and introduce
	\begin{align}
		M = R^{-T}N^{T}G^{-1}.
	\end{align}
	Then the matrices $H$ and $N^{\ast}$ can be written as
	\begin{align}
		N^{\ast} &= (N^{T}G^{-1}N)^{-1}N^{T}G^{-1} = -R^{-1}M,\\
		H &= G^{-1}(I - NN^{\ast}) = G^{-1} + M^{T}M.
	\end{align}
	Hence, instead of explicitly storing $H$ and $N^{\ast}$, it suffices to maintain $R$, $R^{-1}$, and $M$.
	
	In the dual algorithm of Section~\ref{section4}, the vectors $d$ and $r$ are required. They can be computed as
	\begin{align*}
		r &= -N^{\ast}n^{+} = R^{-1}Mn^{+},\\
		d &= Hn^{+} = (G^{-1} + M^{T}M)n^{+}.
	\end{align*}
	In practice, we perform the following computations:
	\begin{align*}
		d_{1} &= Mn^{+},\\
		d_{2} &= (G^{-1} + M^{T}M)n^{+},\\
		\delta &= n^{+T}(G^{-1} + M^{T}M)n^{+},\\
		r &= R^{-1}d_{1}.
	\end{align*}
	
	\paragraph{Adding a constraint.}
	Let $N_{+}$ denote the matrix obtained by adding one constraint column $n^{+}$ to $N$, and define the Cholesky factorization
	\begin{align*}
		-N_{+}^{T}G^{-1}N_{+} = R_{+}^{T}R_{+},
	\end{align*}
	that is,
	\begin{align*}
		R_{+}^{T}R_{+} = -\begin{bmatrix}
			N^{T}G^{-1}N & N^{T}G^{-1}n^{+}\\
			n^{+T}G^{-1}N & n^{+T}G^{-1}n^{+}
		\end{bmatrix}.
	\end{align*}
	Following the proof in Lemma~\ref{lemma positive}, one obtains
	\begin{align*}
		R_{+} = 
		\begin{bmatrix}
			R & -d_{1}\\
			0 & \sqrt{-\delta}
		\end{bmatrix},
		\qquad
		R_{+}^{-1} = 
		\begin{bmatrix}
			R^{-1} & \frac{R^{-1}d_{1}}{\sqrt{-\delta}}\\
			0 & \frac{1}{\sqrt{-\delta}}
		\end{bmatrix}.
	\end{align*}
	Consequently,
	\begin{align*}
		M_{+} 
		&= R_{+}^{-T}N_{+}^{T}G^{-1}
		= 
		\begin{bmatrix}
			R^{-T}N^{T}G^{-1}\\
			\frac{d_{1}^{T}R^{-T}N^{T}G^{-1} + n^{+T}G^{-1}}{\sqrt{-\delta}}
		\end{bmatrix}
		=
		\begin{bmatrix}
			M\\
			\frac{d_{2}^{T}}{\sqrt{-\delta}}
		\end{bmatrix}.
	\end{align*}
	Thus, after computing $d_{1}$, $d_{2}$, $\delta$, and $r$, the matrices $R$, $R^{-1}$, and $M$ can be efficiently updated to $R_{+}$, $R_{+}^{-1}$, and $M_{+}$, respectively.
	
	\paragraph{Dropping a constraint.}
	We first consider dropping the last constraint, that is, removing the last column of $N$.  
	Let $N = [\,N_{-}\; n\,]$ and define $-N_{-}^{T}G^{-1}N_{-} = R_{-}^{T}R_{-}$.  
	Then
	\begin{align*}
		R = 
		\begin{bmatrix}
			R_{-} & \ast\\
			0 & \ast
		\end{bmatrix}, \quad
		R^{-1} = 
		\begin{bmatrix}
			R_{-}^{-1} & \ast\\
			0 & \ast
		\end{bmatrix}, \quad
		M = 
		\begin{bmatrix}
			M_{-}\\
			\ast
		\end{bmatrix}.
	\end{align*}
	Therefore, deleting the last row and column of $R$ and $R^{-1}$ yields $R_{-}$ and $R_{-}^{-1}$, respectively, while deleting the last row of $M$ gives $M_{-}$.
	
	Next, consider dropping the $k$th constraint.  
	Let $N = \begin{bmatrix}
		N_{1} & N_{D} & N_{2}
	\end{bmatrix}$, where $N_{D}$ is the column to be deleted, and define $N_{-} = \begin{bmatrix}
		N_{1} & N_{2}
	\end{bmatrix}$.  
	Let $R = \begin{bmatrix}
		R_{1} & R_{D} & R_{2}
	\end{bmatrix}$, where $R_{D}$ corresponds to the $k$th column of $R$.  
	There exists a elementary matrix $P$ such that
	\begin{align*}
	\hat{N} = NP = \begin{bmatrix}
		N_{1} & N_{2} & N_{D}
	\end{bmatrix},
	\end{align*}
	which implies
	\begin{align*}
		-P^{T}N^{T}G^{-1}NP = P^{T}R^{T}RP,
	\end{align*}
	and
	\begin{align*}
	RP = \begin{bmatrix}
		R_{1} & R_{2} & R_{D}
	\end{bmatrix}.
	\end{align*}
	Since $\begin{bmatrix}
		R_{1} & R_{2}
	\end{bmatrix}$ is an upper-Hessenberg matrix, an upper triangular matrix
	\begin{align*}
	\hat{R} = \bar{Q}^{T}RP = \bar{Q}^{T}\begin{bmatrix}
		R_{1} & R_{2} & R_{D}
	\end{bmatrix}
	\end{align*}
	can be obtained by applying at most $|\alpha|$ Givens rotations, where 
	\(
	\bar{Q} = Q_{k-1,k}\cdots Q_{2,3}Q_{1,2}
	\)
	is the product of the Givens matrices.  
	It follows that
	\begin{align*}
	\hat{R}^{-1} = (\bar{Q}^{T}RP)^{-1} = P^{T}R^{-1}\bar{Q},
	\end{align*}
	and
	\begin{align*}
		\hat{M} 
		&= \hat{R}^{-T}\hat{N}^{T}G^{-1}
		= \bar{Q}^{T}R^{-T}PP^{T}N^{T}G^{-1}
		= \bar{Q}^{T}R^{-T}N^{T}G^{-1}
		= \bar{Q}^{T}M.
	\end{align*}
	Thus, using the elementary matrix $P$ and the Givens rotations $\bar{Q}$, the $k$th constraint can be moved to the last column of $N$, after which the removal process is equivalent to dropping the last column of $\hat{N}$.  
	Accordingly, deleting the last row and column of $\hat{R}$ and $\hat{R}^{-1}$ gives $R_{-}$ and $R_{-}^{-1}$, while deleting the last row of $\hat{M}$ gives $M_{-}$.
	
	In Step~0(b) of the dual algorithm~\ref{dual algorithm}, we need to evaluate $n_{i}^{T}H^{-1}n_{i}$ for all $i \in [m]$.  
	In practice, we first compute $G^{-1}n_{i}$ and then obtain $n_{i}^{T}H^{-1}n_{i}$ to identify the index set $K$.  
	The vectors $G^{-1}n_{i}$ for $i \in K$ are stored for later use in computing the step direction $d$. At each iteration, we compute $d_{1} = Mn^{+}$, then evaluate
	\begin{align*}
	d_{2} = M^{T}d_{1} + G^{-1}n_{p},
	\end{align*}
	which avoids $(n_{x} + n_{y})^{2}$ operations compared with computing $d_{2}$ directly.  
	Subsequently, we calculate $\delta = n_{p}^{T}d_{2}$ and $r = R^{-1}d_{1}$.  
	The step direction $d$ is given by $d_{2}$, the dual step length $t_{1}$ is obtained from $u$ and $r$, and the primal step length $t_{2}$ is determined from $s_{p}$ and $\delta$.  
	After $d$ and $t$ are computed, we determine the step space and perform the update, followed by updating $R$, $R^{-1}$, and $M$ to proceed to the next iteration.
	
	\section{Numerical results}\label{section6}
	This section presents numerical experiments to evaluate the proposed dual algorithm from both theoretical and computational perspectives. The experiments assess its numerical stability and efficiency across different problem settings. Three types of experiments are considered: the first assesses basic numerical properties on randomly generated problems, illustrating its stability and computational effectiveness; the second applies the algorithm to an adversarial attack on a mean–covariance portfolio model, demonstrating its practical effectiveness; and the third presents two illustrative examples under distinct assumptions, highlighting the detailed iterative behavior of Algorithm~\ref{dual algorithm}.
	
	\subsection{Randomly generated test problems}
	In this subsection, we demonstrate the performance of the proposed dual algorithm on randomly generated quadratic minimax problems with coupled inequality constraints. Two classes of test problems were constructed such that their optimal solutions were known in advance, following the procedures inspired by Goldfarb and Idnani~\cite{goldfarbNumericallyStableDual1983} and Rosen and Suzuki~\cite{rosenPracniquesConstructionNonlinear1965}. 
	
	Each problem instance was characterized by the number of variables in $x$ and $y$, denoted by $n_{x}$ and $n_{y}$, respectively, the total number of constraints $m$, and the number of constraints in the active set at the local minimax point, $n_{a}$. The off-diagonal elements of $G_{22}$ were generated as $r(-1,1)$, and the diagonal elements were set to $-S_{i}^{-} - r(0,1) - 1$, where $r(a,b)$ denotes a uniformly distributed random number between $a$ and $b$, and $S_{i}^{-}$ represents the sum of the absolute values of the off-diagonal elements in the $i$th row of $G_{22}$. Each element of $G_{12}$ was set to $r(-1,1)$. The off-diagonal entries of $\Gamma_{\emptyset} = G_{11} - G_{12}G_{22}^{-1}G_{12}^{T}$ were set to $r(-1,1)$, while the diagonal elements were set to $S_{i}^{+} + r(0,1) + 1$, where $S_{i}^{+}$ denotes the sum of the absolute values of the off-diagonal elements in the $i$th row of $\Gamma_{\emptyset}$. The local minimax point $z^{\ast}$ was initialized as $r(-5,5)$, and the optimal dual variables $u_{i}$, for $i \in \alpha^{\ast}$, were chosen as $r(-30,0)$. The constraint values were set as $s_{i} = 0$ for $i \in \alpha^{\ast}$ and $s_{j} = r(-1,0)$ for $j \notin \alpha^{\ast}$. The two types of problems differ primarily in the generation of $G_{11}$ and $D$. All Type~1 problems satisfy Assumption~\ref{assumption}, whereas all Type~2 problems satisfy Assumption~\ref{assumption2}.
	
	For Type~1 problems, all elements of the matrix $D$ was set to $r(-1,1)$, and the rows of $D$ were normalized to unit length. The matrix $B_{\alpha^{\ast}}$ was constructed to be of full row rank. Then,
	\begin{align*}
		G_{11} = \Gamma_{\emptyset} + 
		\begin{bmatrix}
			G_{12} & A_{\alpha^{\ast}}^{T}
		\end{bmatrix}
		\begin{bmatrix}
			G_{22} & B_{\alpha^{\ast}}^{T} \\
			B_{\alpha^{\ast}} & 0
		\end{bmatrix}^{-1}
		\begin{bmatrix}
			G_{12}^{T} \\ A_{\alpha^{\ast}}
		\end{bmatrix}.
	\end{align*}
	Finally, we set $h = s - Dz^{\ast}$ and $c = -D^{T}u - Gz^{\ast}$.
	
	For Type~2 problems, $G_{11}$ was computed as
	\begin{align*}
		G_{11} = \Gamma_{\emptyset} + G_{12}G_{22}^{-1}G_{12}^{T}.
	\end{align*}
	Let $Q$ denote the eigenvector matrix from the eigendecomposition of $G$. The auxiliary matrix $D_{t}$ was generated with entries $r(-1,1)$, and $D$ was obtained as $D = Q D_{t}$. Then, we set $h = s - Dz^{\ast}$ and $c = -D^{T}u - Gz^{\ast}$.

	The dual algorithm always adds the most violated constraint to the active set. When solving Type~1 problems, if the algorithm detects an infeasible subproblem, the instance is discarded since it does not satisfy Assumption~\ref{assumption}. Note that verifying Assumption~\ref{assumption} is generally difficult, and therefore, retaining an instance does not guarantee that it satisfies this assumption. All Type~2 problems inherently satisfy Assumption~\ref{assumption2}.
	
	All problems of the same scale were repeated 20 times, and the average value was used to represent the performance of that scale (except for numerical error). We denote by $\hat{z}$ and $\hat{u}$ the primal and dual solutions obtained by the proposed algorithm, respectively. The reported errors $\|z^{\ast} - \hat{z}\|_{2}$ and $\|u^{\ast} - \hat{u}\|_{2}$ correspond to the largest deviations among 20 instances of the same scale. All computations were performed using \textsc{MATLAB} R2023a on a PC equipped with an Intel(R) Core(TM) i9-12900K 3.20~GHz CPU and 128~GB RAM.
 
	The numerical results are summarized in Table~\ref{table1} and Figures~\ref{figure1},\ref{figure2}. Table~\ref{table1} presents the performance of the dual algorithm for problems of different scales. All tested problems were of Type~2; the reason for this choice is discussed in Figure~\ref{figure2}. The column “Number of basic changes” reports how many times the active set was modified, where the first number indicates additions to the active set and the number in parentheses indicates deletions. We also report the total number of arithmetic operations (multiplications + divisions + $10\times$ square roots) required by the algorithm. As shown in Table~\ref{table1}, both $\|z^{\ast} - \hat{z}\|_{2}$ and $\|u^{\ast} - \hat{u}\|_{2}$ remain below ${\rm 1e-9}$ even for large scale problems, confirming that our implementation of the dual algorithm is numerically stable. Notably, for Problem~6, the total runtime is approximately 2.5 hours and the total number of operations is about ${\rm 5e+16}$, which is acceptable comparing with the dual active set method for quadratic programming problems of similar size.
	
	\begin{longtable}{>{\centering\arraybackslash}p{1.5cm}  
			>{\centering\arraybackslash}p{0.6cm}  
			>{\centering\arraybackslash}p{0.6cm}  
			>{\centering\arraybackslash}p{0.6cm}  
			>{\centering\arraybackslash}p{0.6cm}  
			>{\centering\arraybackslash}p{2.2cm}  
			>{\centering\arraybackslash}p{1.8cm}  
			>{\centering\arraybackslash}p{1.8cm}  
			>{\centering\arraybackslash}p{1.8cm}  
			>{\centering\arraybackslash}p{1.8cm}  
		}
		\caption{The computing results for the problems of Type2}
		\label{table1}\\
		\toprule
		\begin{tabular}{l}
			Problem\\
			index
		\end{tabular} & \multicolumn{4}{l}{Problem scale} &\multicolumn{5}{l}{Computing results}\\
		\cmidrule(lr){2-5} \cmidrule(lr){6-10}
		& $n_{x}$ & $n_{y}$ & $m$ & $n_{a}$ & number of basis changes & time(s) & operations(in $10^{8}$s) & $\|z^{\ast} - \hat{z}\|_{2}$ & $\|u^{\ast} - \hat{u}\|_{2}$\\
		\midrule
		1 & 100 & 200 & 300 & 100 & 105.5(5.5) & 0.0065 & 6 & 5.923e-14 & 5.586e-12\\
		2 & 1000 & 500 & 1500 & 250 & 299.65(49.65) & 0.2945 & 1321 & 2.314e-13 & 2.177e-12\\
		3 & 1000 & 1000 & 2000 & 500 & 539.25(39.25) & 0.9421 & 6624 & 3.026e-13 & 5.041e-12\\
		4 & 1000 & 2000 & 3000 & 1000 & 1043.6(43.6) & 4.8299 & 52822 & 4.455e-13 & 1.242e-11\\
		5 & 5000 & 10000 & 15000 & 5000 & 5234.05(234.05) & 902.9232 & 33056585 & 2.502e-12 & 1.692e-10 \\
		6 & 10000 & 20000 & 30000 & 10000 & 10450.5(450.5) & 9260.9908 & 525168646 & 4.993e-12 & 4.721e-10 \\
		\bottomrule
	\end{longtable}

	Figure~\ref{figure1} reports the computational time per iteration for a representative Type~2 problem with dimensions $n_{x} = 1000$, $n_{y} = 2000$, $m = 3000$, and $n_{a} = 1000$. As shown in Figure~\ref{figure1(a)}, iterations involving constraint deletions require significantly more time than those without any deletion. For this problem, the longest iteration that includes a constraint deletion is approximately $72$ times slower than the fastest iteration without deletion, and this ratio tends to increase with problem size. The difference primarily arises because iterations without constraint deletions involve only one full step, whereas iterations with deletions contain both one full step and at least one partial step. Figure~\ref{figure1(b)} compares the computational time of full and partial steps. It is evident that partial steps consume substantially more time than full steps, leading to similar trends observed in Figures~\ref{figure1(a)} and~\ref{figure1(b)}. The increased cost of partial steps is mainly due to the Givens rotations required when updating the active set. For an iteration with active set $\alpha$ containing $|\alpha|$ elements, the computational complexity of adding a constraint is approximately $\mathcal{O}(|\alpha|^{2} + |\alpha|n)$, where $n = n_{x} + n_{y}$. In contrast, performing Givens rotations during a constraint deletion has complexity $\mathcal{O}((|\alpha| - k)^{2})$, where $k$ denotes the position of the constraint to be removed in the active set. Since partial steps require these Givens rotations whereas full steps do not, the computational behavior in Figures~\ref{figure1(a)} and~\ref{figure1(b)} closely follows the same pattern.

	\begin{figure}
		\centering
		\subfigure[The time of iteration number of S-pair]{
			\label{figure1(a)}
			\includegraphics[scale=0.4]{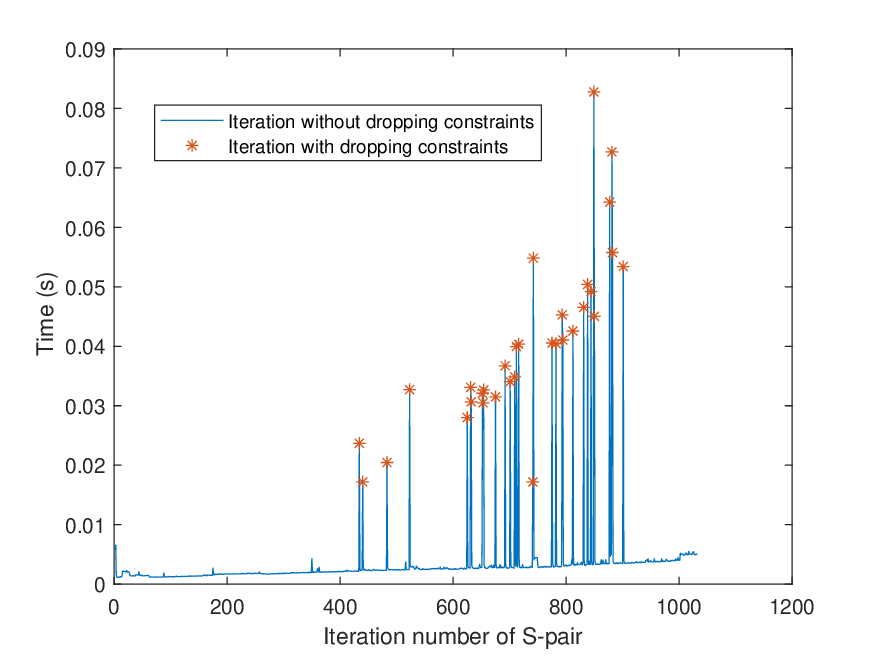}
		}
		\subfigure[The time of full (partial) step]{
			\label{figure1(b)}
			\includegraphics[scale=0.4]{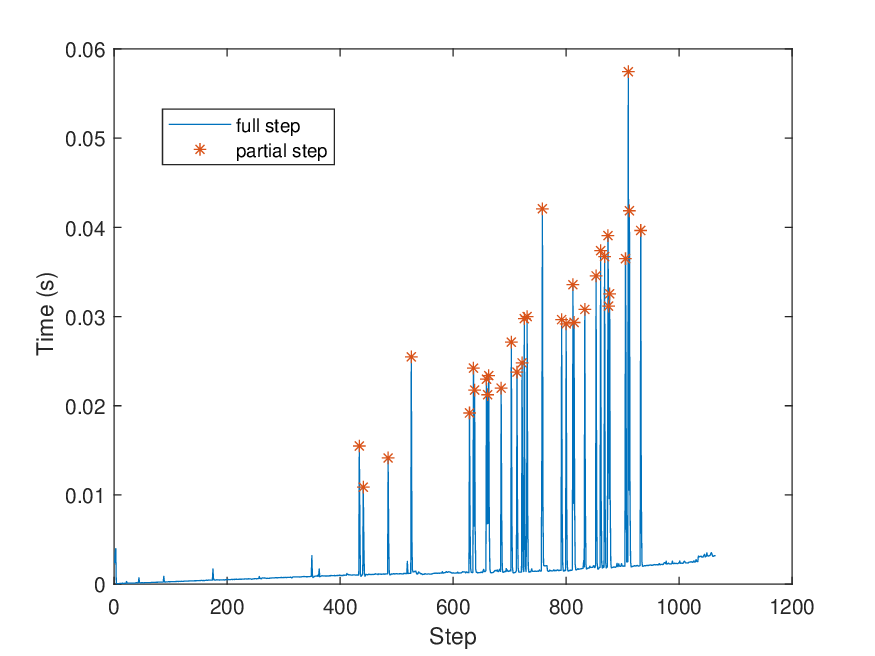}
		}
		
		\caption{Iteration number of S-pair and step time for a given problem.}
		\label{figure1}
	\end{figure}

	Figure~\ref{figure2} compares the number of constraint deletions and the total solution time for Type~1 and Type~2 problems as the problem scale increases. As shown in Figure~\ref{figure2(a)}, the number of constraint deletions grows approximately linearly with the problem scale. In Figure~\ref{figure2(b)}, the total computational time for both problem types increases between $\mathcal{O}(i^{3})$ and $\mathcal{O}(i^{4})$, where $n_{x} = i$, $n_{y} = 2i$, $m = 3i$, and $n_{a} = i$. It is observed that both the number of deletions and the total solution time of Type~1 problems are significantly smaller than those of Type~2 problems. This difference primarily arises from the way the two problem types are generated. All randomly generated Type~2 problems satisfy Assumption~\ref{assumption2}, meaning that all experimental results are retained. In contrast, some randomly generated Type~1 problems violate Assumption~\ref{assumption}, leading to the exclusion of infeasible cases from the reported results. Extensive experiments indicate that it is difficult to construct a Type~1 problem whose performance is worse than that of a Type~2 problem of the same scale. This observation also explains why only Type~2 problems are considered in Table~\ref{table1}.
	
	\begin{figure}
		\centering
		\subfigure[Trend of constraint deletion count]{
			\label{figure2(a)}
			\includegraphics[scale=0.4]{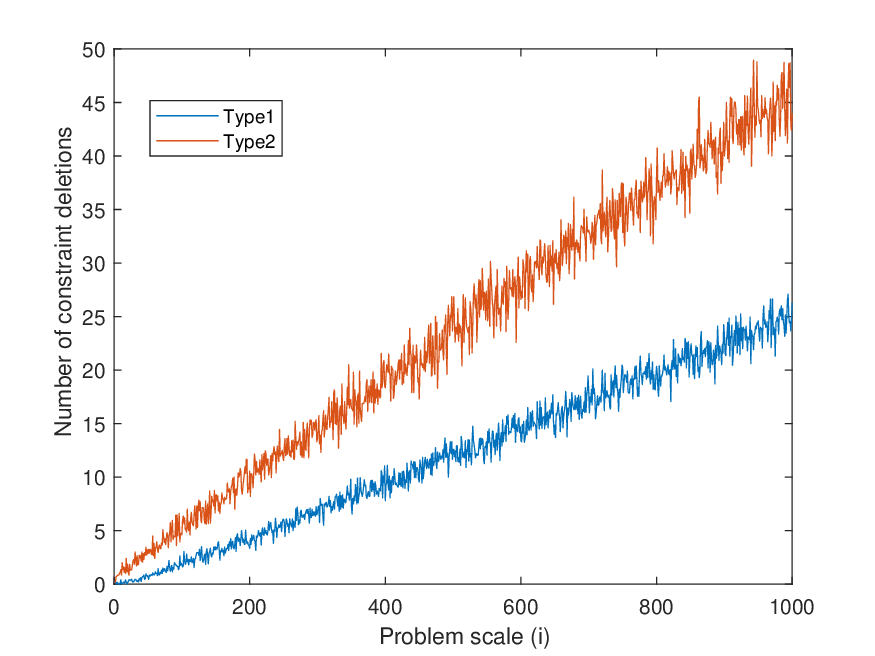}
		}
		\subfigure[Trend of computation time]{
			\label{figure2(b)}
			\includegraphics[scale=0.4]{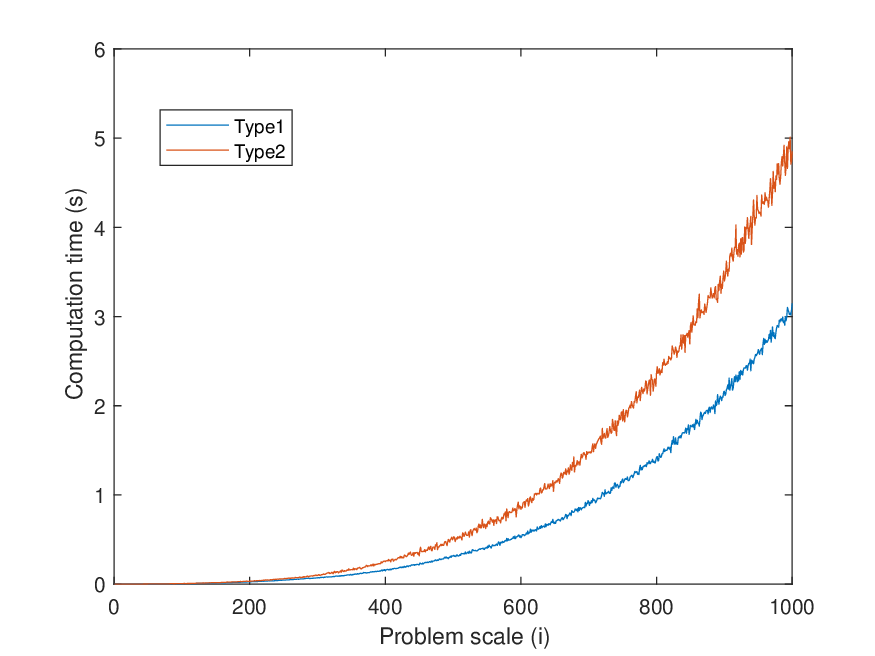}
		}
		
		\caption{The trend of constraints deletion count and computation time for the problems of two types with the scale $n_{x} = i, n_{y} = 2i, m = 3i, n_{a} = i$.}
		\label{figure2}
	\end{figure}
	
	\subsection{Adversarial attack on a mean-covariance portfolio model}
	In this subsection we evaluate the proposed local minimax point and the dual algorithm~\ref{dual algorithm} by constructing adversarial attacks against a mean-covariance portfolio model~\cite{markowitzPortfolioSelection1952,mertonAnalyticDerivationEfficient1972}. The attacker (adversary) chooses $x\in\mathbb{R}^{n}$ to degrade the investor's performance, while the investor chooses $y\in\mathbb{R}^{n}$ to maximize return adjusted for risk. The adversarial problem is formulated as
	\begin{align}\label{eq:attack}
		\begin{aligned}
			\min_{x\in\mathbb{R}^{n}}\;\max_{y\in\mathbb{R}^{n}}\quad & f(x,y)
			= \frac{1}{2}x^{T}Hx + x^{T}W y - \frac{1}{2}y^{T}\Sigma_{y}y + \mu^{T}y,\\
			\text{s.t.}\quad & s_{i}(x,y) = x_{i} + y_{i} - (12 - b) \leq 0,\quad i=1,\dots,n,
		\end{aligned}
	\end{align}
	where $n$ denotes the number of assets. The objective function $f(x,y)$ captures a simple mean-covariance trade-off for the investor $-\frac{1}{2} y^{T}\Sigma_{y}y + \mu^{T}y$, while the adversary implements an attack via the vector $x$. The adversary incurs a quadratic cost $\frac{1}{2}x^{T}Hx$ and interacts with the investor via $x^{T}Wy$, which models how adversarial actions affect the investor's decisions. The constraint $x_{i}+y_{i}\le 12-b$ models a limited liquidity budget: a larger $b$ yields a tighter feasible set, i.e. a smaller value of $12-b$, which reflects deteriorated market liquidity. The parameter $b$ is referred to as the liquidity intensity parameter.
	
	We use the constituents of the NASDAQ-100 index for the year 2024 (252 trading days), hence $n=100$. For each asset $i$, we collect its daily closing prices $p_{t, i}$ and daily trading volumes over the period from 2024-01-01 to 2024-12-31, where $p_{t,i}$ denotes the closing price of asset $i$ on trading day $t$. Based on the price series, we compute the daily arithmetic returns as $R_{t,i}=(p_{t,i}-p_{t-1,i})/p_{t-1,i}$ and define
	\begin{align*}
		\mu := \frac{1}{T - 1}\sum_{t=2}^{T} R_{t}, \qquad
		\Sigma_{y} := \frac{1}{T-1}\sum_{t=2}^{T}(R_{t}-\mu)(R_{t}-\mu)^{T},
	\end{align*}
	with $T=252$. Let $\mathrm{ADV}_{i}$ denote the average daily trading volume of asset $i$. The liquidity weighting matrix is
	\begin{align*}
		W := \operatorname{diag}\big(1/\mathrm{ADV}_{1},\dots,1/\mathrm{ADV}_{n}\big).
	\end{align*}
	The attacker matrix is chosen as $H = I_{n}$ (the $n\times n$ identity). To make the three matrices $H$, $W$ and $\Sigma_{y}$ comparable in magnitude we normalize them using $M_{\mu}:=\|\mu\|_{1}$. Specifically we apply the scalings
	\begin{align*}
		H \leftarrow\frac{0.1M_{\mu}}{\|H\|_{1}} \cdot H,\qquad
		W \leftarrow \frac{M_{\mu}}{\|W\|_{1}} \cdot W,\qquad
		\Sigma_{y}\leftarrow \frac{0.2M_{\mu}}{\|\Sigma_{y}\|_{1}} \cdot \Sigma_{y},
	\end{align*}
	so that $\|H\|_{1}=0.1M_{\mu}$, $\|W\|_{1}=M_{\mu}$ and $\|\Sigma_{y}\|_{1}=0.2M_{\mu}$.
	
	We compare the proposed minimax attack (computed by the dual algorithm) with three baselines:
	\begin{enumerate}
		\item \textbf{Random attack.}  An attack vector $x$ is generated at random. The attack $x$ is drawn from the uniform distribution $r(b-12, 12-b)$ where $r(a, b)$ denotes a uniformly distributed random number between $a$ and $b$.
		
		\item \textbf{No-long (forbid long positions) attack.} Identify the 20 assets with the largest historical average daily return $\mu_{i}$. For each such asset set $x_{i} = 12-b$, which forces $y_{i}\le 0$ by the constraint $x_{i}+y_{i}\le 12-b$ and thus prevents the investor $y$ from taking long positions in these high-return assets. For all other assets set $x_{i}=0$.
		
		\item \textbf{NI attack.} We reformulate the two-player zero-sum game using a Nikaido–Isoda function~\cite{vonheusingerOptimizationReformulationsGeneralized2009}, which is a widely used approach in the analysis of generalized Nash equilibrium problems. Let $z=(x,y)$, $\hat z=(\hat x,\hat y)$ and $\gamma > 0$, define Nikaido-Isoda function
		\begin{align*}
			\Psi_{\gamma}(z,\hat z)=f(x,\hat y)-f(\hat x,y) - \frac{\gamma}{2}\|z - \hat{z}\|^{2},
		\end{align*}
		and the associated value function
		\begin{align*}
			\hat{V}_{\gamma}(z)=\max_{s(\hat z)\le 0}\left\lbrace\,\Psi(z,\hat z)-\frac{\gamma}{2}\|z-\hat z\|^{2}\right\rbrace.
		\end{align*}
		The NI attack is then obtained by minimizing $\hat{V}_{\gamma}(z)$ with respect to $z$.
	\end{enumerate}
	
	Let $x_{att}$ denote the adversary's decisions after an attack and $x_{be} = 0$ denote the no attack baseline. For a given attack vector $x$, we solve the maximization problem
	\begin{align*}
		q(x):=\max_{y \in Y(x)} f(x,y),\quad Y(x) = \left\lbrace y \mid x + y \leq 12-b \right\rbrace,
	\end{align*}
	which ensures the investor's optimal response under the imposed attack. We measure the effectiveness of an attack by the relative reduction:
	\begin{align*}
		\rho := \frac{q(x_{be})-q(x_{att})}{|q(x_{be})|}.
	\end{align*}
	The larger values of $\rho$ indicate a more effective attack relative to the no attack baseline. For cases in which the attack fails to reduce the objective, $\rho$ is set to zero.
	
	The random attack was executed over $2000$ independent trials. The trial achieving the largest value of $\rho$ was used for evaluation. The NI attack minimizes $\hat{V}_{\gamma}(z)$ with respect to $z$ using a Gauss–Newton method~\cite{nocedalNumericalOptimization2006}, with the parameter set to $\gamma = 0.01$. For all attack methods, the maximization problems $q(x)$ were solved using the \texttt{quadprog} function in \textsc{MATLAB}. All computations were performed using \textsc{MATLAB} R2023a on a PC equipped with an Intel(R) Core(TM) i9-12900K 3.20~GHz CPU and 128~GB RAM.
	
	The experimental results are reported in Figure~\ref{figure3}. Compared with the three baseline methods, the proposed minimax attack demonstrates clear effectiveness. Extensive numerical experiments reveal that the random attack generally fails to produce noticeable impact. Both the random and no-long attacks are consistently ineffective, primarily because the adversary incurs a quadratic cost term $\frac{1}{2}x^{T}Hx$, which constrains the set of admissible strategies. The NI attack remains effective but exhibits inferior performance relative to the proposed minimax attack. When $0\leq b \leq \tilde{b}$, the solution $z^{\ast}$ of the unconstrained problem~\eqref{eq:attack} satisfies $s(z^{\ast}) \leq 0$. In this regime, the minimax attack and NI attack exhibit similar levels of relative reduction $\rho$. When $\tilde{b} < b \leq 12$, both the attacker $x$ and the investor $y$ are influenced by the liquidity constraints. In this range, the relative reduction $\rho$ of the minimax attack first decreases and then increases. This behavior is explained by Figure~\ref{figure3(b)}, which depicts the expected return $q$ (i.e., the objective function value) before and after the attack. The non-monotonic trend in $\rho$ arises because the rate of decline in $q$ differs between the before and after attack, causing $\rho$ to initially decrease and subsequently increase. As liquidity becomes nearly exhausted ($b$ approaching $12$), the minimax attack exhibits pronounced effectiveness. Moreover, the experiments indicate that as market liquidity deteriorates corresponding to smaller values of $(12-b)$, a larger number of variables enter the active set, which is consistent with empirical observations in real markets.
	
	\begin{figure}
		\centering
		\subfigure[Relative reduction $\rho$ with respect to liquidity intensity parameter $b$.]{
			\label{figure3(a)}
			\includegraphics[scale=0.4]{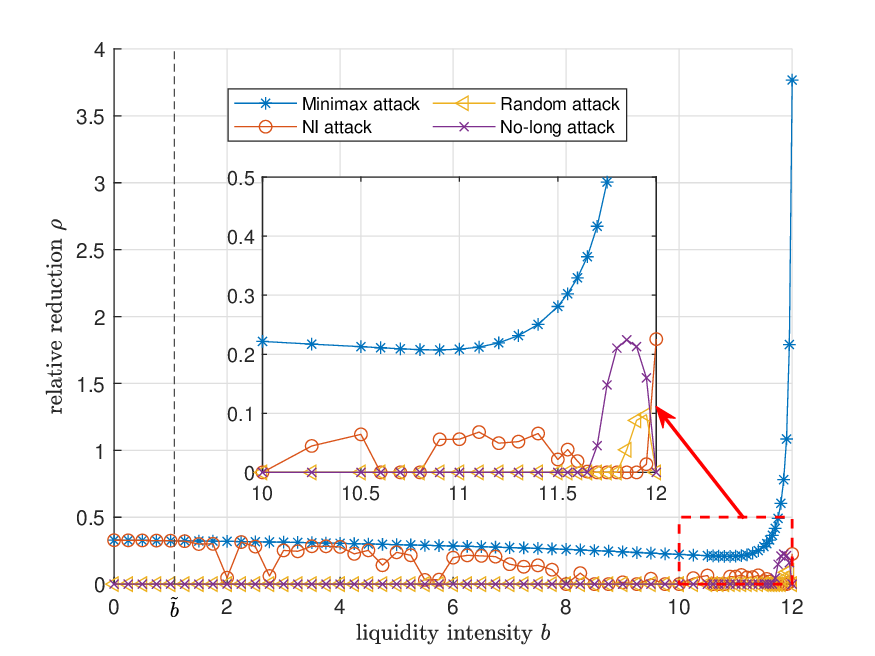}
		}
		\subfigure[Expected return $q$ with respect to liquidity intensity parameter $b$.]{
			\label{figure3(b)}
			\includegraphics[scale=0.4]{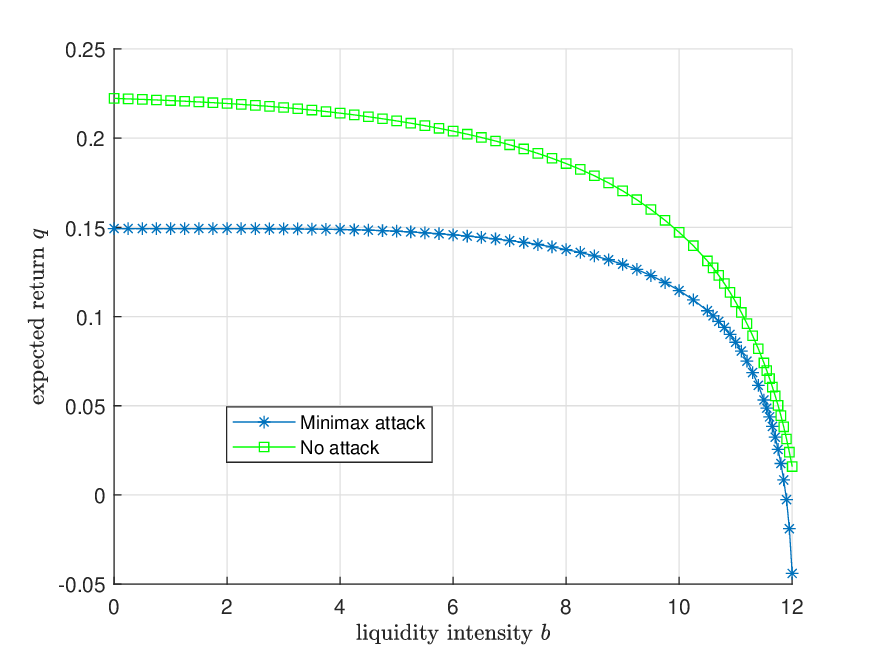}
		}
		
		\caption{Relative reduction $\rho$ and expected return $q$ with respect to liquidity intensity parameter $b$.}
		\label{figure3}
	\end{figure}
	
	\subsection{Illustrative examples}
	To demonstrate the different components of the dual algorithm~\ref{dual algorithm}, we apply it to two representative test problems. 
	Example~1 satisfies Assumption~\ref{assumption2}, that is, the matrix $DG^{-1}D^{T}$ is negative semi-definite. 
	In contrast, Example~2 satisfies Assumption~\ref{assumption} but $DG^{-1}D^{T}$ is not negative semi-definite.

	Example 1:
	\begin{align}
		\left\lbrace
		\begin{aligned}
			\min_{x\in\mathbb{R}^{n_{x}}}\max_{y\in\mathbb{R}^{n_{y}}}\quad& f(x, y) = \frac{1}{2}x^{T}G_{11}x + x^{T}G_{12}y + \frac{1}{2}y^{T}G_{22}y + c_{x}^{T}x + c_{y}^{T}y,\\
			{\rm s.t.}\quad & s(x, y) = Ax + By + h \leq 0.
		\end{aligned}\right.
	\end{align}
	where
	\begin{align*}
		G_{11} & = \begin{bmatrix}
			-1 & -1\\
			-1 & 0
		\end{bmatrix}, \quad G_{12} = \begin{bmatrix}
			2 &	-1 & 0 & 0\\
			1 &	0 &	-1 & 1
		\end{bmatrix}, \quad G_{22} = \begin{bmatrix}
			-3 & 1 & 0 & 1\\
			1 & -1 & 1 & -1\\
			0 & 1 & -2 & 2\\
			1 & -1 & 2 & -4
		\end{bmatrix},\\
		A & = \begin{bmatrix}
			0 & -1\\
			1 & 0\\
			1 & 1\\
			0 & 0\\
			1 & 0
		\end{bmatrix},\quad B = \begin{bmatrix}
			0 & 1 & -2 & 1\\
			2 & 0 & 0 & 1\\
			2 & -1 & 2 & 0\\
			0 & 0 & 0 & 1\\
			2 & 0 & 0 & -2
		\end{bmatrix},\\
		c_{x}^{T} & = \begin{bmatrix}
			6 & 6
		\end{bmatrix}, \quad c_{y}^{T} = \begin{bmatrix}
		0 & 1 & 4 & -4
		\end{bmatrix},\quad h^{T} = \begin{bmatrix}
			-4 & -1 & 2 & -1 & -8
		\end{bmatrix}.
	\end{align*}
	It is easy to check
	\begin{align*}
		DG^{-1}D^{T} = \begin{bmatrix}
			-3 & 3 & 6 & 1 & 0\\
			3 & -12 & -15 & -3 & -3\\
			6 & -15 & -21 & -4 & -3\\
			1 & -3 & -4 & -1 & 0\\
			0 & -3 & -3 & 0 & -3
		\end{bmatrix} \preccurlyeq 0.
	\end{align*}
	We have 
	\begin{align*}
	z_{0}^{T} = -c^{T}G^{-1} = 
	\begin{bmatrix}
		0 & -1 & 12 & 31 & 24 & 6
	\end{bmatrix},
	\end{align*}
	and thus obtain the initial S-pair $(z_{0}, \emptyset)$. 
	Table~\ref{solution path1} presents several representative solution paths for Example~1. 
	Solution paths~1–3 illustrate different possible trajectories from the starting point $z_{0}$ to the optimal solution $z^{\ast}$. 
	Since the violated constraint selected in the first iteration differs among the three paths, their step directions in iteration~1 are also distinct. 
	Among them, solution path~2 requires the fewest iterations, as its active set is $\{3\}$—the violated constraint chosen in the first iteration. 
	In solution path~3, we observe that $\|d\| = 0$ while $t > 0$ in iteration~3, indicating that a nonzero step is taken in the dual space, whereas no movement occurs in the primal space. 
	For all possible solution paths, it can be readily verified that the objective function value decreases in every iteration and strictly decreases in each S-pair iteration.

	\begin{longtable}{>{\centering\arraybackslash}p{1.2cm}  
			>{\centering\arraybackslash}p{1.2cm}  
			>{\centering\arraybackslash}p{1.2cm}    
			>{\centering\arraybackslash}p{0.3cm}  
			>{\centering\arraybackslash}p{0.8cm}  
			>{\centering\arraybackslash}p{1.1cm}  
			>{\centering\arraybackslash}p{0.3cm}  
			>{\centering\arraybackslash}p{1.1cm}  
			>{\centering\arraybackslash}p{0.8cm}  
			>{\centering\arraybackslash}p{0.3cm}  
			>{\centering\arraybackslash}p{0.3cm}  
			>{\centering\arraybackslash}p{0.3cm}  
			>{\raggedright\arraybackslash}p{3.6cm} 
		}
		\caption{Solution paths of the dual algorithm for Example 1}\label{solution path1}\\
		\toprule
		Iteration & \multicolumn{5}{l}{At the start of iteration} &\multicolumn{6}{l}{Computed during iteration} & Remarks \\
		\cmidrule(lr){2-6} \cmidrule(lr){7-12}
		& $z$ & $s$ & $f$ & $\alpha$ & $u$ & $p$ & $d$ & $r$ & $t_{1}$ & $t_{2}$ & $k$ &  \\
		\midrule
		\multicolumn{13}{l}{Solution path 1: $z_{0} \rightarrow z_{4} \rightarrow z_{5} \rightarrow z^{\ast}$.}\\
		\midrule
		1 & $\begin{bmatrix}
			0\\
			-1\\
			12\\
			31\\
			24\\
			6
		\end{bmatrix}$ & $\begin{bmatrix}
			-14\\
			29\\
			42\\
			5\\
			4
		\end{bmatrix}$ & $\frac{97}{2}$ & $\emptyset$ & $\begin{bmatrix}
			0\\
			0\\
			0\\
			0\\
			0
		\end{bmatrix}$ & $2$ & $\begin{bmatrix}
			1\\
			0\\
			-5\\
			-12\\
			-9\\
			-3
		\end{bmatrix}$ & $-$ & $\infty$ & $\frac{29}{12}$ & $-$ & \begin{tabular}{l}
			full step:\\
			add constraint $2$
		\end{tabular}\\
		2 & $\begin{bmatrix}
		 \frac{29}{12}\\  
			-1\\
			-\frac{1}{12}\\
			2\\
			\frac{9}{4}\\
			-\frac{5}{4}
		\end{bmatrix}$ & $\begin{bmatrix}
			-\frac{27}{4}\\
			0\\
			\frac{23}{4}\\
			-\frac{9}{4}\\
			-\frac{13}{4}
		\end{bmatrix}$ & $\frac{323}{24}$ & $\left\lbrace 2\right\rbrace$ & $\begin{bmatrix}
			0\\
			-\frac{29}{12}\\
			0\\
			0\\
			0
		\end{bmatrix}$ & $3$ & $\begin{bmatrix}
			-\frac{1}{4}\\
			0\\
			\frac{1}{4}\\
			1\\
			-\frac{3}{4}\\
			-\frac{1}{4}
		\end{bmatrix}$ & $\begin{bmatrix}
			-\frac{5}{4}
		\end{bmatrix}$ & $\frac{29}{15}$ & $\frac{23}{9}$ & $2$ & \begin{tabular}{l}
			partial step:\\
			drop constraint $2$
		\end{tabular}\\
		3 & $\begin{bmatrix}
			\frac{29}{15}\\
			-1\\
			\frac{2}{5}\\
			\frac{59}{15}\\
			\frac{4}{5}\\
			-\frac{26}{15}
		\end{bmatrix}$ & $\begin{bmatrix}
			-\frac{12}{5}\\
			0\\
			\frac{7}{5}\\
			-\frac{41}{15}\\
			-\frac{9}{5}
		\end{bmatrix}$ & $\frac{694}{75}$ & $\emptyset$ & $\begin{bmatrix}
			0\\
			0\\
			-\frac{29}{15}\\
			0\\
			0
		\end{bmatrix}$ & $3$ & $\begin{bmatrix}
			1\\
			0\\
			-6\\
			-14\\
			-12\\
			-4
		\end{bmatrix}$ & $-$ & $\infty$ & $\frac{1}{15}$ & $-$ & \begin{tabular}{l}
			full step:\\
			add constraint $3$
		\end{tabular}\\
		4 & $\begin{bmatrix}
			2\\
			-1\\
			0\\
			3\\
			0\\
			-2
		\end{bmatrix}$ & $\begin{bmatrix}
			-2\\
			-1\\
			0\\
			-3\\
			-2
		\end{bmatrix}$ & $\frac{13}{2}$ & $\left\lbrace 3\right\rbrace$ & $\begin{bmatrix}
			0\\
			0\\
			-2\\
			0\\
			0
		\end{bmatrix}$ & & & & & & & \begin{tabular}{l}
			stop:\\
			all constraints satisfied
		\end{tabular}\\
		\midrule
		\multicolumn{13}{l}{Solution path 2: $z_{0} \rightarrow z^{\ast}$.}\\
		\midrule
		1 & $\begin{bmatrix}
			0\\
			-1\\
			12\\
			31\\
			24\\
			6
		\end{bmatrix}$ & $\begin{bmatrix}
			-14\\
			29\\
			42\\
			5\\
			4
		\end{bmatrix}$ & $\frac{97}{2}$ & $\emptyset$ & $\begin{bmatrix}
			0\\
			0\\
			0\\
			0\\
			0
		\end{bmatrix}$ & $3$ & $\begin{bmatrix}
			1\\
			0\\
			-6\\
			-14\\
			-12\\
			-4
		\end{bmatrix}$ & $-$ & $\infty$ & $2$ & $-$ & \begin{tabular}{l}
			full step:\\
			add constraint $3$
		\end{tabular}\\
		2 & $\begin{bmatrix}
			2\\
			-1\\
			0\\
			3\\
			0\\
			-2
		\end{bmatrix}$ & $\begin{bmatrix}
			-2\\
			-1\\
			0\\
			-3\\
			-2
		\end{bmatrix}$ & $\frac{13}{2}$ & $\left\lbrace 3\right\rbrace$ & $\begin{bmatrix}
			0\\
			0\\
			-2\\
			0\\
			0
		\end{bmatrix}$ & & & & & & & \begin{tabular}{l}
			stop:\\
			all constraints satisfied
		\end{tabular}\\
		\midrule
		\multicolumn{13}{l}{Solution path 3: $z_{0} \rightarrow z_{1} \rightarrow z_{2} \rightarrow z_{2} \rightarrow z_{3} \rightarrow z_{4} \rightarrow z_{5} \rightarrow z^{\ast}$.}\\
		\midrule
		1 & $\begin{bmatrix}
			0\\
			-1\\
			12\\
			31\\
			24\\
			6
		\end{bmatrix}$ & $\begin{bmatrix}
			-14\\
			29\\
			42\\
			5\\
			4
		\end{bmatrix}$ & $\frac{97}{2}$ & $\emptyset$ & $\begin{bmatrix}
			0\\
			0\\
			0\\
			0\\
			0
		\end{bmatrix}$ & $4$ & $\begin{bmatrix}
			0\\
			0\\
			-1\\
			-2\\
			-2\\
			-1
		\end{bmatrix}$ & $-$ & $\infty$ & $5$ & $-$ & \begin{tabular}{l}
			full step:\\
			add constraint $4$
		\end{tabular}\\
		2 & $\begin{bmatrix}
			0\\
			-1\\
			7\\
			21\\
			14\\
			1
		\end{bmatrix}$ & $\begin{bmatrix}
			-9\\
			14\\
			22\\
			0\\
			4
		\end{bmatrix}$ & $36$ & $\left\lbrace 4\right\rbrace$ & $\begin{bmatrix}
			0\\
			0\\
			0\\
			-5\\
			0
		\end{bmatrix}$ & $5$ & $\begin{bmatrix}
			1\\
			0\\
			-2\\
			-6\\
			-3\\
			0
		\end{bmatrix}$ & $\begin{bmatrix}
			0
		\end{bmatrix}$ & $\infty$ & $\frac{4}{3}$ & $-$ & \begin{tabular}{l}
			full step:\\
			add constraint $5$
		\end{tabular}\\
		3 & $\begin{bmatrix}
			\frac{4}{3}\\
			-1\\
			\frac{13}{3}\\
			13\\
			10\\
			1
		\end{bmatrix}$ & $\begin{bmatrix}
			-9\\
			10\\
			18\\
			0\\
			0
		\end{bmatrix}$ & $\frac{100}{3}$ & $\left\lbrace 4, 5\right\rbrace $ & $\begin{bmatrix}
			0\\
			0\\
			0\\
			-5\\
			-\frac{4}{3}
		\end{bmatrix}$ & $2$ & $\begin{bmatrix}
			0\\
			0\\
			0\\
			0\\
			0\\
			0
		\end{bmatrix}$ & $\begin{bmatrix}
			-3\\ -1
		\end{bmatrix}$ & $\frac{4}{3}$ & $\infty$ & $5$ & \begin{tabular}{l}
			partial step:\\
			drop constraint $5$
		\end{tabular}\\
		4 & $\begin{bmatrix}
			\frac{4}{3}\\  
			-1\\
			\frac{13}{3}\\
			13\\
			10\\
			1
		\end{bmatrix}$ & $\begin{bmatrix}
			-9\\ 10\\ 18\\ 0\\ 0
		\end{bmatrix}$ & $\frac{100}{3}$ & $\left\lbrace 4\right\rbrace$ & $\begin{bmatrix}
			0\\
			-\frac{4}{3}\\
			0\\
			-1\\
			0
		\end{bmatrix}$ & $2$ & $\begin{bmatrix}
			1\\ 0\\ -2\\ -6\\ -3\\ 0
		\end{bmatrix}$ & $\begin{bmatrix}
			-3
		\end{bmatrix}$ & $\frac{1}{3}$ & $\frac{10}{3}$ & $4$ & \begin{tabular}{l}
			partial step:\\
			drop constraint $4$
		\end{tabular}\\
		5 & $\begin{bmatrix}
			\frac{5}{3}\\ -1\\ \frac{11}{3}\\ 11\\ 9\\ 1
		\end{bmatrix}$ & $\begin{bmatrix}
			-9\\ 9\\ 17\\ 0\\ -1
		\end{bmatrix}$ & $\frac{191}{6}$ & $\emptyset$ & $\begin{bmatrix}
			0\\
			-\frac{5}{3}\\
			0\\
			0\\
			0
		\end{bmatrix}$ & $2$ & $\begin{bmatrix}
			1\\ 0\\ -5\\ -12\\ -9\\ -3
		\end{bmatrix}$ & $-$ & $\infty$ & $\frac{3}{4}$ & $-$ & \begin{tabular}{l}
			full step:\\
			add constraint $2$
		\end{tabular}\\
		6 & $\begin{bmatrix}
			\frac{29}{12}\\  
			-1\\
			-\frac{1}{12}\\
			2\\
			\frac{9}{4}\\
			-\frac{5}{4}
		\end{bmatrix}$ & $\begin{bmatrix}
			-\frac{27}{4}\\
			0\\
			\frac{23}{4}\\
			-\frac{9}{4}\\
			-\frac{13}{4}
		\end{bmatrix}$ & $\frac{323}{24}$ & $\left\lbrace 2\right\rbrace$ & $\begin{bmatrix}
			0\\
			-\frac{29}{12}\\
			0\\
			0\\
			0
		\end{bmatrix}$ & $3$ & $\begin{bmatrix}
			-\frac{1}{4}\\
			0\\
			\frac{1}{4}\\
			1\\
			-\frac{3}{4}\\
			-\frac{1}{4}
		\end{bmatrix}$ & $\begin{bmatrix}
			-\frac{5}{4}
		\end{bmatrix}$ & $\frac{29}{15}$ & $\frac{23}{9}$ & $2$ & \begin{tabular}{l}
			partial step:\\
			drop constraint $2$
		\end{tabular}\\
		7 & $\begin{bmatrix}
			\frac{29}{15}\\
			-1\\
			\frac{2}{5}\\
			\frac{59}{15}\\
			\frac{4}{5}\\
			-\frac{26}{15}
		\end{bmatrix}$ & $\begin{bmatrix}
			-\frac{12}{5}\\
			0\\
			\frac{7}{5}\\
			-\frac{41}{15}\\
			-\frac{9}{5}
		\end{bmatrix}$ & $\frac{694}{75}$ & $\emptyset$ & $\begin{bmatrix}
			0\\
			0\\
			-\frac{29}{15}\\
			0\\
			0
		\end{bmatrix}$ & $3$ & $\begin{bmatrix}
			1\\
			0\\
			-6\\
			-14\\
			-12\\
			-4
		\end{bmatrix}$ & $-$ & $\infty$ & $\frac{1}{15}$ & $-$ & \begin{tabular}{l}
			full step:\\
			add constraint $3$
		\end{tabular}\\
		8 & $\begin{bmatrix}
			2\\
			-1\\
			0\\
			3\\
			0\\
			-2
		\end{bmatrix}$ & $\begin{bmatrix}
			-2\\
			-1\\
			0\\
			-3\\
			-2
		\end{bmatrix}$ & $\frac{13}{2}$ & $\left\lbrace 3\right\rbrace$ & $\begin{bmatrix}
			0\\
			0\\
			-2\\
			0\\
			0
		\end{bmatrix}$ & & & & & & & \begin{tabular}{l}
			stop:\\
			all constraints satisfied
		\end{tabular}\\
		\bottomrule
	\end{longtable}

	Example 2:
	\begin{align}
		\left\lbrace
		\begin{aligned}
			\min_{x\in\mathbb{R}^{n_{x}}}\max_{y\in\mathbb{R}^{n_{y}}}\quad& f(x, y) = \frac{1}{2}x^{T}G_{11}x + x^{T}G_{12}y + \frac{1}{2}y^{T}G_{22}y + c_{x}^{T}x + c_{y}^{T}y,\\
			{\rm s.t.}\quad & s(x, y) = Ax + By + h \leq 0.
		\end{aligned}\right.
	\end{align}
	where
	\begin{align*}
		G_{11} & = \begin{bmatrix}
			-1 & -1\\
			-1 & 0
		\end{bmatrix}, \quad G_{12} = \begin{bmatrix}
			2 &	-1 & 0 & 0\\
			1 &	0 &	-1 & 1
		\end{bmatrix}, \quad G_{22} = \begin{bmatrix}
			-3 & 1 & 0 & 1\\
			1 & -1 & 1 & -1\\
			0 & 1 & -2 & 2\\
			1 & -1 & 2 & -4
		\end{bmatrix},\\
		A & = \begin{bmatrix}
			0 & -1\\
			1 & 0\\
			1 & 1\\
			1 & 0
		\end{bmatrix},\quad B = \begin{bmatrix}
			0 & 1 & -2 & 1\\
			2 & 0 & 0 & 1\\
			2 & -1 & 2 & 2\\
			2 & 1 & 0 & 2
		\end{bmatrix},\\
		c_{x}^{T} & \begin{bmatrix}
			3 & 1
		\end{bmatrix}, \quad c_{y}^{T} = \begin{bmatrix}
			1 & 5 & -6 & 7
		\end{bmatrix},\quad h^{T} = \begin{bmatrix}
			-7 & -5 & 0 & 9
		\end{bmatrix}.
	\end{align*}
	It is straightforward to verify that
	\begin{align*}
		DG^{-1}D^{T} =
		\begin{bmatrix}
			-3 & 3 & 8 & 6\\
			3 & -12 & -21 & -27\\
			8 & -21 & -41 & -45\\
			6 & -27 & -45 & -52
		\end{bmatrix} \npreceq 0,
	\end{align*}
	and this sample satisfies Assumption~\ref{assumption}. 
	We obtain
	\begin{align*}
	z_{0}^{T} = -c^{T}G^{-1} =
	\begin{bmatrix}
		1 & -2 & 11 & 26 & 17 & 6
	\end{bmatrix},
	\end{align*}
	which yields the initial S-pair $(z_{0}, \emptyset)$. 
	Table~\ref{solution path2} summarizes several principal solution paths starting from $z_{0}$ for Example~2. 
	Solution paths~1–2 indicate that this sample admits two distinct local minimax points, denoted by $z^{\ast}$ and $\bar{z}$. 
	Paths~1, 3, and 5 converge to $z^{\ast}$, while paths~2, 4, and 6 converge to $\bar{z}$. 
	Unlike paths~1–5, in path~6 the objective value increases in iteration~3, where $f(z_{2}) < f(z_{5})$. 
	For path~6, the sequence of S-pair iterations is
	\begin{align*}
	(z_{0}, \emptyset)
	\rightarrow (z_{1}, \{3\})
	\rightarrow (z_{2}, \{3, 1\})
	\rightarrow (\bar{z}, \{4\}).
	\end{align*}
	Although the objective function increases in iteration~3, we have $f(z_{2}) > f(\bar{z})$, which is consistent with Theorem~\ref{theorem >=}. 
	Overall, the objective function decreases in every S-pair iteration. 
	Moreover, identical S-pairs are obtained for the same active set, such as iteration~3 in path~3 and iteration~5 in path~5.
	
	\begin{longtable}{>{\centering\arraybackslash}p{1.2cm}  
			>{\centering\arraybackslash}p{1.2cm}  
			>{\centering\arraybackslash}p{1.2cm}    
			>{\centering\arraybackslash}p{0.3cm}  
			>{\centering\arraybackslash}p{0.8cm}  
			>{\centering\arraybackslash}p{1.1cm}  
			>{\centering\arraybackslash}p{0.3cm}  
			>{\centering\arraybackslash}p{1.1cm}  
			>{\centering\arraybackslash}p{0.8cm}  
			>{\centering\arraybackslash}p{0.3cm}  
			>{\centering\arraybackslash}p{0.3cm}  
			>{\centering\arraybackslash}p{0.3cm}  
			>{\raggedright\arraybackslash}p{3.6cm} 
		}
		\caption{Solution paths of the dual algorithm for Example2}\label{solution path2}\\
		\toprule
		Iteration & \multicolumn{5}{l}{At the start of iteration} &\multicolumn{6}{l}{Computed during iteration} & Remarks \\
		\cmidrule(lr){2-6} \cmidrule(lr){7-12}
		& $z$ & $s$ & $f$ & $\alpha$ & $u$ & $p$ & $d$ & $r$ & $t_{1}$ & $t_{2}$ & $k$ &  \\
		\midrule
		\multicolumn{13}{l}{Solution path 1: $z_{0} \rightarrow z^{\ast}$.}\\
		\midrule
		1 & $\begin{bmatrix}
			1\\
			-2\\
			11\\
			26\\
			17\\
			6
		\end{bmatrix}$ & $\begin{bmatrix}
			-7\\
			24\\
			41\\
			52
		\end{bmatrix}$ & $41$ & $\emptyset$ & $\begin{bmatrix}
			0\\
			0\\
			0\\
			0
		\end{bmatrix}$ & $2$ & $\begin{bmatrix}
			1\\
			0\\
			-5\\
			-12\\
			-9\\
			-3
		\end{bmatrix}$ & $-$ & $\infty$ & $2$ & $-$ & \begin{tabular}{l}
			full step:\\
			add constraint $2$
		\end{tabular}\\
		2 & $\begin{bmatrix}
			3\\ -2\\ 1\\ 2\\ -1\\ 0
		\end{bmatrix}$ & $\begin{bmatrix}
			-1\\ 0\\ -1\\ -2
		\end{bmatrix}$ & $17$ & $\left\lbrace 2\right\rbrace$ & $\begin{bmatrix}
			0\\ -2\\ 0\\ 0
		\end{bmatrix}$ & & & & & & & \begin{tabular}{l}
		stop:\\
		all constraints satisfied
		\end{tabular}\\
		\midrule
		\multicolumn{13}{l}{Solution path 2: $z_{0} \rightarrow \bar{z}$.}\\
		\midrule
		1 & $\begin{bmatrix}
			1\\
			-2\\
			11\\
			26\\
			17\\
			6
		\end{bmatrix}$ & $\begin{bmatrix}
			-7\\
			24\\
			41\\
			52
		\end{bmatrix}$ & $41$ & $\emptyset$ & $\begin{bmatrix}
			0\\
			0\\
			0\\
			0
		\end{bmatrix}$ & $4$ & $\begin{bmatrix}
			-1\\ -1\\ -10\\ -19\\ -15\\ -6
		\end{bmatrix}$ & $-$ & $\infty$ & $1$ & $-$ & \begin{tabular}{l}
			full step:\\
			add constraint $4$
		\end{tabular}\\
		2 & $\begin{bmatrix}
			0\\ -3\\ 1\\ 7\\ 2\\ 0
		\end{bmatrix}$ & $\begin{bmatrix}
			-1\\ -3\\ -4\\ 0
		\end{bmatrix}$ & $15$ & $\left\lbrace 4\right\rbrace$ & $\begin{bmatrix}
			0\\ 0\\ 0\\ -1
		\end{bmatrix}$ & & & & & & & \begin{tabular}{l}
		stop:\\
		all constraints satisfied
		\end{tabular}\\
		\midrule
		\multicolumn{13}{l}{Solution path 3: $z_{0} \rightarrow z_{1} \rightarrow z_{4} \rightarrow z^{\ast}$.}\\
		\midrule
		1 & $\begin{bmatrix}
			1\\
			-2\\
			11\\
			26\\
			17\\
			6
		\end{bmatrix}$ & $\begin{bmatrix}
			-7\\
			24\\
			41\\
			52
		\end{bmatrix}$ & $41$ & $\emptyset$ & $\begin{bmatrix}
			0\\
			0\\
			0\\
			0
		\end{bmatrix}$ & $3$ & $\begin{bmatrix}
			1\\ 0\\ -8\\ -18\\ -16\\ -6
		\end{bmatrix}$ & $-$ & $\infty$ & $1$ & $-$ & \begin{tabular}{l}
			full step:\\
			add constraint $3$
		\end{tabular}\\
		2 & $\begin{bmatrix}
			2\\ -2\\ 3\\ 8\\ 1\\ 0
		\end{bmatrix}$ & $\begin{bmatrix}
			1\\ 3\\ 0\\ 7
		\end{bmatrix}$ & $\frac{41}{2}$ & $\left\lbrace 3\right\rbrace$ & $\begin{bmatrix}
			0\\ 0\\ -1\\ 0
		\end{bmatrix}$ & $2$ & $\begin{bmatrix}
			\frac{20}{41}\\ 0\\ -\frac{37}{41}\\ -\frac{114}{41}\\ -\frac{33}{41}\\ \frac{3}{41}
		\end{bmatrix}$ & $\begin{bmatrix}
			-\frac{21}{41}
		\end{bmatrix}$ & $\frac{41}{21}$ & $\frac{41}{17}$ & $3$ & \begin{tabular}{l}
			partial step:\\
			drop constraint $3$
		\end{tabular}\\
		3 & $\begin{bmatrix}
			\frac{62}{21}\\ -2\\ \frac{26}{21}\\ \frac{18}{7}\\ -\frac{4}{7}\\ \frac{1}{7}
		\end{bmatrix}$ & $\begin{bmatrix}
			-\frac{8}{7}\\ \frac{4}{7}\\ 0\\ -\frac{5}{7}
		\end{bmatrix}$ & $\frac{2665}{147}$ & $\emptyset$ & $\begin{bmatrix}
			0\\ -\frac{41}{21}\\ 0\\ 0
		\end{bmatrix}$ & $2$ & $\begin{bmatrix}
			1\\ 0\\ -5\\ -12\\ -9\\ -3
		\end{bmatrix}$ & $-$ & $\infty$ & $\frac{1}{21}$ & $-$ & \begin{tabular}{l}
			full step:\\
			add constraint $2$
		\end{tabular}\\
		4 & $\begin{bmatrix}
			3\\ -2\\ 1\\ 2\\ -1\\ 0
		\end{bmatrix}$ & $\begin{bmatrix}
			-1\\ 0\\ -1\\ -2
		\end{bmatrix}$ & $17$ & $\left\lbrace 2\right\rbrace$ & $\begin{bmatrix}
			0\\ -2\\ 0\\ 0
		\end{bmatrix}$ & & & & & & & \begin{tabular}{l}
			stop:\\
			all constraints satisfied
		\end{tabular}\\
		\midrule
		\multicolumn{13}{l}{Solution path 4: $z_{0} \rightarrow z_{1} \rightarrow z_{6} \rightarrow \bar{z}$.}\\
		\midrule
		1 & $\begin{bmatrix}
			1\\
			-2\\
			11\\
			26\\
			17\\
			6
		\end{bmatrix}$ & $\begin{bmatrix}
			-7\\
			24\\
			41\\
			52
		\end{bmatrix}$ & $41$ & $\emptyset$ & $\begin{bmatrix}
			0\\
			0\\
			0\\
			0
		\end{bmatrix}$ & $3$ & $\begin{bmatrix}
			1\\ 0\\ -8\\ -18\\ -16\\ -6
		\end{bmatrix}$ & $-$ & $\infty$ & $1$ & $-$ & \begin{tabular}{l}
			full step:\\
			add constraint $3$
		\end{tabular}\\
		2 & $\begin{bmatrix}
			2\\ -2\\ 3\\ 8\\ 1\\ 0
		\end{bmatrix}$ & $\begin{bmatrix}
			1\\ 3\\ 0\\ 7
		\end{bmatrix}$ & $\frac{41}{2}$ & $\left\lbrace 3\right\rbrace$ & $\begin{bmatrix}
			0\\ 0\\ -1\\ 0
		\end{bmatrix}$ & $4$ & $\begin{bmatrix}
			-\frac{86}{41}\\ -1\\ -\frac{50}{41}\\ \frac{31}{41}\\ \frac{105}{41}\\ \frac{24}{41}
		\end{bmatrix}$ & $\begin{bmatrix}
			-\frac{45}{41}
		\end{bmatrix}$ & $\frac{41}{45}$ & $\frac{287}{107}$ & $3$ & \begin{tabular}{l}
			partial step:\\
			drop constraint $3$
		\end{tabular}\\
		3 & $\begin{bmatrix}
			\frac{4}{45}\\ -\frac{131}{45}\\ \frac{17}{9}\\ \frac{391}{45}\\ \frac{10}{3}\\ \frac{8}{15}
		\end{bmatrix}$ & $\begin{bmatrix}
			-\frac{23}{15}\\ -\frac{3}{5}\\ 0\\ \frac{208}{45}
		\end{bmatrix}$ & $\frac{13184}{679}$ & $\emptyset$ & $\begin{bmatrix}
			0\\ 0\\ 0\\ -\frac{41}{45}
		\end{bmatrix}$ & $4$ & $\begin{bmatrix}
			-1\\ -1\\ -10\\ -19\\ -15\\ -6
		\end{bmatrix}$ & $-$ & $\infty$ & $\frac{4}{45}$ & $-$ & \begin{tabular}{l}
			full step:\\
			add constraint $4$
		\end{tabular}\\
		4 & $\begin{bmatrix}
			0\\ -3\\ 1\\ 7\\ 2\\ 0
		\end{bmatrix}$ & $\begin{bmatrix}
			-1\\ -3\\ -4\\ 0
		\end{bmatrix}$ & $15$ & $\left\lbrace 4\right\rbrace$ & $\begin{bmatrix}
			0\\ 0\\ 0\\ -1
		\end{bmatrix}$ & & & & & & & \begin{tabular}{l}
			stop:\\
			all constraints satisfied
		\end{tabular}\\
		\midrule
		\multicolumn{13}{l}{Solution path 5: $z_{0} \rightarrow z_{1} \rightarrow z_{2} \rightarrow z_{3} \rightarrow z_{4} \rightarrow z^{\ast}$.}\\
		\midrule
		1 & $\begin{bmatrix}
			1\\
			-2\\
			11\\
			26\\
			17\\
			6
		\end{bmatrix}$ & $\begin{bmatrix}
			-7\\
			24\\
			41\\
			52
		\end{bmatrix}$ & $41$ & $\emptyset$ & $\begin{bmatrix}
			0\\
			0\\
			0\\
			0
		\end{bmatrix}$ & $3$ & $\begin{bmatrix}
			1\\ 0\\ -8\\ -18\\ -16\\ -6
		\end{bmatrix}$ & $-$ & $\infty$ & $1$ & $-$ & \begin{tabular}{l}
			full step:\\
			add constraint $3$
		\end{tabular}\\
		2 & $\begin{bmatrix}
			2\\ -2\\ 3\\ 8\\ 1\\ 0
		\end{bmatrix}$ & $\begin{bmatrix}
			1\\ 3\\ 0\\ 7
		\end{bmatrix}$ & $\frac{41}{2}$ & $\left\lbrace 3\right\rbrace$ & $\begin{bmatrix}
			0\\ 0\\ -1\\ 0
		\end{bmatrix}$ & $1$ & $\begin{bmatrix}
			\frac{8}{41}\\ 0\\ -\frac{23}{41}\\ -\frac{62}{41}\\ -\frac{5}{41}\\ -\frac{7}{41}
		\end{bmatrix}$ & $\begin{bmatrix}
			\frac{8}{41}
		\end{bmatrix}$ & $\infty$ & $\frac{59}{41}$ & $-$ & \begin{tabular}{l}
			full step:\\
			add constraint $1$
		\end{tabular}\\
		3 & $\begin{bmatrix}
			\frac{126}{59}\\ -2\\ \frac{154}{59}\\ \frac{410}{59}\\ \frac{54}{59}\\ -\frac{7}{59}
		\end{bmatrix}$ & $\begin{bmatrix}
			0\\ \frac{132}{59}\\ 0\\ \frac{299}{59}
		\end{bmatrix}$ & $\frac{1189}{59}$ & $\left\lbrace 3, 1\right\rbrace$ & $\begin{bmatrix}
			-\frac{41}{59}\\ 0\\ -\frac{67}{59}\\ 0
		\end{bmatrix}$ & $2$ & $\begin{bmatrix}
			\frac{20}{59}\\ 0\\ -\frac{28}{59}\\ -\frac{96}{59}\\ -\frac{42}{59}\\ \frac{12}{59}
		\end{bmatrix}$ & $\begin{bmatrix}
			-\frac{39}{59}\\ -\frac{45}{59}
		\end{bmatrix}$ & $\frac{41}{45}$ & $\frac{11}{2}$ & $1$ & \begin{tabular}{l}
			partial step:\\
			drop constraint $1$
		\end{tabular}\\
		4 & $\begin{bmatrix}
				\frac{22}{9}\\ -2\\ \frac{98}{45}\\ \frac{82}{15}\\ \frac{4}{15}\\ \frac{1}{15}
		\end{bmatrix}$ & $\begin{bmatrix}
			0\\ \frac{28}{15}\\ 0\\ \frac{17}{5}
		\end{bmatrix}$ & $\frac{3677}{184}$ & $\left\lbrace 3\right\rbrace$ & $\begin{bmatrix}
			0\\ -\frac{41}{45}\\ -\frac{8}{45}\\ 0
		\end{bmatrix}$ & $2$ & $\begin{bmatrix}
			\frac{20}{41}\\ 0\\ -\frac{37}{41}\\ -\frac{114}{41}\\ -\frac{33}{41}\\ \frac{3}{41}
		\end{bmatrix}$ & $\begin{bmatrix}
			-\frac{21}{41}
		\end{bmatrix}$ & $\frac{328}{315}$ & $\frac{1148}{765}$ & $3$ & \begin{tabular}{l}
			partial step:\\
			drop constraint $3$
		\end{tabular}\\
		5 & $\begin{bmatrix}
				\frac{62}{21}\\ -2\\ \frac{26}{21}\\ \frac{18}{7}\\ -\frac{4}{7}\\ \frac{1}{7}
		\end{bmatrix}$ & $\begin{bmatrix}
			-\frac{8}{7}\\ \frac{4}{7}\\ 0\\ -\frac{5}{7}
		\end{bmatrix}$ & $\frac{2665}{147}$ & $\emptyset$ & $\begin{bmatrix}
			0\\ -\frac{41}{21}\\ 0\\ 0
		\end{bmatrix}$ & $2$ & $\begin{bmatrix}
			1\\ 0\\ -5\\ -12\\ -9\\ -3
		\end{bmatrix}$ & $-$ & $\infty$ & $\frac{1}{21}$ & $-$ & \begin{tabular}{l}
		full step:\\
		add constraint $2$
		\end{tabular}\\
		6 & $\begin{bmatrix}
		3\\ -2\\ 1\\ 2\\ -1\\ 0
		\end{bmatrix}$ & $\begin{bmatrix}
		-1\\ 0\\ -1\\ -2
		\end{bmatrix}$ & $17$ & $\left\lbrace 2\right\rbrace$ & $\begin{bmatrix}
		0\\ -2\\ 0\\ 0
		\end{bmatrix}$ & & & & & & & \begin{tabular}{l}
		stop:\\
		all constraints satisfied
		\end{tabular}\\
		\midrule
		\multicolumn{13}{l}{Solution path 6: $z_{0} \rightarrow z_{1} \rightarrow z_{2} \rightarrow z_{5} \rightarrow z_{6} \rightarrow \bar{z}$.}\\
		\midrule
		1 & $\begin{bmatrix}
			1\\
			-2\\
			11\\
			26\\
			17\\
			6
		\end{bmatrix}$ & $\begin{bmatrix}
			-7\\
			24\\
			41\\
			52
		\end{bmatrix}$ & $41$ & $\emptyset$ & $\begin{bmatrix}
			0\\
			0\\
			0\\
			0
		\end{bmatrix}$ & $3$ & $\begin{bmatrix}
			1\\ 0\\ -8\\ -18\\ -16\\ -6
		\end{bmatrix}$ & $-$ & $\infty$ & $1$ & $-$ & \begin{tabular}{l}
			full step:\\
			add constraint $3$
		\end{tabular}\\
		2 & $\begin{bmatrix}
			2\\ -2\\ 3\\ 8\\ 1\\ 0
		\end{bmatrix}$ & $\begin{bmatrix}
			1\\ 3\\ 0\\ 7
		\end{bmatrix}$ & $\frac{41}{2}$ & $\left\lbrace 3\right\rbrace$ & $\begin{bmatrix}
			0\\ 0\\ -1\\ 0
		\end{bmatrix}$ & $1$ & $\begin{bmatrix}
			\frac{8}{41}\\ 0\\ -\frac{23}{41}\\ -\frac{62}{41}\\ -\frac{5}{41}\\ -\frac{7}{41}
		\end{bmatrix}$ & $\begin{bmatrix}
			\frac{8}{41}
		\end{bmatrix}$ & $\infty$ & $\frac{59}{41}$ & $-$ & \begin{tabular}{l}
			full step:\\
			add constraint $1$
		\end{tabular}\\
		3 & $\begin{bmatrix}
				\frac{126}{59}\\ -2\\ \frac{154}{59}\\ \frac{410}{59}\\ \frac{54}{59}\\ -\frac{7}{59}
		\end{bmatrix}$ & $\begin{bmatrix}
			0\\ \frac{132}{59}\\ 0\\ \frac{299}{59}
		\end{bmatrix}$ & $\frac{1189}{59}$ & $\left\lbrace 3, 1\right\rbrace$ & $\begin{bmatrix}
			-\frac{41}{59}\\ 0\\ -\frac{67}{59}\\ 0
		\end{bmatrix}$ & $4$ & $\begin{bmatrix}
			-\frac{146}{59}\\ -1\\ -\frac{8}{59}\\ \frac{217}{59}\\ \frac{165}{59}\\ \frac{54}{59}
		\end{bmatrix}$ & $\begin{bmatrix}
			-\frac{87}{59}\\ -\frac{114}{59}
		\end{bmatrix}$ & $\frac{41}{114}$ & $\infty$ & $1$ & \begin{tabular}{l}
			partial step:\\
			drop constraint $1$
		\end{tabular}\\
		4 & $\begin{bmatrix}
				\frac{71}{57}\\ -\frac{269}{114}\\ \frac{146}{57}\\ \frac{943}{114}\\ \frac{73}{38}\\ \frac{4}{19}
		\end{bmatrix}$ & $\begin{bmatrix}
			0\\ \frac{30}{19}\\ 0\\ \frac{691}{114}
		\end{bmatrix}$ & $\frac{3192}{157}$ & $\left\lbrace 3\right\rbrace$ & $\begin{bmatrix}
			0\\ 0\\ -\frac{23}{38}\\ -\frac{41}{114}
		\end{bmatrix}$ & $4$ & $\begin{bmatrix}
			-\frac{86}{41}\\ -1\\ -\frac{50}{41}\\ \frac{31}{41}\\ \frac{105}{41}\\ \frac{24}{41}
		\end{bmatrix}$ & $\begin{bmatrix}
			-\frac{45}{41}
		\end{bmatrix}$ & $\frac{943}{1710}$ & $\frac{5695}{2452}$ & $3$ & \begin{tabular}{l}
			partial step:\\
			drop constraint $3$
		\end{tabular}\\
		5 & $\begin{bmatrix}
				\frac{4}{45}\\ -\frac{131}{45}\\ \frac{17}{9}\\ \frac{391}{45}\\ \frac{10}{3}\\ \frac{8}{15}
		\end{bmatrix}$ & $\begin{bmatrix}
			-\frac{23}{15}\\ -\frac{3}{5}\\ 0\\ \frac{208}{45}
		\end{bmatrix}$ & $\frac{13184}{679}$ & $\emptyset$ & $\begin{bmatrix}
			0\\ 0\\ 0\\ -\frac{41}{45}
		\end{bmatrix}$ & $4$ & $\begin{bmatrix}
			-1\\ -1\\ -10\\ -19\\ -15\\ -6
		\end{bmatrix}$ & $-$ & $\infty$ & $\frac{4}{45}$ & $-$ & \begin{tabular}{l}
			full step:\\
			add constraint $4$
		\end{tabular}\\
		6 & $\begin{bmatrix}
			0\\ -3\\ 1\\ 7\\ 2\\ 0
		\end{bmatrix}$ & $\begin{bmatrix}
			-1\\ -3\\ -4\\ 0
		\end{bmatrix}$ & $15$ & $\left\lbrace 4\right\rbrace$ & $\begin{bmatrix}
			0\\ 0\\ 0\\ -1
		\end{bmatrix}$ & & & & & & & \begin{tabular}{l}
			stop:\\
			all constraints satisfied
		\end{tabular}\\
		\bottomrule
	\end{longtable}
	
	\section{Conclusion}\label{section7}
	
	In this paper, we investigate a minimax quadratic programming problem with coupled inequality constraints. 
	Motivated by the augmented Lagrangian method for equality constrained minimax optimization problems proposed by Dai and Zhang~\cite{daiRateConvergenceAugmented2024}, 
	we focus on extending dual active set methods to inequality constrained settings by leveraging the duality theorem introduced by Tsaknakis et al.~\cite{tsaknakisMinimaxProblemsCoupled2023}. 
	Under Assumption~\ref{assumption}, we establish that the S-pair does not repeat and that the proposed dual algorithm terminates in a finite number of iterations, 
	ensured by the monotonic decrease of the objective function value. 
	We further develop a numerically stable implementation of the algorithm using Cholesky factorization and Givens rotations, 
	and validate its performance through numerical experiments including randomly generated minimax quadratic programs and an adversarial attack on a mean-covariance portfolio model. Additionally, we provide illustrative examples that detail the dual algorithm’s iterative behavior.
	
	Several directions for future research remain open. 
	First, although the proposed method ensures finite termination under Assumption~\ref{assumption}, 
	an effective strategy for selecting the violated constraint may substantially enhance computational efficiency and applicability to broader problem classes. 
	Second, it is of interest to extend this method to sequential minimax quadratic programming, 
	drawing inspiration from sequential quadratic programming (SQP) methods in nonlinear optimization.
	
	\bibliographystyle{unsrt}
	
\end{document}